\renewcommand{\pmod}[1]{\left(\mathsf{mod}\;#1\right)}
\DeclareMathOperator{\p}{\mathsf{P}}
\DeclareMathOperator{\F}{\mathsf{F}}
\DeclareMathOperator{\E}{\mathsf{E}}
\DeclareMathOperator{\pr}{\mathsf{p}}
\let\ker\relax\DeclareMathOperator{\ker}{\mathsf{ker}}
\let\dim\relax\DeclareMathOperator{\dim}{\mathsf{dim}}
\DeclareMathOperator{\rad}{\mathsf{Rad}}
\definecolor{red}{rgb}{1,0,0}
\def\equationautorefname~#1\null{(#1)\null}
\def\itemautorefname~#1\null{(#1)\null}
\def\sectionautorefname~#1\null{\S#1\null}
\newtheorem{theorem}{Theorem}  
\newaliascnt{proposition}{theorem}  
\newtheorem{proposition}[proposition]{Proposition}  
\newaliascnt{conjecture}{theorem}  
\newtheorem{conjecture}[conjecture]{Conjecture}  
\newaliascnt{corollary}{theorem}  
\newtheorem{corollary}[corollary]{Corollary}  
\newaliascnt{lemma}{theorem}  
\newtheorem{lemma}[lemma]{Lemma}  
\theoremstyle{definition}
\newaliascnt{definition}{theorem}  
\newtheorem{definition}[definition]{Definition}  
\newaliascnt{example}{theorem}  
\newtheorem{example}[example]{Example}  
\newaliascnt{remark}{theorem}  
\newtheorem{remark}[remark]{Remark}  
\title[Quiver Presentation Descent Hyperoctahedral]
{On the Quiver Presentation of the Descent Algebra
of the Hyperoctahedral Group}
\author{Marcus Bishop}
\address{Ruhr-Universit\"at Bochum\\Fakult\"at f\"ur Mathematik}
\email{marcus.bishop@rub.de}
\keywords{Descent algebra, hyperoctahedral, quiver, presentation}
\subjclass[2000]{Primary 16G20; Secondary 20F55}
\thanks{The author would like to acknowledge support from
the DFG-priority program SPP1489 \em Algorithmic and Experimental Methods in
Algebra, Geometry, and Number Theory.}
\begin{document}
\begin{abstract}
In a recent article we introduced a mechanism for producing a
presentation of the descent algebra of the
symmetric group as a quiver with relations, the mechanism
arising from a new construction of the
descent algebra as a homomorphic image of an
algebra of binary forests.
Here we extend the method to construct a similar presentation
of the descent algebra of the hyperoctahedral group,
providing a simple proof
of the known formula for the quiver of this algebra
and a straightforward method for calculating the relations.
\end{abstract}

\maketitle

\section{Introduction}\label{IntroductionSection}
Let $\left(W,S\right)$ be a finite Coxeter system and
let $k$ be a field of characteristic zero.
For all $J\subseteq S$
we denote the parabolic subgroup
$\left\langle J\right\rangle$ of $W$ by $W_J$
and the set of minimal length left coset representatives
of $W_J$ in $W$ by $X_J$. In 1976 Solomon proved that
the elements $x_J=\sum_{x\in X_J} x$ of the group algebra $kW$
for all $J\subseteq S$ satisfy
\begin{equation*}\label{SolomonTheorem}
x_Jx_K=\sum_{L\subseteq S}c_{JKL}x_L
\end{equation*}
for certain integers $c_{JKL}$ with $J,K,L\subseteq S$ \cite{solomon}. 
This implies that the linear span
$\left\langle x_J\mid J\subseteq S\right\rangle$
is a {\em subalgebra} of $kW$. This algebra is called the 
{\em descent algebra} of $W$ and is denoted by $\Sigma\left(W\right)$.
Thanks to the calculation of a complete
set of primitive orthogonal idempotents of $\Sigma\left(W\right)$ 
by Bergeron, Bergeron, Howlett, and Taylor \cite{idempotents}
the complete representation theory of $\Sigma\left(W\right)$ is known.
The simple $\Sigma\left(W\right)$-modules are
indexed by conjugacy classes of subsets of the generating set~$S$.
Furthermore $\Sigma\left(W\right)$ is a basic algebra and thereby
admits a presentation as a quiver with relations.

The aim of this paper is to calculate and study the quiver presentation
of $\Sigma\left(W\right)$ when $W$ is the Coxeter group of type~$B_n$,
also known as the {\em hyperoctahedral group}.
This is the symmetry group of a hypercube in $\mathbb{R}^n$.
We denote the hyperoctahedral group by $W_n$ in this article.

The quiver of $\Sigma\left(W_n\right)$ was calculated by Saliola
in 2008 \cite{Saliola} using methods entirely different than those of this article.
The starting point of Saliola's calculation is Bidigare's identification
\cite{Bidigare} of $\Sigma\left(W\right)$
with the invariant subalgebra of the
face semigroup algebra of the reflection arrangement of any
finite Coxeter group $W$.
Saliola shows that the quiver of the face semigroup algebra
is the Hasse diagram of the intersection lattice of the arrangement.
From this he derives the quiver of $\Sigma\left(W\right)$ 
by averaging over $W$-orbits when $W$ has type~$A$ or~$B$.

In contrast, the starting point of this article is the
construction of Pfeiffer, outlined in \autoref{GotzSummary} below,
which identifies $\Sigma\left(W\right)$ with a {\em quotient} rather
than a {\em subalgebra} of a simpler algebra. The simpler algebra in this case
has binary forests as its underlying set.
The quotient is taken by the kernel of the anti-homomorphism $\Delta$
described below, which has a straightforward description involving
dismantling and reassembling forests.

By viewing $\Sigma\left(W_n\right)$
as a quotient rather than a subalgebra, the relations of the quiver presentation
of $\Sigma\left(W_n\right)$, which are currently unknown,
are easily accessed as the elements of $\ker\Delta$.
This is the primary motivation for this work.
Indeed, this article closes with what will hopefully
be a major step toward the calculation of these relations,
namely, a conjectured generating set of the ideal of relations
in \autoref{MainConjecture}.

A further remark is that our approach leads to an
alternate proof of the quiver of $\Sigma\left(W_n\right)$
in \autoref{ExtQuiver}.
The hope is that the reformulation of the problem in terms of binary
forests will enhance our understanding of the process.
For example, since the map $\Delta$ is derived from the canonical
projection of trees onto the free Lie algebra
on $\mathbb{N}$, the relations of the presentation
derive in part from the Jacobi relation,
an observation that would have been difficult to deduce in the context of the
face semigroup algebra.

On a similar note, the quiver of the Mantaci-Reutenauer
algebra $\Sigma'\left(W_n\right)$ remains uncalculated.
The algebra $\Sigma'\left(W_n\right)$ is a generalization of the descent algebra
which contains both $\Sigma\left(W_n\right)$ and
$\Sigma\left(\mathfrak{S}_n\right)$ as subalgebras,
where $\mathfrak{S}_n$ denotes the symmetric group on $n$~letters.
In the direction of calculating the quiver of $\Sigma'\left(W_n\right)$
Hsiao has introduced an algebra \cite{Hsiao},
the semigroup algebra of ordered $\mathfrak{S}_2$-partitions,
in which $\Sigma'\left(W_n\right)$ appears as the invariant subalgebra,
analogous to the appearance of $\Sigma\left(W_n\right)$
as the invariant subalgebra of the face semigroup algebra.
Margolis and Steinberg have calculated
the quiver of the algebra of ordered $\mathfrak{S}_2$-partitions \cite{MargolisSteinberg}
and more generally, the quiver of the algebra of any rectangular monoid
\cite{MargolisSteinbergQuiverRectangular} using Hochschild-Mitchell cohomology
and the representation theory of maximal subgroups.

For any set $\Omega$ we denote the vector space
of $k$-linear combination of elements of $\Omega$ by $k\Omega$. When $\Omega$
is a monoid, the space $k\Omega$ becomes an algebra, its product induced
from the product in $\Omega$. The group algebra $kW$ mentioned above
is an example of this construction.
When $\Omega$ is a category rather than a monoid,
that is, when the products of certain
pairs of elements are undefined,
the space $k\Omega$ becomes an algebra
by taking the product of two elements of $\Omega$ to be zero in $k\Omega$
whenever that product is undefined in $\Omega$.
Viewing a quiver $Q$ as the set of paths in $Q$
the path algebra $kQ$ is an example of this construction. 

The main tool used in this article is
the construction from \cite{gotz} that we now review.
Let $\left(W,S\right)$ be any finite Coxeter system.
Let $\mathcal{A}$ be the set of chains of subsets of $S$.
These are tuples $\left(J_0,J_1,\ldots,J_l\right)$ where
$S\supseteq J_0\supseteq J_1\supseteq\cdots\supseteq J_l$
and $\left|J_i\setminus J_{i+1}\right|=1$ for all $0\le i\le l-1$.
Since the concatenation of two chains might fail to be another chain,
concatenation yields only a partial product making
$\mathcal{A}$ into a category rather than a monoid.

The free monoid $S^\ast$ acts on $\mathcal{A}$ by
\begin{equation}\label{ActionDefinition}
\left(J_0,J_1,\ldots, J_l\right).s
=\left(J_0^\omega,J_1^\omega,\ldots,J_l^\omega\right)
\end{equation}
for $s\in S$ where $\omega=w_{J_0}w_{J_0\cup\left\{s\right\}}$
where $w_K$ denotes the longest element in the parabolic subgroup
$W_K=\left\langle K\right\rangle$ of $W$ for $K\subseteq S$.
Here we denote the conjugate
$x^{-1}\Omega x$ of a set $\Omega\subseteq W$ by $\Omega^x$
for $x\in W$.

We define a difference operator $\delta$ on the algebra $k\mathcal{A}$
as follows. If $a=\left(J_0,J_1,\ldots,J_l\right)$ then we put
$\delta\left(a\right)=a$ if $l=0$ or
$\delta\left(a\right)=b-b.s$ if $l>0$
where $b=\left(J_1,\ldots,J_l\right)$
and $s\in J_0\setminus J_1$.
Repeating $\delta$ as many times
as possible determines another difference operator $\Delta$
defined by $\Delta\left(a\right)=\delta^l\left(a\right)$
for $a$ as above.
So $\Delta$ is a linear map $k\mathcal{A}\to k 2^S$
where $2^S$ denotes the power set of $S$.

Like a group action, it can be shown that the monoid action
\autoref{ActionDefinition} partitions
$\mathcal{A}$ into orbits.
Identifying an $S^\ast$-orbit with
the sum of its elements in $k\mathcal{A}$ it can
also be shown that if
$\mathcal{X}\subseteq k\mathcal{A}$
denotes the set of orbits in $\mathcal{A}$
then $k\mathcal{X}$ is a subalgebra of $k\mathcal{A}$.
The main tool used in this article is the following
theorem extracted from \cite{gotz} which constructs a presentation of
the descent algebra of $W$.

\begin{theorem}[Pfeiffer]\label{GotzSummary}
There exist subsets $\Lambda,\mathcal{E}\subseteq\mathcal{X}$ such that
\begin{itemize}
\item $\Lambda$ is a complete set of pairwise orthogonal primitive idempotents of $k \mathcal{X}$,
\item $\lambda\left(k\mathcal{X}\right)\mu\cap\mathcal{X}$
is a basis of the subspace
$\lambda\left(k\mathcal{X}\right)\mu$
for all $\lambda,\mu\in \Lambda$, and
\item the pair $\left(Q,\ker\Delta\right)$
is a quiver presentation of $\Sigma\left(W\right)^{\mathsf{op}}$
where $Q$ is the quiver with vertices $\Lambda$ and edges $\mathcal{E}$.
\end{itemize}
\end{theorem}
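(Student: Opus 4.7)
My plan is to establish the three bullets in order, using the map $\Delta$ to realize $k\mathcal{X}$ as an explicit preimage of $\Sigma\left(W\right)^{\mathsf{op}}$. The first step, from which everything else follows, is to verify that $\Delta$ restricts to a surjective anti-homomorphism $k\mathcal{X}\twoheadrightarrow\Sigma\left(W\right)$, equivalently a surjective algebra homomorphism $k\mathcal{X}\twoheadrightarrow\Sigma\left(W\right)^{\mathsf{op}}$, under the identification that sends the singleton chain $\left(J\right)$ to $x_J$. Multiplicativity is checked on the orbit basis $\mathcal{X}$: concatenating two orbits and applying $\Delta$ should reproduce, in the opposite order, the Solomon product $x_Jx_K$, with the twist by the $S^\ast$-action \autoref{ActionDefinition} encoding the conjugations appearing in Solomon's expansion. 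Surjectivity is immediate from the singleton case, yielding $\Sigma\left(W\right)^{\mathsf{op}}\cong k\mathcal{X}/\ker\Delta$.

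Next I would construct $\Lambda$ by lifting the complete set of primitive orthogonal idempotents $\left\{e_C\right\}$ of $\Sigma\left(W\right)$ from \cite{idempotents}, indexed by $W$-conjugacy classes of subsets of $S$. Standard idempotent lifting applies once we know $\ker\Delta\subseteq\rad\left(k\mathcal{X}\right)$, which in turn follows from either an explicit identification of the simple $k\mathcal{X}$-modules as pull-backs along $\Delta$ or from a nilpotent filtration of $\ker\Delta$ induced by the chain-length grading on $\mathcal{A}$. To obtain the second bullet I would additionally arrange the lifts so that $\lambda\left(k\mathcal{X}\right)\mu\cap\mathcal{X}$ spans $\lambda\left(k\mathcal{X}\right)\mu$ for all $\lambda,\mu\in\Lambda$. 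This should follow by rewriting the formulas of \cite{idempotents} for $e_C$ in the chain-orbit language of $k\mathcal{X}$, together with a combinatorial analysis showing that left and right multiplication by these lifts preserves the $k$-span of $\mathcal{X}$ on each Peirce component.

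With these two bullets in place, the third follows from standard Gabriel theory. The quiver of the basic algebra $\Sigma\left(W\right)^{\mathsf{op}}$ has vertex set $\Lambda$ and arrows $\lambda\to\mu$ in bijection with a $k$-basis of the corresponding Peirce component of $\rad\left(k\mathcal{X}\right)/\rad\left(k\mathcal{X}\right)^2$. The second bullet lets me choose $\mathcal{E}\subseteq\mathcal{X}$ so that its elements in each Peirce component descend to such a basis. The natural map from the path algebra $kQ$, sending each arrow $\epsilon$ to the orbit $\epsilon\in\mathcal{X}\subseteq k\mathcal{X}$ and each vertex $\lambda$ to the corresponding idempotent, is then surjective onto $k\mathcal{X}$, and composing with $\Delta$ produces the desired presentation $\left(Q,\ker\Delta\right)$ of $\Sigma\left(W\right)^{\mathsf{op}}$.

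The main obstacle is the second bullet: producing primitive orthogonal idempotents $\Lambda\subseteq k\mathcal{X}$ whose Peirce decomposition is compatible with the orbit basis $\mathcal{X}$. Generic idempotent lifting only produces $\Lambda$ up to conjugation in $k\mathcal{X}$ and loses all control over the orbit basis; overcoming this requires exploiting the recursive structure of the $S^\ast$-action on $\mathcal{A}$ together with the explicit formulas in \cite{idempotents} to realize each idempotent canonically, and then verifying by a direct multiplication-by-orbit calculation that the resulting Peirce pieces are spanned by orbit sums.
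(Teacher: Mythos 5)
You should first note that the paper contains no proof of \autoref{GotzSummary} at all: it is imported verbatim from Pfeiffer's work \cite{gotz}, so a proof would have to reconstruct the main results of that paper. This is exactly where your sketch has its central gap. Your opening step --- that $\Delta$ restricts to a surjective anti-homomorphism $k\mathcal{X}\to\Sigma\left(W\right)$, i.e.\ that concatenation of orbit sums followed by $\Delta$ ``should reproduce, in the opposite order, the Solomon product'' --- is precisely the hard theorem of \cite{gotz}, and you assert it rather than prove it; even the preliminary facts that the $S^\ast$-action partitions $\mathcal{A}$ into orbits and that $k\mathcal{X}$ is closed under concatenation are nontrivial and are only quoted in this paper (``it can be shown''). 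Everything downstream of your first paragraph depends on this unproved multiplicativity, so the proposal does not yet contain a proof of any of the three bullets.

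Two further points would fail as written. First, the theorem requires $\Lambda\subseteq\mathcal{X}$: the idempotents must themselves be orbit sums. Lifting the idempotents of \cite{idempotents} through $\ker\Delta$ gives idempotents of $k\mathcal{X}$ with no control over the orbit basis --- you name this as the ``main obstacle'' and leave it unresolved, so the first two bullets are not established by your route. In fact no lifting is needed and the route is backwards: $\Lambda$ is the explicit set of orbits of the length-zero chains $\left(J_0\right)$; these are visibly pairwise orthogonal idempotents summing to the identity, the positive-length orbits span a nilpotent ideal with split commutative semisimple quotient (so the length-zero orbit sums are primitive and complete), and since the action \autoref{ActionDefinition} conjugates every entry of a chain, each orbit has a well-defined source and target class, which gives the Peirce-compatibility of the second bullet directly. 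Second, your description of the third bullet misidentifies the quiver: the arrows of the Gabriel quiver of $\Sigma\left(W\right)^{\mathsf{op}}$ correspond to a basis of $\rad/\rad^2$ of $k\mathcal{X}/\ker\Delta$, not of $\rad\left(k\mathcal{X}\right)/\rad^2\left(k\mathcal{X}\right)$; these differ because $\ker\Delta\not\subseteq\rad^2\left(k\mathcal{X}\right)$ --- for instance, in type $B$ the orbits of odd-length (in particular length-one) chains already lie in $\ker\Delta$ by \autoref{DeltaBStreet}. This is exactly why the paper stipulates, right after the theorem, that $\mathcal{E}$ consist of irreducible orbits that remain linearly independent in $k\mathcal{X}/\ker\Delta$, and why in the type-$B$ case the surjectivity of $kQ_n\to k\mathcal{L}_{n+1}/\ker\Delta$ (\autoref{IotaSurjectiveModKernel}) requires several sections of combinatorial work rather than ``standard Gabriel theory.''
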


Although \cite{gotz} provides no uniform description of the set $\mathcal{E}$,
it can be calculated through an algorithm
and consists of orbits of chains $\left(J_0,J_1,\ldots,J_l\right)$
with $l\ge 1$ which are irreducible in $\mathcal{X}$
and linearly independent in $k\mathcal{X}/\ker\Delta$.
In contrast, the set $\Lambda$ is given explicitly as
the set of orbits of chains of the form $\left(J_0\right)$
for all $J_0\subseteq S$.
If the orbit of $\left(J_0,J_1,\ldots,J_l\right)$ is in $\mathcal{E}$
then we interpret it
as an edge of $Q$ whose source is the orbit of $\left(J_l\right)$
and whose destination is the orbit of $\left(J_0\right)$.

\section{Trees and forests}\label{TreeSection}
A {\em tree} is either a natural number or a diagram
$\vcenter{\begin{xy}<.2cm,0cm>:
(0,1)="1";
"1";(-1,0)*+!U{U}**\dir{-};
"1";(1,0)*+!U{V}**\dir{-};
\end{xy}}$ where $U,V$ are trees.
Trees which are natural numbers are called {\em leaves} while
trees of the form
$\vcenter{\begin{xy}<.2cm,0cm>:
(0,1)="1";
"1";(-1,0)*+!U{U}**\dir{-};
"1";(1,0)*+!U{V}**\dir{-};
\end{xy}}$
are called {\em (inner) nodes}.
The positions of the nodes in a tree
can be specified by elements of the free monoid
$\left\{1,2\right\}^\ast$ by labeling the tree itself
by the empty word~$\emptyset$ and labeling
the left and right children of the node labeled by $w\in\left\{1,2\right\}^\ast$
by $w1$ and $w2$ respectively. We designate the node of a tree $U$
in position $w\in\left\{1,2\right\}^\ast$ by~$U_w$.

An {\em unlabeled forest} is a sequence of trees.
Independent of the labeling convention described above,
a {\em labeled forest} is a sequence of trees
whose nodes are labeled by natural numbers in such a way that the label 
of every node is greater than that of its parent if it has one,
and each number $1,2,\ldots,l$ is the label of exactly one node,
where~$l$ is the number of nodes in the sequence.
For example, the nodes in positions $\emptyset,2,21$ of the first tree of
the labeled forest
\[\vcenter{\begin{xy}<.3cm,0cm>:
(2,4)="1"*+!U{_1};
"1";(1,3)*+!U{\color{red}1}**\dir{-};
"1";(6,3)="4"*+!U{_4}**\dir{-};
"4";(4,2)="5"*+!U{_5}**\dir{-};
"5";(3,1)*+!U{\color{red}1}**\dir{-};
"5";(5,1)*+!U{\color{red}2}**\dir{-};
"4";(7,2)*+!U{\color{red}5}**\dir{-};
(11,4)="2"*+!U{_2};
"2";(9,3)="3"*+!U{_3}**\dir{-};
"3";(8,2)*+!U{\color{red}1}**\dir{-};
"3";(10,2)*+!U{\color{red}2}**\dir{-};
"2";(12,3)*+!U{\color{red}4}**\dir{-};
\end{xy}}\]
are labeled by $1,4,5$ while the nodes in positions $\emptyset,1$
of the second tree are labeled by $2,3$.

Next we introduce some invariants of a forest $X$,
regardless of whether $X$ is labeled or unlabeled.
The number of nodes in $X$
is called its {\em length} and is denoted by $\ell\left(X\right)$.
The sequence of leaves in $X$ is called its {\em foliage}
and is denoted by $\underline{X}$.
The sum of the leaves of a tree is called its {\em value}.
The sequence of values of the trees in $X$ is called its {\em squash}
and is denoted by $\overline{X}$.
Finally, the sum of the values of the trees of a forest is called its {\em value}.
For example, if $X$ is the forest shown above, then
$\overline{X}=97$ and $\underline{X}=1125124$
while $\ell\left(X\right)=5$ and the value of $X$ is sixteen.

The sets of labeled and unlabeled forests of value $n\in\mathbb{N}$
are denoted by $L_n$ and $M_n$ respectively. 
Both sets become categories through the partial product $\bullet$
defined as follows.
Whenever two forests $X$ and $Y$ satisfy $\underline{X}=\overline{Y}$
we define $X\bullet Y$ to be the forest
obtained from $X$ by replacing its leaves with the trees of $Y$ in the same order.
Note that this operation replaces each leaf of $X$ with a
tree of $Y$ of the same squash and increases the length of $X$ by
$\ell\left(Y\right)$. In case $X$ and $Y$ are labeled forests, we also
increment the node labels of $Y$ by $\ell\left(X\right)$ to ensure that
the product $X\bullet Y$ will also be a labeled forest.

Taking $X\bullet Y$ to be zero whenever
$\underline{X}\ne\overline{Y}$ makes $kL_n$ and $kM_n$ into $k$-algebras.
Naturally $L_n$ is related to $M_n$ through the functor
$\E:L_n\to M_n$ that erases all the node labels of a forest.
The letter $\E$ stands for {\em erase}.
The functor $\E$ induces a homomorphism
of algebras $kL_n\to kM_n$.

The category $\mathcal{A}$ associated to a Coxeter group $W$ of rank~$n$
is equivalent to the category $L_{n+1}$ of labeled forests of value $n+1$
through the functor $\varphi$ that we now describe.
We identify the Coxeter generating set $S$
of $W$ with the set $\left\{1,2,\ldots,n\right\}$.
If $J\subseteq S$ with $\left|J\right|=n+1-j$ then we
write $S\setminus J
=\left\{t_1,t_2,\ldots,t_{j-1}\right\}$
where $t_1<t_2<\cdots<t_{j-1}$.
We put $t_0=0$ and $t_j=n+1$ and let
$\varphi\left(J\right)$ be the composition $q_1q_2\cdots q_j$
where $q_i=t_i-t_{i-1}$ for all $1\le i\le j$.
Then $\varphi$ is a bijection between the subsets of
$S$ and the compositions of $n+1$.

We extend $\varphi$ to a map $\mathcal{A}\to L_{n+1}$ as follows.
If $a=\left(J_0,J_1,\ldots,J_l\right)\in\mathcal{A}$
is a chain of subsets of $S$ then
$\varphi\left(J_0\right),\varphi\left(J_1\right),
\ldots,\varphi\left(J_l\right)$
is a sequence of compositions of $n+1$. Note that for each $1\le i\le l$ the composition
$\varphi\left(J_i\right)$ is a refinement of
$\varphi\left(J_{i-1}\right)$ obtained by replacing some part $x$
with two contiguous parts whose sum is $x$. There is a unique
labeled forest $X_i$ of length one with squash $\varphi\left(J_{i-1}\right)$
and foliage $\varphi\left(J_i\right)$ for each $1\le i\le l$.
We define $\varphi\left(a\right)
=X_1\bullet X_2\bullet\cdots\bullet X_l$.
The map $\varphi:\mathcal{A}\to L_{n+1}$ is a bijection.
It allows us to identify chains of subsets
of $S$ with labeled forests.

We can now use \autoref{GotzSummary}
to construct a presentation of the descent algebra
of the hyperoctahedral group $W_n$ in the setting of
labeled forests rather than chains of subsets of $S$.
This point of view provides another way of looking
at chains of subsets, highlighting certain aspects of the theory
not directly seen in the context of chains of subsets.
For this purpose we briefly review the results of \cite{quiversn}
on the program for the symmetric group.

Let $S$ be the Coxeter generating set of~$\mathfrak{S}_n$. 
Let $X=X_1X_2\cdots X_j\in L_n$ where
$X_1,X_2,\ldots,X_j$ are labeled trees
and suppose that $a\in\mathcal{A}$
is such that $\varphi\left(a\right)=X$.
Then the action of $S^\ast$ on $\mathcal{A}$
in \autoref{ActionDefinition}
is such that the orbit of $a$ corresponds under
$\varphi$ with the orbit of 
$X$ under the action of $\mathfrak{S}_j$
permuting the trees $X_1,X_2,\ldots,X_j$.
In order to distinguish the $S^\ast$-actions in types~$A$ and $B$, we 
call the $\mathfrak{S}_j$-orbit of a labeled forest $X$ with $j$~trees an {\em A-orbit}
and we denote it by $\left[X\right]_A$. 

Of course, it also makes sense to permute the trees of an unlabeled forest,
even if this action does not correspond with an $S^\ast$-action.
If $X=X_1X_2\cdots X_j\in M_n$ is an unlabeled forest
where $X_1,X_2,\ldots,X_j$ are trees,
then we also call the $\mathfrak{S}_j$-orbit of $X$ an {\em A-orbit}
and we denote it by $\left[X\right]_A$.
Since the trees of a labeled
forest have unique node labels, the A-orbit of a labeled forest
$X$ typically has more elements than the A-orbit of $\E\left(X\right)$.
In fact $\E\left[X\right]_A
=\alpha_X\left[\E\left(X\right)\right]_A$
where $\alpha_X$ is the index of the stabilizer of $X$ in $\mathfrak{S}_j$
in the stabilizer of $\E\left(X\right)$ in $\mathfrak{S}_j$.
We denote the sets of A-orbits
in $kL_n$ and $kM_n$ by $\mathcal{L}_n$ and $\mathcal{M}_n$ respectively.
Then $k\mathcal{L}_n$ and $k\mathcal{M}_n$
come out to be subalgebras 
of $kL_n$ and $kM_n$ and the map
$\E:k\mathcal{L}_n\to k\mathcal{M}_n$
is a homomorphism of algebras since
$\E\left[X\right]_A =\alpha_X\left[\E\left(X\right)\right]_A
\in k\mathcal{M}_n$.

Finally, since the elements of $M_n$ are forests
with leaves in $\mathbb{N}$ there is a natural map
$\pi:kM_n\to k\mathbb{N}^\ast$ given by recursively replacing each node
of a tree with its Lie bracket in $k\mathbb{N}^\ast$.
That is, for unlabeled trees $U$ we put
\[\pi\left(U\right)=\begin{cases}
\pi\left(U_1\right)\pi\left(U_2\right)
-\pi\left(U_2\right)\pi\left(U_1\right)
&\text{if $\ell\left(U\right)>0$}\\
U&\text{if $\ell\left(U\right)=0$}
\end{cases}\]
and define
$\pi\left(X_1X_2\cdots X_j\right)
=\pi\left(X_1\right)\pi\left(X_2\right)\cdots
\pi\left(X_j\right)$ where $X_1,X_2,\ldots,X_j$
are unlabeled trees.
Then under the equivalence $\varphi$
the map $\Delta:k\mathcal{A}\to k 2^S$ 
corresponds with a map
$kL_n\to k\mathbb{N}^\ast$
which comes out to be the composition $\pi\circ\E$.
The calculation above allows us to
view the descent algebra $\Sigma\left(\mathfrak{S}_n\right)$
as the quotient of $k\mathcal{L}_n$ by $\ker\Delta$.
The remainder of \cite{quiversn}
deals with finding a quiver whose path algebra is $k\mathcal{L}_n$
and describing $\ker\Delta$ in terms of this path algebra. These
results are not needed in this article.

We are now in a position to carry out a similar program for the descent
algebra of the hyperoctahedral group $W_n$.

\section{Preliminaries for type~$B$}\label{PreliminariesSection}
In addition to the definitions in \autoref{TreeSection} common
to the Coxeter groups of types~$A$ and~$B$ we introduce the
following constructions specific to type~$B$.
If $U$ is a labeled tree, then the tree $\overleftarrow{U}$ defined by
\begin{align*}
\overleftarrow{U}&=\begin{cases}
\vcenter{\begin{xy}<.4cm,0cm>:
(0,1)="1"*+!U{_i};
"1";(-1,0)*+!U{\overleftarrow{U_2}}**\dir{-};
"1";(1,0)*+!U{\overleftarrow{U_1}}**\dir{-};
\end{xy}}
&\text{if 
$U=\vcenter{\begin{xy}<.4cm,0cm>:
(0,1)="1"*+!U{_i};
"1";(-1,0)*+!U{U_1}**\dir{-};
"1";(1,0)*+!U{U_2}**\dir{-};
\end{xy}}$}\\
U&\text{if $\ell\left(U\right)=0$}
\end{cases}
\end{align*}
is a mirror image of $U$. We write
$\overleftrightarrow{U}=U-\overleftarrow{U}$.
If $U$ is an unlabeled tree,
then we modify the definitions of $\overleftarrow{U}$
and $\overleftrightarrow{U}$ by removing the node labels.

\begin{lemma}\label{DeltaRho} If $U$ is an unlabeled tree
then $\pi\left(\overleftarrow{U}\right)
=\left(-1\right)^{\ell\left(U\right)}\pi\left(U\right)$
so that
\[\pi\left(\overleftrightarrow{U}\right)=\begin{cases}
2\pi\left(U\right)&\text{if $\ell\left(U\right)$ is odd}\\
0&\text{if $\ell\left(U\right)$ is even.}\end{cases}\]
\end{lemma}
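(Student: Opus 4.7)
The plan is a straightforward induction on $\ell(U)$, the number of inner nodes of $U$, with the Jacobi-like antisymmetry of the Lie bracket doing all the work.

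For the base case $\ell(U)=0$, the tree $U$ is a leaf, so by definition $\overleftarrow{U}=U$ and $\pi(U)=U$; since $(-1)^0 = 1$, the identity holds trivially.

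For the inductive step, suppose $\ell(U)>0$, so $U$ has children $U_1,U_2$. By the definition of $\overleftarrow{U}$ (with the node labels removed, as noted in the paragraph above the lemma), $\overleftarrow{U}$ is the tree whose left and right children are $\overleftarrow{U_2}$ and $\overleftarrow{U_1}$ respectively. Applying the recursive formula for $\pi$ and then the inductive hypothesis to $U_1$ and $U_2$ gives
\[
\pi\!\left(\overleftarrow{U}\right)
= \pi\!\left(\overleftarrow{U_2}\right)\pi\!\left(\overleftarrow{U_1}\right)
- \pi\!\left(\overleftarrow{U_1}\right)\pi\!\left(\overleftarrow{U_2}\right)
= (-1)^{\ell(U_1)+\ell(U_2)}\!\left[\pi(U_2)\pi(U_1)-\pi(U_1)\pi(U_2)\right].
\]
Since $\ell(U)=1+\ell(U_1)+\ell(U_2)$, the sign $(-1)^{\ell(U_1)+\ell(U_2)}$ equals $-(-1)^{\ell(U)}$, and the bracket on the right is the negative of $\pi(U_1)\pi(U_2)-\pi(U_2)\pi(U_1)=\pi(U)$. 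Multiplying the two minus signs gives $\pi(\overleftarrow{U})=(-1)^{\ell(U)}\pi(U)$, completing the induction.

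The second assertion is then immediate: subtracting gives
\[
\pi\!\left(\overleftrightarrow{U}\right) = \pi(U)-\pi\!\left(\overleftarrow{U}\right) = \bigl(1-(-1)^{\ell(U)}\bigr)\pi(U),
\]
which equals $2\pi(U)$ when $\ell(U)$ is odd and $0$ when $\ell(U)$ is even.

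There is no real obstacle here; the only thing to be careful about is the swap of children in the definition of $\overleftarrow{U}$, which is precisely what introduces the extra minus sign that combines with the two inductive signs to yield the correct parity.
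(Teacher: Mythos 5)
Your proof is correct and follows essentially the same route as the paper: induction on $\ell\left(U\right)$, with the swap of children in $\overleftarrow{U}$ producing the extra sign that combines with the inductive signs, and the formula for $\pi\left(\overleftrightarrow{U}\right)$ obtained by subtraction. No issues.
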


\begin{proof}
Put $l=\ell\left(U\right)$.
If $l=0$ then $\overleftarrow{U}=U$ so that
$\pi\left(\overleftarrow{U}\right)=\pi\left(U\right)
=\left(-1\right)^l\pi\left(U\right)$.
Suppose that $l>0$ and put $l_1=\ell\left(U_1\right)$
and $l_2=\ell\left(U_2\right)$ so that $l=l_1+l_2+1$. Then
\begin{align*}
\pi\left(\vcenter{\begin{xy}<.4cm,0cm>:
(0,1)="1";
"1";(-1,0)*+!U{\overleftarrow{U_2}}**\dir{-};
"1";(1,0)*+!U{\overleftarrow{U_1}}**\dir{-};
\end{xy}}\right)
&=\pi\left(
\overleftarrow{U_2}\overleftarrow{U_1}
-\overleftarrow{U_1}\overleftarrow{U_2}\right)\\
&=\pi\left(\overleftarrow{U_2}\right)\pi\left(\overleftarrow{U_1}\right)
-\pi\left(\overleftarrow{U_1}\right)\pi\left(\overleftarrow{U_2}\right)\\
&=\left(-1\right)^{l_2}\pi\left(U_2\right)
\left(-1\right)^{l_1}\pi\left(U_1\right)
-\left(-1\right)^{l_1}\pi\left(U_1\right)
\left(-1\right)^{l_2}\pi\left(U_2\right)\\
&=\left(-1\right)^{l-1}\pi\left(U_2U_1-U_1U_2\right)\\
&=\left(-1\right)^l\pi\left(U_1U_2-U_2U_1\right)\\
&=\left(-1\right)^l\pi\left(\vcenter{\begin{xy}<.4cm,0cm>:
(0,1)="1";
"1";(-1,0)*+!U{U_1}**\dir{-};
"1";(1,0)*+!U{U_2}**\dir{-};
\end{xy}}\right)
\end{align*}
by induction.
\end{proof}

\begin{corollary}\label{EqualsReverse}
Suppose that $U$ is an unlabeled tree satisfying
$\overleftarrow{U}=U$.
Then the following statements are equivalent.
\begin{enumerate}
\item\label{t1} $\pi\left(U\right)\ne 0$
\item\label{t2} $\ell\left(U\right)$ is even
\item\label{t3} $\ell\left(U\right)=0$
\end{enumerate}
\end{corollary}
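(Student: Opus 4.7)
The plan is to prove the three implications $(\ref{t3}) \Rightarrow (\ref{t1}) \Rightarrow (\ref{t2}) \Rightarrow (\ref{t3})$, using \autoref{DeltaRho} for the middle implication and a short structural observation about self-mirror trees for the last one.

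For $(\ref{t3}) \Rightarrow (\ref{t1})$, if $\ell(U)=0$ then $U$ is a leaf, i.e.\ a natural number, and $\pi(U) = U \ne 0$ by the definition of $\pi$ on leaves. For $(\ref{t1}) \Rightarrow (\ref{t2})$, the hypothesis $\overleftarrow{U} = U$ combined with \autoref{DeltaRho} yields $\pi(U) = \pi(\overleftarrow{U}) = (-1)^{\ell(U)} \pi(U)$, so $\ell(U)$ odd would force $2\pi(U) = 0$, contradicting $\pi(U) \ne 0$ since the characteristic is zero.

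The main content is $(\ref{t2}) \Rightarrow (\ref{t3})$, which I would prove by its contrapositive: if $\ell(U) > 0$ and $\overleftarrow{U} = U$, then $\ell(U)$ is odd. Writing $U$ as an inner node with children $U_1, U_2$, the definition of $\overleftarrow{\cdot}$ gives that $\overleftarrow{U}$ has children $\overleftarrow{U_2}, \overleftarrow{U_1}$, so the equality $\overleftarrow{U} = U$ forces $U_1 = \overleftarrow{U_2}$ (and equivalently $U_2 = \overleftarrow{U_1}$). Since mirroring preserves length, $\ell(U_1) = \ell(U_2)$, hence $\ell(U) = 1 + 2\ell(U_1)$ is odd. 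This completes the cycle of implications.

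I do not expect any serious obstacle: the lemma supplies the analytic half of the corollary, and the combinatorial half is just the observation that a self-symmetric binary tree has a unique central node contributing the $+1$ to the length. The only mild care needed is to record that the recursion $U_1 = \overleftarrow{U_2}$ and $U_2 = \overleftarrow{U_1}$ are the same equation, so there is no additional constraint to check on $U_1$ and $U_2$ beyond this pairing.
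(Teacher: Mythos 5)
Your proposal is correct and uses the same two ingredients as the paper: the identity $\ell\left(U_1\right)=\ell\left(\overleftarrow{U_2}\right)=\ell\left(U_2\right)$ forced by $\overleftarrow{U}=U$ for the equivalence of \autoref{t2} and \autoref{t3}, and \autoref{DeltaRho} together with characteristic zero for the equivalence with \autoref{t1}. The only difference is cosmetic (a cycle of three implications instead of two separate equivalences), so there is nothing further to add.
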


\begin{proof}
Suppose that $\ell\left(U\right)$ is even.
If $\ell\left(U\right)>0$ then
$\ell\left(U\right)=\ell\left(U_1\right)+\ell\left(U_2\right)+1$
so one of $\ell\left(U_1\right)$ or $\ell\left(U_2\right)$
is even and the other is odd. But this is impossible because
$\ell\left(U_1\right)=\ell\left(\overleftarrow{U_2}\right)
=\ell\left(U_2\right)$.
Therefore $\ell\left(U\right)=0$.
This proves the equivalence of \autoref{t2} and \autoref{t3}.
It also proves that $\pi\left(U\right)=0$ implies that
$\ell\left(U\right)$ is odd,
since $\pi\left(V\right)=V\ne 0$ for any tree $V$ of length zero.
Conversely, if $\ell\left(U\right)$ is odd, then
$\pi\left(U\right)
=\pi\left(\overleftarrow{U}\right)
=-\pi\left(U\right)$
by \autoref{DeltaRho} so that
$\pi\left(U\right)=0$.
This proves the equivalence of \autoref{t1} and \autoref{t2}.
\end{proof}

\section{Delta and the monoid action}\label{DeltaSection}
In this section we translate the various elements
of \autoref{GotzSummary} to the context of labeled forests.
If $\left(W,S\right)$ is a Coxeter system then for each $s\in S$
the conjugate $s^{w_0}$ is also an element of $S$, where $w_0$
denotes the longest element of $W$.
It follows that conjugation by $w_0$ induces a permutation of $S$ which can
be extended to a permutation of the free monoid $S^\ast$
by conjugating all the letters of a word by $w_0$.

It is easy to see that conjugation by the longest element
of the Coxeter group of type~$A_i$ reverses the word $12\cdots i$
while conjugation by the longest element of the Coxeter group of type~$B_i$
fixes the word $12\cdots i$.
We can use this information to calculate the action
\autoref{ActionDefinition} of $S^\ast$ on $\mathcal{A}$ 
since that action is defined in terms of conjugation by longest elements.

Let $\left(W,S\right)$ be a Coxeter system of type~$B_n$.
Suppose $K\subseteq S$
and $S\setminus K=\left\{t_0,t_1,\ldots,t_{j-1}\right\}$
where $t_0<t_1<\cdots<t_{j-1}$.
Let $y_0,y_1,\ldots,y_j\in\mathbb{N}^\ast$ be such that
$12\cdots n=y_0t_0y_1t_1\cdots t_{j-1}y_j$.
For each $0\le i\le j$ let $K_i\subseteq S$ be the 
set of letters in the word $y_i$
so that $K=\bigcup_{i=0}^j K_i$.
Then $W_K$ is the direct product of the subgroups $W_{K_i}$ and
the longest element $w_K$ of $W_K$ is the product
$w_{K_0}w_{K_1}\cdots w_{K_j}$.
Note that $W_{K_0}$ is of type~$B$ while $W_{K_i}$
is of type~$A$ for all $1\le i\le j$.
Then by the observation above
\[\left(y_0y_1y_2\cdots y_j\right)^{w_K}=y_0
\widetilde{y_1}\widetilde{y_2}
\cdots \widetilde{y_j}\] where
$\widetilde{y}$ denotes the reverse
$s_ps_{p-1}\cdots s_1$ of a word
$y=s_1s_2\cdots s_p$ where $s_1,s_2,\ldots,s_p\in~S$.

We now consider the map $K\to S$ induced by
conjugation by $\omega=w_Kw_{K\cup\left\{s\right\}}$
for any $s\in S$. If $s\in K$ then $\omega$ is the identity element
of $W$, which induces the trivial map $x\mapsto x$.
Otherwise $s=t_i$ for some $0\le i\le j-1$.
Continuing the calculation above,
conjugation by $w_Kw_{K\cup\left\{t_0\right\}}$ sends
$y_0y_1\cdots y_j$ to
\begin{equation}\label{Omega1}
y_0\widetilde{y_1}
\widetilde{\widetilde{y_2}}
\cdots \widetilde{\widetilde{y_j}}
=y_0\widetilde{y_1}y_2\cdots y_j
\end{equation}
while conjugation by $w_Kw_{K\cup\left\{t_i\right\}}$
sends $y_0y_1\cdots y_j$ to
\begin{equation}\label{Omega2}
y_0\widetilde{\widetilde{y_1}}\cdots
\widetilde{\widetilde{y_i}\widetilde{y_{i+1}}}
\cdots \widetilde{\widetilde{y_j}}
=y_0y_1\cdots y_{i+1}y_i\cdots y_j
\end{equation}
for all $1\le i\le j-1$.
Now using \autoref{Omega1} and \autoref{Omega2}
we can determine the map $K\to S$ induced by conjugation by $\omega$.
For example, if $K=\left\{1,2,4,5,6,8,9\right\}$ then
conjugation by $\omega=w_Kw_{K\cup\left\{3\right\}}$ induces the map
\[\begin{array}{c|ccccccc}
x&1&2&4&5&6&8&9\\\hline
x^\omega&1&2&6&5&4&8&9
\end{array}\]
while conjugation by $\omega=w_Kw_{K\cup\left\{7\right\}}$ induces the map
\[\begin{array}{c|ccccccc}
x&1&2&4&5&6&8&9\\\hline
x^\omega&1&2&7&8&9&4&5
\end{array}\;\cdot\]
The discussion above proves the following proposition.

\begin{proposition}\label{DeltaBAlley}
Let $\left(W,S\right)$ be a Coxeter system of type~$B_n$.
Let $X=UV_1\cdots V_j\in L_{n+1}$ be a labeled forest where
$U,V_1,V_2,\ldots,V_j$ are trees and suppose that
$X$ corresponds with $\left(J_0,J_1,\ldots,J_l\right)\in\mathcal{A}$
under $\varphi$. Then
\[X.t_i=\begin{cases}
U\overleftarrow{V_1}V_2\cdots V_j&\text{if $i=0$}\\
UV_1\cdots V_{i+1}V_i\cdots V_j&\text{if $1\le i\le j-1$}
\end{cases}\]
where $S\setminus J_0=\left\{t_0,t_1,\cdots,t_{j-1}\right\}$
and ${t_0<t_1<\cdots<t_{j-1}}$.
Thus $\delta\left(X\right)$ is obtained from
\begin{equation}\label{DeltaAlleyFormula}\begin{cases}
U_1\overleftrightarrow{U_2}V_1\cdots V_j
&\text{if $U$ is the node of $X$ labeled $1$}\\
UV_1\cdots V_{i-1}\left(V_{i1}V_{i2}-V_{i2}V_{i1}\right)V_{i+1}\cdots V_j
&\text{if $V_i$ is the node of $X$ labeled $1$}
\end{cases}
\end{equation}
by subtracting $1$ from all the node labels.
Let $m$ be the greatest integer for which $U_{1^m}$ is defined.
Then iterating \autoref{DeltaAlleyFormula} gives
\begin{multline*}
\Delta\left(X\right)
=\pi\left(
U_{1^m}
\overleftrightarrow{U_{1^{m-1}2}}
\overleftrightarrow{U_{1^{m-2}2}}\cdots
\overleftrightarrow{U_2}
V_1\cdots V_j\right)\\
=\begin{cases}
2^m\pi\left(U_{1^m}U_{1^{m-1}2}
\cdots U_2V_1\cdots V_j\right)
&\text{if $\ell\left(U_{1^i2}\right)$ is odd for all $0\le i\le m-1$}\\
0&\text{otherwise}
\end{cases}
\end{multline*}
by \autoref{DeltaRho}.
\end{proposition}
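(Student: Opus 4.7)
The plan is to establish the three formulas in the proposition in order: the monoid action $X.t_i$, the single-step $\delta$, and the iterated $\Delta$. The first is a direct translation under $\varphi$ of the computation preceding the proposition, the second follows from the definition $\delta(a)=b-b.s$ combined with the first, and the third is a short induction on $l$.

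First I would verify the formula for $X.t_i$. The trees $U,V_1,\ldots,V_j$ correspond under $\varphi$ to the blocks $y_0,y_1,\ldots,y_j$ of the word $12\cdots n$ cut at $t_0<t_1<\cdots<t_{j-1}$: because $W_{K_0}$ is of type $B$ while $W_{K_1},\ldots,W_{K_j}$ are of type $A$, the tree $U$ corresponds to the type-$B$ block $y_0$ and each $V_i$ corresponds to the type-$A$ block $y_i$. Reversing a block, $y_i\mapsto\widetilde{y_i}$, translates under $\varphi$ to the mirror operation $V_i\mapsto\overleftarrow{V_i}$; this is an easy induction on $\ell(V_i)$ using that the chain of compositions encoded by $V_i$ is reversed termwise by reversing each composition in it. Substituting this translation into \autoref{Omega1} and \autoref{Omega2} yields the two cases of the $X.t_i$ formula.

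Next I would derive $\delta(X)$ from its definition. The chain $b=(J_1,\ldots,J_l)$ corresponds under $\varphi$ to the forest obtained from $X$ by removing the node labeled $1$, which must be the root of some tree in $X$ since its label cannot exceed any parent's, and promoting its two subtrees to roots. If this removed node is the root of $U$, then the resulting forest is $U_1U_2V_1\cdots V_j$ and the new cut $s$ sits at position $\overline{U_1}<\overline{U}=t_0$, hence $s$ is the smallest element of $S\setminus J_1$ and plays the role of $t_0$ in the action on $b$; applying the formula just proved gives $b.s=U_1\overleftarrow{U_2}V_1\cdots V_j$, so $\delta(X)=U_1\overleftrightarrow{U_2}V_1\cdots V_j$. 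If the removed node is the root of $V_i$, then the resulting forest is $UV_1\cdots V_{i-1}V_{i1}V_{i2}V_{i+1}\cdots V_j$ and the new cut $s$ satisfies $t_{i-1}<s<t_i$, so $s$ occupies position $i$ of $S\setminus J_1$; applying the case $i\geq 1$ of the $X.t_i$ formula to $b$ swaps the trees at positions $i$ and $i+1$ among the non-first trees of $b$, namely $V_{i1}$ and $V_{i2}$, giving the second case of \autoref{DeltaAlleyFormula}. The principal bookkeeping point, which I view as the main obstacle, is precisely this identification of the new cut $s$ with the $t_0$ (respectively $t_i$) cut of $b$.

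Finally I would prove the iterated formula by induction on $l$. The base case $l=0$ forces $m=0$, and the formula reduces to $\Delta(X)=X=\pi(X)$. For $l\geq 1$ we apply $\delta$ once and the hypothesis to each summand. If the node labeled $1$ is the root of $U$, then both summands of $\delta(X)$ share first tree $U_1$, whose left-spine depth is $m-1$, so the two applications of the hypothesis combine via multiplicativity of $\pi$ to $\pi\bigl(U_{1^m}\overleftrightarrow{U_{1^{m-1}2}}\cdots\overleftrightarrow{U_{12}}(U_2-\overleftarrow{U_2})V_1\cdots V_j\bigr)=\pi\bigl(U_{1^m}\overleftrightarrow{U_{1^{m-1}2}}\cdots\overleftrightarrow{U_2}V_1\cdots V_j\bigr)$. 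If the node labeled $1$ is the root of some $V_i$, then both summands share first tree $U$ with spine depth $m$, and the hypothesis yields $\pi\bigl(U_{1^m}\overleftrightarrow{U_{1^{m-1}2}}\cdots\overleftrightarrow{U_2}V_1\cdots V_{i-1}(V_{i1}V_{i2}-V_{i2}V_{i1})V_{i+1}\cdots V_j\bigr)$, which equals the claimed formula because the recursive definition of $\pi$ on an inner node gives $\pi(V_i)=\pi(V_{i1})\pi(V_{i2})-\pi(V_{i2})\pi(V_{i1})=\pi(V_{i1}V_{i2}-V_{i2}V_{i1})$. The closing simplification to $2^m\pi(U_{1^m}U_{1^{m-1}2}\cdots U_2V_1\cdots V_j)$ or to zero then follows immediately from \autoref{DeltaRho} applied to each $\overleftrightarrow{U_{1^i2}}$ factor.
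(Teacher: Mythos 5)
Your proposal is correct and follows essentially the same route as the paper, which derives the proposition directly from the preceding discussion (the block-reversal and block-swap formulas \autoref{Omega1} and \autoref{Omega2} translated through $\varphi$), leaving the $\delta$- and $\Delta$-bookkeeping implicit. Your identification of the new cut with the appropriate $t_0$ or $t_i$ of the shortened chain and the induction combining multiplicativity of $\pi$ with \autoref{DeltaRho} are exactly the details the paper takes for granted, and they check out.
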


If $X$ is as in \autoref{DeltaBAlley}
then it follows that the 
$S^\ast$-orbit of $X$ corresponds with the orbit of $X$
under the action of $\mathfrak{S}_2\wr\mathfrak{S}_j$
where $\mathfrak{S}_2$ acts on the trees $V_1,V_2,\ldots,V_j$
by $\overleftarrow{\cdot}$ and $\mathfrak{S}_j$
acts by permuting the trees $V_1,V_2,\ldots,V_j$.
Now if $X=UV_1V_2\cdots V_j$
is any labeled or unlabeled forest
where $U,V_1,V_2,\ldots,V_j$ are trees
we call the $\mathfrak{S}_2\wr\mathfrak{S}_j$-orbit of $X$
a {\em B-orbit} and we denote it by $\left[X\right]_B$.
The greatest integer $m$ for which $U_{1^m}$ is defined
is called the {\em depth} of $X$.

\begin{proposition}\label{DeltaBStreet}
Let $\left(W,S\right)$ be a Coxeter system of type $B_n$ and let
$X=UV_1V_2\cdots V_j\in L_{n+1}$ be a labeled forest 
where $U,V_1,V_2,\ldots,V_j$ are trees. If $m$ is the depth of $X$
and $r$ is the number of $V_i$ of positive length, then
\begin{equation*}
\Delta\left[X\right]_B
=2^{r+m}\pi\left(\rule{0pt}{12pt}
U_{1^m}U_{1^{m-1}2}\cdots U_2
\left[V_1\cdots V_j\right]_A\right)
\end{equation*}
if $\ell\left(U_{1^i2}\right)$ is odd for all $0\le i\le m-1$
and $\ell\left(V_i\right)$ is even for all $1\le i\le j$.
Otherwise $\Delta\left[X\right]_B=0$.
In particular, the B-orbits of forests of odd length are in $\ker\Delta$.
\end{proposition}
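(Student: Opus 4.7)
The plan is to apply \autoref{DeltaBAlley} term-by-term and sum across the B-orbit. By linearity, $\Delta[X]_B=\sum_{Y\in[X]_B}\Delta(Y)$. The $\mathfrak{S}_2\wr\mathfrak{S}_j$-action fixes $U$ and permutes or reverses only the tail trees, so the parity condition on the $\ell(U_{1^i2})$ in \autoref{DeltaBAlley} is constant across the orbit; if it fails, every $\Delta(Y)$ vanishes and $\Delta[X]_B=0$. When it holds, the multiplicativity of $\pi$ on concatenations factors each $\Delta(Y)$ as $2^m\pi(U_{1^m}U_{1^{m-1}2}\cdots U_2)\cdot\prod_i\pi(W_i^Y)$, where $(W_1^Y,\ldots,W_j^Y)$ is the tail of $Y$, reducing everything to evaluating
\[
\Sigma:=\sum_{Y\in[X]_B}\prod_{i=1}^j\pi(W_i^Y).
\]

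By \autoref{DeltaRho} the reversal $V_i\mapsto\overleftarrow{V_i}$ multiplies $\pi(V_i)$ by $(-1)^{\ell(V_i)}$, so the contribution of the $\mathfrak{S}_2^j$ flips to $\Sigma$ carries an overall factor $\prod_i(1+(-1)^{\ell(V_i)})$; this vanishes unless every $\ell(V_i)$ is even, which yields the ``otherwise $0$'' branch. Assume every $\ell(V_i)$ is even. By \autoref{EqualsReverse} applied to the underlying unlabeled tree of each positive-length $V_i$, $V_i\ne\overleftarrow{V_i}$ whenever $\ell(V_i)>0$, while the uniqueness of inner-node labels in a labeled forest gives $V_i\ne V_j$ for $i\ne j$ (the trees have pairwise disjoint label sets). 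I therefore expect the B-orbit to consist of exactly $j!\cdot 2^r$ distinct labeled forests, indexed by $(\tau,\epsilon)\in\mathfrak{S}_j\times\{0,1\}^r$: flipping a leaf changes nothing, while flipping any positive-length $V_i$ or permuting the tails nontrivially produces a new element. Each element contributes $\prod_i\pi(V_{\tau(i)})$ (even-length flips introduce no sign), and summing gives
\[
\Sigma=2^r\sum_{\tau\in\mathfrak{S}_j}\pi(V_{\tau(1)})\cdots\pi(V_{\tau(j)})=2^r\pi\bigl([V_1\cdots V_j]_A\bigr),
\]
where the second equality uses the triviality of the A-orbit stabilizer. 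Combining with the $U$-factor yields the stated formula.

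Finally, for the ``in particular'' claim, if $\Delta[X]_B\ne 0$ then each $\ell(V_i)$ is even and each $\ell(U_{1^i2})$, $0\le i\le m-1$, is odd. Since $U_{1^m}$ is a leaf, expanding along the left spine of $U$ gives $\ell(U)=m+\sum_{i=0}^{m-1}\ell(U_{1^i2})\equiv 2m\equiv 0\pmod2$, whence $\ell(X)=\ell(U)+\sum_i\ell(V_i)$ is even. The main obstacle I foresee is the orbit-counting step that identifies the B-orbit with $\mathfrak{S}_j\times\{0,1\}^r$: some care is required in verifying that different pairs $(\tau,\epsilon)$ produce different labeled forests, and this relies on both \autoref{EqualsReverse} and the uniqueness of inner-node labels — without these the wreath-product stabilizer would generally be larger and the clean coefficient $2^r$ would not emerge directly.
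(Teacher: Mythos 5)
Your strategy is the same as the paper's (sum $\Delta$ over the B-orbit, use \autoref{DeltaBAlley} to peel off the $2^m\pi(U_{1^m}U_{1^{m-1}2}\cdots U_2)$ factor, then use \autoref{DeltaRho} and \autoref{EqualsReverse} to collapse the flips and recover the A-orbit), but the counting step you yourself single out is wrong as stated. The uniqueness of node labels only separates the $V_i$ of \emph{positive} length; leaves carry no labels, so $V_i=V_{i'}$ with $i\ne i'$ happens whenever two of the $V_i$ are leaves of equal value (e.g.\ $V_1=V_2={\color{red}1}$, which occurs for most edges and vertices of $Q_n$). In that case permuting the tails does \emph{not} produce a new forest, the B-orbit has $2^r\,j!/\left|H\right|$ elements rather than $2^r\,j!$, and $\left[V_1\cdots V_j\right]_A$ has $j!/\left|H\right|$ terms rather than $j!$, where $H\le\mathfrak{S}_j$ is the stabilizer of the tuple $(V_1,\ldots,V_j)$ (it permutes equal leaves). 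Consequently both equalities in your display for $\Sigma$ fail: the first overcounts the orbit by the factor $\left|H\right|$, and the second invokes a ``triviality of the A-orbit stabilizer'' that is false by the same factor. The two errors cancel, so your final identity is correct, but the argument as written does not prove it; the repair is exactly the paper's device of summing over coset representatives $\sigma\in\mathfrak{S}_j/H$, as in its identity $2^{j-r}\left[X\right]_B=\sum_{\sigma\in\mathfrak{S}_j/H}U\left(V_{1.\sigma}+\overleftarrow{V_{1.\sigma}}\right)\cdots\left(V_{j.\sigma}+\overleftarrow{V_{j.\sigma}}\right)$, so that the same $\left|H\right|$ is absorbed on both sides and the coefficient $2^{r+m}$ emerges without any freeness assumption.

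A smaller instance of the same oversight occurs in your vanishing branch: the ``overall factor $\prod_i\bigl(1+(-1)^{\ell(V_i)}\bigr)$'' presumes the flips act freely on the orbit, but an odd-length $V_i$ can satisfy $\overleftarrow{V_i}=V_i$ (a node whose two children are equal leaves), in which case the flip fixes the forest and there is no pairwise cancellation; the contribution still vanishes, but because $\pi(V_i)=-\pi(V_i)=0$ by \autoref{DeltaRho}, not because of the stated factorization. Your treatment of the case where some $\ell(U_{1^i2})$ is even, and your deduction of the ``in particular'' statement from the parity bookkeeping $\ell(U)=m+\sum_{i=0}^{m-1}\ell(U_{1^i2})$, are both fine.
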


\begin{proof}
For any $\sigma\in\mathfrak{S}_j$ we observe that
\begin{equation*}
\pi\left(
\left(V_{1.\sigma}+\overleftarrow{V_{1.\sigma}}\right)\cdots
\left(V_{j.\sigma}+\overleftarrow{V_{j.\sigma}}\right)\right)\\
=\begin{cases}2^j
\pi\left(V_{1.\sigma}\cdots V_{j.\sigma}\right)
&\text{if $\ell\left(V_i\right)$ is even for all $i$}\\
0&\text{otherwise}\end{cases}
\end{equation*}
by \autoref{DeltaRho}. 
Next we observe that expanding the product
\begin{equation}\label{TypeBProduct}
U\left(V_{1.\sigma}+\overleftarrow{V_{1.\sigma}}\right)\cdots
\left(V_{j.\sigma}+\overleftarrow{V_{j.\sigma}}\right)
\end{equation}
results in the sum over all $J\subseteq\left\{1,2,\ldots,j\right\}$
of terms $UV_{1.\sigma}\cdots V_{j.\sigma}$
having $\overleftarrow{V_{i.\sigma}}$ in place of
$V_{i.\sigma}$ for all $i\in J$.
It follows that summing \autoref{TypeBProduct} as
$\sigma$ ranges over a set of representatives of the cosets
of the stabilizer $H$ of $V_1V_2\cdots V_j$ in $\mathfrak{S}_j$
results in a multiple of $\left[X\right]_B$. 
Assuming that $\ell\left(V_i\right)$ is even for all $i$ we observe that
each term of \autoref{TypeBProduct} will be duplicated for every $i$ such that
$\overleftarrow{V_i}=V_i$. By \autoref{EqualsReverse} we have
$\overleftarrow{V_i}=V_i$
exactly when $\ell\left(V_i\right)=0$.
Therefore \autoref{TypeBProduct} has $2^r$ distinct terms each appearing
$2^{j-r}$ times. Then by our description of the $S^\ast$-action
in \autoref{DeltaBAlley} we have
\begin{equation}\label{StreetB}
2^{j-r}\left[X\right]_B=
\sum_{\sigma\in\mathfrak{S}_j/H}
U\left(V_{1.\sigma}+\overleftarrow{V_{1.\sigma}}\right)\cdots
\left(V_{j.\sigma}+\overleftarrow{V_{j.\sigma}}\right)
\end{equation}
so that
\begin{align*}
\Delta\left[X\right]_B
&=2^{r-j}\sum_\sigma
\Delta\left(
U\left(V_{1.\sigma}+\overleftarrow{V_{1.\sigma}}\right)\cdots
\left(V_{j.\sigma}+\overleftarrow{V_{j.\sigma}}\right)\right)\\
&=2^{r-j+m}\sum_\sigma
\pi\left(
U_{1^m}U_{1^{m-1}2}\cdots U_2
\left(V_{1.\sigma}+\overleftarrow{V_{1.\sigma}}\right)\cdots
\left(V_{j.\sigma}+\overleftarrow{V_{j.\sigma}}\right)\right)\\
&=2^{r+m}\sum_\sigma
\pi\left(\rule{0pt}{12pt}
U_{1^m}U_{1^{m-1}2}\cdots U_2
V_{1.\sigma}\cdots V_{j.\sigma}\right)\\
&=2^{r+m}\pi\left(\rule{0pt}{12pt}
U_{1^m}U_{1^{m-1}2}\cdots U_2\left[V_1\cdots V_j\right]_A\right)
\end{align*}
by \autoref{DeltaBAlley}.
\end{proof}

\section{The quiver}\label{QuiverSection}
We define a quiver $Q_n$ in this section
and prove that $Q_n$ is the quiver of $\Sigma\left(W_n\right)$
in \autoref{ProofQuiverSection}.
According to \autoref{GotzSummary}
the vertices of $Q_n$ correspond with the B-orbits of compositions of $n+1$.
By \autoref{DeltaBStreet}
the B-orbits of forests of odd length are in $\ker\Delta$.
However, we will show in \autoref{ExtQuiver} that the B-orbits of forests
of length greater than two are in
$\rad^2\left(k\mathcal{L}_{n+1}/\ker\Delta\right)$.
We will therefore select a set of B-orbits of labeled forests of length two,
linearly independent modulo $\ker\Delta$,
to be the edges of $Q_n$.
Note that the source and destination 
of such an edge are the
B-orbits of the foliage and squash of any forest in the edge
by the comments following \autoref{GotzSummary}.

Let $X=UV_1\cdots V_j\in L_{n+1}$ have length two,
where $U,V_1,\ldots,V_j$ are labeled trees.
Then assuming that $\left[X\right]_B\not\in\ker\Delta$ it follows from \autoref{DeltaBStreet}
that the length of $V_i$ must be zero or two for all $i$
while the length of $U_2$ must be one if $U_2$ exists.
Then by taking a different representative $X$ of $\left[X\right]_B$
if necessary, we can assume that either
\begin{equation}\label{EdgePossibilities}
X=\vcenter{\begin{xy}<.2cm,0cm>:
(2,3)="1"*+!U{_1};
"1";(1,2)*+!U{\color{red}a}**\dir{-};
"1";(4,2)="2"*+!U{_2}**\dir{-};
"2";(3,1)*+!U{\color{red}b}**\dir{-};
"2";(5,1)*+!U{\color{red}c}**\dir{-};
(6,3)*+!UL{{\color{red}q_1q_2}\cdots{\color{red}q_j}};
\end{xy}}\qquad\text{or}\qquad
X=\vcenter{\begin{xy}<.2cm,0cm>:
(1,3)*+!UR{\color{red}q_0};
(3,3)="1"*+!U{_1};
"1";(2,2)*+!U{\color{red}a}**\dir{-};
"1";(5,2)="2"*+!U{_2}**\dir{-};
"2";(4,1)*+!U{\color{red}b}**\dir{-};
"2";(6,1)*+!U{\color{red}c}**\dir{-};
(7,3)*+!UL{{\color{red}q_1q_2}\cdots{\color{red}q_j}};
\end{xy}}
\end{equation}
for some $a,b,c,q_0,q_1,\ldots,q_j\in\mathbb{N}$.
In the first case
\begin{equation}\label{GreenArrow}
\Delta\left[X\right]_B
=2\pi\left(
\vcenter{\begin{xy}<.2cm,0cm>:
(1,2)*+!UR{\color{red}a};
(3,2)="1";
"1";(2,1)*+!U{\color{red}b}**\dir{-};
"1";(4,1)*+!U{\color{red}c}**\dir{-};
(5,2)*+!UL{\left[{\color{red}q_1q_2}\cdots{\color{red}q_j}\right]_A};
\end{xy}}
\right)
=2{\color{red}a}\left({\color{red}bc}-{\color{red}cb}\right)
\left[{\color{red}q_1q_2}\cdots{\color{red}q_j}\right]_A
\end{equation}
by \autoref{DeltaBStreet}.
Note that $\left[X\right]_B\in\ker\Delta$
if and only if $b=c$. If $b<c$ then we take
$\left[X\right]_B$ to be an edge of $Q_n$.
We remark that if $Y=\vcenter{\begin{xy}<.2cm,0cm>:
(2,3)="1"*+!U{_1};
"1";(1,2)*+!U{\color{red}a}**\dir{-};
"1";(4,2)="2"*+!U{_2}**\dir{-};
"2";(3,1)*+!U{\color{red}c}**\dir{-};
"2";(5,1)*+!U{\color{red}b}**\dir{-};
(6,3)*+!UL{{\color{red}q_1q_2}\cdots{\color{red}q_j}};
\end{xy}}$
then $\left[Y\right]_B$ is another element of $\mathcal{L}_{n+1}$
having the same foliage and squash as $\left[X\right]_B$.
However, \autoref{GreenArrow} shows that
$\left[Y\right]_B\equiv -\left[X\right]_B\pmod{\ker\Delta}$.
Therefore, we need not introduce an edge in $Q_n$
corresponding with $\left[Y\right]_B$.

Next we consider the second case in \autoref{EdgePossibilities}.
Observe that
\begin{multline}\label{GreenEdge}
\left[\vcenter{\begin{xy}<.2cm,0cm>:
(1,3)*+!UR{\color{red}q_0};
(3,3)="1"*+!U{_1};
"1";(2,2)*+!U{\color{red}a}**\dir{-};
"1";(5,2)="2"*+!U{_2}**\dir{-};
"2";(4,1)*+!U{\color{red}b}**\dir{-};
"2";(6,1)*+!U{\color{red}c}**\dir{-};
(7,3)*+!UL{{\color{red}q_1q_2}\cdots{\color{red}q_j}};
\end{xy}}\right]_B
+\left[\vcenter{\begin{xy}<.2cm,0cm>:
(1,3)*+!UR{\color{red}q_0};
(3,3)="1"*+!U{_1};
"1";(2,2)*+!U{\color{red}c}**\dir{-};
"1";(5,2)="2"*+!U{_2}**\dir{-};
"2";(4,1)*+!U{\color{red}a}**\dir{-};
"2";(6,1)*+!U{\color{red}b}**\dir{-};
(7,3)*+!UL{{\color{red}q_1q_2}\cdots{\color{red}q_j}};
\end{xy}}\right]_B\\
+\left[\vcenter{\begin{xy}<.2cm,0cm>:
(1,3)*+!UR{\color{red}q_0};
(3,3)="1"*+!U{_1};
"1";(2,2)*+!U{\color{red}b}**\dir{-};
"1";(5,2)="2"*+!U{_2}**\dir{-};
"2";(4,1)*+!U{\color{red}c}**\dir{-};
"2";(6,1)*+!U{\color{red}a}**\dir{-};
(7,3)*+!UL{{\color{red}q_1q_2}\cdots{\color{red}q_j}};
\end{xy}}\right]_B\in\ker\Delta
\end{multline}
by \autoref{DeltaBStreet}.
If $a,b,c$ are distinct, then we take only
{\em two} of the terms in \autoref{GreenEdge}
to be edges of $Q_n$.
If $\left|\left\{a,b,c\right\}\right|=2$ then 
it is easy to check that one of the terms of \autoref{GreenEdge}
is in $\ker\Delta$ while the other two are negatives of one another
modulo $\ker\Delta$. Therefore we take
only {\em one} of the terms in \autoref{GreenEdge}
to be an edge of $Q_n$ in this case. Finally, all three terms of \autoref{GreenEdge}
are in $\ker\Delta$ when $a=b=c$ so we introduce no edges in this case.
As in the remark above, exchanging the children of the node
labeled $2$ of any of the terms of \autoref{GreenEdge}
results in the negative of that term modulo $\ker\Delta$ so we
need not introduce edges for any of those B-orbits.

\begin{remark}\label{IdentifyVertices}
By \autoref{DeltaBAlley} two compositions 
$q_0q_1\cdots q_j$ and $r_0r_1\cdots r_j$
are in the same B-orbit if and only if
$q_1\cdots q_j$ is a rearrangement of $r_1\cdots r_j$.
Therefore each vertex of $Q_n$ can be represented by a pair $\left(q_0,q\right)$
where $1\le q_0\le n+1$ and $q$ is a partition of $n+1-q_0$.
Since we can recover $q_0$ from $q$ we can
drop $q_0$ from the notation and label the vertices of $Q_n$ by
the partitions of the numbers $0,1,\ldots,n$.
This is a natural labeling since
the partitions of $0,1,\ldots,n$ are also used to label the
primitive orthogonal idempotents of $\Sigma\left(W_n\right)$
calculated in \cite{BergeronHyperoctahedralII}.
\end{remark}

For concreteness we now specify the 
edges of $Q_n$ in terms
of their sources and destinations, which
we identify with the partitions of $0,1,\ldots,n$
as explained in \autoref{IdentifyVertices}.
Let $p$ and $q$ be vertices of $Q_n$.
Regarding a partition as an equivalence class
of a composition under rearrangement, we represent
a partition by any convenient representative,
which need not be non-increasing or non-decreasing.

\begin{enumerate}
\item[(Q1)]\label{BlueEdge}
$Q_n$ has an edge 
from $p$ to $q$
if $q=q_1q_2\cdots q_j$ 
for some $q_1,q_2,\ldots,q_j\in\mathbb{N}$ and
$p$ has parts $b<c$ such that $q$ can be obtained
from $p$ by deleting the parts $b$ and $c$.
We take this edge to be
$\left[\vcenter{\begin{xy}<.2cm,0cm>:
(2,3)="1"*+!U{_1};
"1";(1,2)*+!U{\color{red}a}**\dir{-};
"1";(4,2)="2"*+!U{_2}**\dir{-};
"2";(3,1)*+!U{\color{red}b}**\dir{-};
"2";(5,1)*+!U{\color{red}c}**\dir{-};
(6,3)*+!UL{{\color{red}q_1q_2}\cdots{\color{red}q_j}};
\end{xy}}\right]_B$
where $a=n+1-\sum_{i=1}^jq_i-b-c$.

\item[(Q2)]\label{GreenEdge1}
$Q_n$ has two edges 
from $p$ to $q$ if
$p=abcq_1q_2\cdots q_j$
for some $a<b<c$ and $q_1,q_2,\ldots,q_j\in\mathbb{N}$
such that $q$ can be obtained from $p$ by replacing $a,b,c$ with $a+b+c$.
We take these edges to be
$\left[
\vcenter{\begin{xy}<.2cm,0cm>:
(1,3)*+!UR{\color{red}q_0};
(3,3)="1"*+!U{_1};
"1";(2,2)*+!U{\color{red}a}**\dir{-};
"1";(5,2)="2"*+!U{_2}**\dir{-};
"2";(4,1)*+!U{\color{red}b}**\dir{-};
"2";(6,1)*+!U{\color{red}c}**\dir{-};
(7,3)*+!UL{{\color{red}q_1q_2}\cdots{\color{red}q_j}};
\end{xy}}\right]_B$
and $\left[
\vcenter{\begin{xy}<.2cm,0cm>:
(1,3)*+!UR{\color{red}q_0};
(3,3)="1"*+!U{_1};
"1";(2,2)*+!U{\color{red}b}**\dir{-};
"1";(5,2)="2"*+!U{_2}**\dir{-};
"2";(4,1)*+!U{\color{red}a}**\dir{-};
"2";(6,1)*+!U{\color{red}c}**\dir{-};
(7,3)*+!UL{{\color{red}q_1q_2}\cdots{\color{red}q_j}};
\end{xy}}\right]_B$
where $q_0=n+1-\sum_{i=1}^jq_i-a-b-c$.

\item[(Q3)]\label{GreenEdge2}
$Q_n$ has one edge from $p$ to $q$ 
if $p=aabq_1q_2\cdots q_j$
for some $a\ne b$ and $q_1,q_2,\ldots,q_j\in\mathbb{N}$
such that $q$ can be obtained from $p$ by replacing $a,a,b$ with $2a+b$.  
We take this edge to be $\left[\vcenter{\begin{xy}<.2cm,0cm>:
(1,3)*+!UR{\color{red}q_0};
(3,3)="1"*+!U{_1};
"1";(2,2)*+!U{\color{red}a}**\dir{-};
"1";(5,2)="2"*+!U{_2}**\dir{-};
"2";(4,1)*+!U{\color{red}a}**\dir{-};
"2";(6,1)*+!U{\color{red}b}**\dir{-};
(7,3)*+!UL{{\color{red}q_1q_2}\cdots{\color{red}q_j}};
\end{xy}}\right]_B$ if $a<b$ or
$\left[\vcenter{\begin{xy}<.2cm,0cm>:
(1,3)*+!UR{\color{red}q_0};
(3,3)="1"*+!U{_1};
"1";(2,2)*+!U{\color{red}a}**\dir{-};
"1";(5,2)="2"*+!U{_2}**\dir{-};
"2";(4,1)*+!U{\color{red}b}**\dir{-};
"2";(6,1)*+!U{\color{red}a}**\dir{-};
(7,3)*+!UL{{\color{red}q_1q_2}\cdots{\color{red}q_j}};
\end{xy}}\right]_B$ if $a>b$
where $q_0=n+1-\sum_{i=1}^jq_i-2a-b$.
\end{enumerate}

Note that each edge of $Q_n$ goes from a vertex with
$m$~parts to a vertex with $m-2$~parts for some $m\ge 2$.
Disregarding vertices not incident with any edges, it follows that
$Q_n$ has at least two connected components.
For example, omitting the vertices $1^4,1^5,1^6,2^2,2^3,3^2$
the quiver $Q_6$ has two connected components, which are shown
in \autoref{OddB6} and \autoref{EvenB6}.
The full quiver $Q_6$ has 30 vertices
corresponding with the $30$ partitions of the numbers $0,1,\ldots,6$.

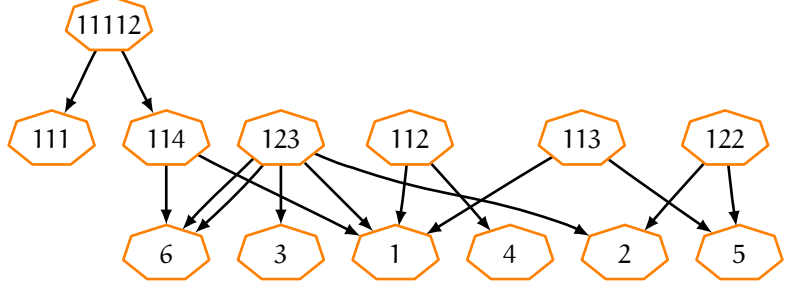
\begin{figure}
\caption{The odd part of $Q_6$}\label{OddB6}
\begin{tikzpicture}[>=latex,join=bevel,scale=.60]
\pgfsetlinewidth{1bp}
\pgfsetcolor{black}
\draw [->] (171bp,74bp) .. controls (171bp,65bp) and (171bp,56bp)  .. (171bp,36bp);
\pgfsetcolor{black}
\draw [->] (265bp,75bp) .. controls (274bp,65bp) and (286bp,52bp)  .. (303bp,32bp);
\pgfsetcolor{black}
\draw [->] (192bp,81bp) .. controls (199bp,78bp) and (208bp,74bp)  .. (216bp,72bp) .. controls (274bp,52bp) and (292bp,57bp)  .. (351bp,36bp) .. controls (353bp,35bp) and (354bp,35bp)  .. (365bp,29bp);
\draw [->] (186bp,75bp) .. controls (197bp,64bp) and (210bp,51bp)  .. (229bp,32bp);
\draw [->] (342bp,79bp) .. controls (323bp,68bp) and (293bp,49bp)  .. (262bp,30bp);
\pgfsetcolor{black}
\draw [->] (99bp,74bp) .. controls (99bp,65bp) and (99bp,56bp)  .. (99bp,36bp);
\pgfsetcolor{black}
\draw [->] (55bp,146bp) .. controls (50bp,137bp) and (45bp,125bp)  .. (35bp,106bp);
\pgfsetcolor{black}
\draw [->] (71bp,146bp) .. controls (76bp,137bp) and (81bp,125bp)  .. (91bp,106bp);
\pgfsetcolor{black}
\draw [->] (250bp,74bp) .. controls (249bp,65bp) and (248bp,55bp)  .. (245bp,35bp);
\pgfsetcolor{black}
\draw [->] (377bp,77bp) .. controls (392bp,66bp) and (416bp,49bp)  .. (442bp,31bp);
\pgfsetcolor{black}
\draw [->] (119bp,80bp) .. controls (143bp,67bp) and (184bp,48bp)  .. (221bp,29bp);
\pgfsetcolor{black}
\draw [->] (452bp,74bp) .. controls (453bp,65bp) and (454bp,55bp)  .. (457bp,35bp);
\pgfsetcolor{black}
\draw [->] (437bp,75bp) .. controls (428bp,65bp) and (416bp,52bp)  .. (399bp,32bp);
\pgfsetcolor{black}
\draw [->] (154bp,78bp) .. controls (143bp,67bp) and (127bp,53bp)  .. (109bp,33bp);
\draw [->] (160bp,74bp) .. controls (151bp,63bp) and (137bp,49bp)  .. (117bp,31bp);

\pgfsetcolor{orange}
\begin{scope}
\draw (90bp,158bp) -- (85bp,173bp) -- (63bp,180bp) -- (41bp,173bp) -- (36bp,158bp) -- (51bp,146bp) -- (75bp,146bp) -- cycle;
\draw (63bp,162bp) node {$11112$};
\end{scope}
\begin{scope}
\draw (279bp,86bp) -- (274bp,101bp) -- (252bp,108bp) -- (230bp,101bp) -- (225bp,86bp) -- (240bp,74bp) -- (264bp,74bp) -- cycle;
\draw (252bp,90bp) node {$112$};
\end{scope}
\begin{scope}
\draw (387bp,86bp) -- (382bp,101bp) -- (360bp,108bp) -- (338bp,101bp) -- (333bp,86bp) -- (348bp,74bp) -- (372bp,74bp) -- cycle;
\draw (360bp,90bp) node {$113$};
\end{scope}
\begin{scope}
\draw (270bp,14bp) -- (265bp,29bp) -- (243bp,36bp) -- (221bp,29bp) -- (216bp,14bp) -- (231bp,2bp) -- (255bp,2bp) -- cycle;
\draw (243bp,18bp) node {$1$};
\end{scope}
\begin{scope}
\draw (198bp,14bp) -- (193bp,29bp) -- (171bp,36bp) -- (149bp,29bp) -- (144bp,14bp) -- (159bp,2bp) -- (183bp,2bp) -- cycle;
\draw (171bp,18bp) node {$3$};
\end{scope}
\begin{scope}
\draw (414bp,14bp) -- (409bp,29bp) -- (387bp,36bp) -- (365bp,29bp) -- (360bp,14bp) -- (375bp,2bp) -- (399bp,2bp) -- cycle;
\draw (387bp,18bp) node {$2$};
\end{scope}
\begin{scope}
\draw (54bp,86bp) -- (49bp,101bp) -- (27bp,108bp) -- (5bp,101bp) -- (0bp,86bp) -- (15bp,74bp) -- (39bp,74bp) -- cycle;
\draw (27bp,90bp) node {$111$};
\end{scope}
\begin{scope}
\draw (126bp,86bp) -- (121bp,101bp) -- (99bp,108bp) -- (77bp,101bp) -- (72bp,86bp) -- (87bp,74bp) -- (111bp,74bp) -- cycle;
\draw (99bp,90bp) node {$114$};
\end{scope}
\begin{scope}
\draw (126bp,14bp) -- (121bp,29bp) -- (99bp,36bp) -- (77bp,29bp) -- (72bp,14bp) -- (87bp,2bp) -- (111bp,2bp) -- cycle;
\draw (99bp,18bp) node {$6$};
\end{scope}
\begin{scope}
\draw (486bp,14bp) -- (481bp,29bp) -- (459bp,36bp) -- (437bp,29bp) -- (432bp,14bp) -- (447bp,2bp) -- (471bp,2bp) -- cycle;
\draw (459bp,18bp) node {$5$};
\end{scope}
\begin{scope}
\draw (198bp,86bp) -- (193bp,101bp) -- (171bp,108bp) -- (149bp,101bp) -- (144bp,86bp) -- (159bp,74bp) -- (183bp,74bp) -- cycle;
\draw (171bp,90bp) node {$123$};
\end{scope}
\begin{scope}
\draw (477bp,86bp) -- (472bp,101bp) -- (450bp,108bp) -- (428bp,101bp) -- (423bp,86bp) -- (438bp,74bp) -- (462bp,74bp) -- cycle;
\draw (450bp,90bp) node {$122$};
\end{scope}
\begin{scope}
\draw (342bp,14bp) -- (337bp,29bp) -- (315bp,36bp) -- (293bp,29bp) -- (288bp,14bp) -- (303bp,2bp) -- (327bp,2bp) -- cycle;
\draw (315bp,18bp) node {$4$};
\end{scope}

\end{tikzpicture}
\end{figure}
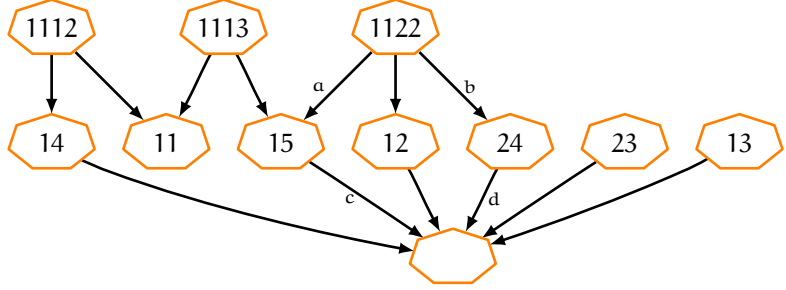
\begin{figure}
\caption{The even part of $Q_6$}\label{EvenB6}
\begin{tikzpicture}[>=latex,join=bevel,scale=.60]
\pgfsetlinewidth{1bp}
\pgfsetcolor{black}
\draw [->] (369bp,78bp) .. controls (352bp,67bp) and (325bp,49bp)  .. (297bp,30bp);
\draw (305bp,55bp) node {$_d$};
\draw [->] (307bp,74bp) .. controls (302bp,65bp) and (297bp,53bp)  .. (287bp,34bp);
\draw (215bp,55bp) node {$_c$};
\draw [->] (189bp,78bp) .. controls (206bp,67bp) and (233bp,49bp)  .. (261bp,30bp);
\draw [->] (46bp,79bp) .. controls (51bp,77bp) and (57bp,74bp)  .. (63bp,72bp) .. controls (125bp,49bp) and (201bp,32bp)  .. (255bp,22bp);
\pgfsetcolor{black}
\draw [->] (143bp,146bp) .. controls (148bp,137bp) and (153bp,125bp)  .. (163bp,106bp);
\pgfsetcolor{black}
\draw [->] (42bp,147bp) .. controls (53bp,136bp) and (66bp,123bp)  .. (85bp,104bp);
\draw (290bp,125bp) node {$_b$};
\pgfsetcolor{black}
\draw [->] (258bp,147bp) .. controls (269bp,136bp) and (282bp,123bp)  .. (301bp,104bp);
\pgfsetcolor{black}
\draw [->] (127bp,146bp) .. controls (122bp,137bp) and (117bp,125bp)  .. (107bp,106bp);
\draw [->] (251bp,74bp) .. controls (256bp,65bp) and (261bp,53bp)  .. (271bp,34bp);
\pgfsetcolor{black}
\draw [->] (27bp,146bp) .. controls (27bp,137bp) and (27bp,128bp)  .. (27bp,108bp);
\pgfsetcolor{black}
\draw [->] (439bp,80bp) .. controls (434bp,77bp) and (428bp,74bp)  .. (423bp,72bp) .. controls (385bp,55bp) and (341bp,39bp)  .. (302bp,26bp);
\draw [->] (243bp,146bp) .. controls (243bp,137bp) and (243bp,128bp)  .. (243bp,108bp);
\draw (195bp,125bp) node {$_a$};
\pgfsetcolor{black}
\draw [->] (228bp,147bp) .. controls (217bp,136bp) and (204bp,123bp)  .. (185bp,104bp);

\pgfsetcolor{orange}
\begin{scope}
\draw (126bp,86bp) -- (121bp,101bp) -- (99bp,108bp) -- (77bp,101bp) -- (72bp,86bp) -- (87bp,74bp) -- (111bp,74bp) -- cycle;
\draw (99bp,90bp) node {$11$};
\end{scope}
\begin{scope}
\draw (306bp,14bp) -- (301bp,29bp) -- (279bp,36bp) -- (257bp,29bp) -- (252bp,14bp) -- (267bp,2bp) -- (291bp,2bp) -- cycle;
\end{scope}
\begin{scope}
\draw (270bp,158bp) -- (265bp,173bp) -- (243bp,180bp) -- (221bp,173bp) -- (216bp,158bp) -- (231bp,146bp) -- (255bp,146bp) -- cycle;
\draw (243bp,162bp) node {$1122$};
\end{scope}
\begin{scope}
\draw (270bp,86bp) -- (265bp,101bp) -- (243bp,108bp) -- (221bp,101bp) -- (216bp,86bp) -- (231bp,74bp) -- (255bp,74bp) -- cycle;
\draw (243bp,90bp) node {$12$};
\end{scope}
\begin{scope}
\draw (414bp,86bp) -- (409bp,101bp) -- (387bp,108bp) -- (365bp,101bp) -- (360bp,86bp) -- (375bp,74bp) -- (399bp,74bp) -- cycle;
\draw (387bp,90bp) node {$23$};
\end{scope}
\begin{scope}
\draw (486bp,86bp) -- (481bp,101bp) -- (459bp,108bp) -- (437bp,101bp) -- (432bp,86bp) -- (447bp,74bp) -- (471bp,74bp) -- cycle;
\draw (459bp,90bp) node {$13$};
\end{scope}
\begin{scope}
\draw (342bp,86bp) -- (337bp,101bp) -- (315bp,108bp) -- (293bp,101bp) -- (288bp,86bp) -- (303bp,74bp) -- (327bp,74bp) -- cycle;
\draw (315bp,90bp) node {$24$};
\end{scope}
\begin{scope}
\draw (162bp,158bp) -- (157bp,173bp) -- (135bp,180bp) -- (113bp,173bp) -- (108bp,158bp) -- (123bp,146bp) -- (147bp,146bp) -- cycle;
\draw (135bp,162bp) node {$1113$};
\end{scope}
\begin{scope}
\draw (54bp,86bp) -- (49bp,101bp) -- (27bp,108bp) -- (5bp,101bp) -- (0bp,86bp) -- (15bp,74bp) -- (39bp,74bp) -- cycle;
\draw (27bp,90bp) node {$14$};
\end{scope}
\begin{scope}
\draw (54bp,158bp) -- (49bp,173bp) -- (27bp,180bp) -- (5bp,173bp) -- (0bp,158bp) -- (15bp,146bp) -- (39bp,146bp) -- cycle;
\draw (27bp,162bp) node {$1112$};
\end{scope}
\begin{scope}
\draw (198bp,86bp) -- (193bp,101bp) -- (171bp,108bp) -- (149bp,101bp) -- (144bp,86bp) -- (159bp,74bp) -- (183bp,74bp) -- cycle;
\draw (171bp,90bp) node {$15$};
\end{scope}

\end{tikzpicture}
\end{figure}

\begin{remark} With the labels described in
\autoref{IdentifyVertices} the quiver~$Q_n$
coincides exactly with the quiver of $\Sigma\left(W_n\right)$ calculated 
by Saliola \cite{Saliola}.
\end{remark}

\begin{lemma}\label{EdgeNotInKernel}
If $p$ and $q$ are vertices of $Q_n$ then
the images in $k\mathcal{L}_{n+1}/\ker\Delta$
of the edges from $p$ to $q$ are linearly independent.
\end{lemma}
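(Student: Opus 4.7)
The plan is to classify the possible edges from $p$ to $q$, observe that only one case admits multiple edges, and then handle that case using the Poincar\'e--Birkhoff--Witt theorem.

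First I would observe that an edge of type (Q1) strictly decreases $|p|$ by $b+c$, while edges of types (Q2) and (Q3) preserve $|p|$, so (Q1) cannot coexist with (Q2) or (Q3) between a given pair $p,q$. Within (Q1) and (Q3) the multiset $p\setminus q$ (together with the constraint $b<c$ or the pairing of two equal parts with a third distinct part) determines the relevant parameters uniquely, so each of these types produces at most one edge from $p$ to $q$; and (Q2) versus (Q3) is forced by whether the three parts of $p\setminus q$ are all distinct or not. Thus the only case requiring a genuine argument is (Q2), which gives precisely the two edges $[X_\alpha]_B,[X_\beta]_B$ appearing as the first two terms in \autoref{GreenEdge}.

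Applying \autoref{DeltaBStreet} with $U=q_0$ a leaf (hence $m=0$), with $V_1$ the length-two subtree carrying the three parts $a,b,c$, and with $V_2=q_1,\dots,V_{j+1}=q_j$ leaves (hence $r=1$), one finds
\[
\Delta[X_\gamma]_B = 2 q_0 \pi\bigl([V_\gamma q_1 q_2 \cdots q_j]_A\bigr)\qquad(\gamma\in\{\alpha,\beta\}).
\]
Since left multiplication by the nonzero letter $q_0$ in $k\mathbb{N}^\ast$ is injective, the lemma reduces to showing that $\pi([V_\alpha Q]_A)$ and $\pi([V_\beta Q]_A)$ are linearly independent in $k\mathbb{N}^\ast$, where $Q=q_1 q_2\cdots q_j$.

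To prove this I would invoke the Poincar\'e--Birkhoff--Witt isomorphism between $\mathrm{Sym}(L)$ and $U(L)=k\mathbb{N}^\ast$, where $L=\mathrm{Lie}(\mathbb{N})$ is the free Lie algebra on $\mathbb{N}$. The A-orbit sum $\pi([V_\gamma Q]_A)$ equals, up to a positive combinatorial factor determined by the multiplicities of the $q_i$ (and therefore identical for $\gamma=\alpha$ and $\gamma=\beta$), the image under PBW of the symmetric product $\pi(V_\gamma)\cdot q_1\cdots q_j\in\mathrm{Sym}^{j+1}(L)$. Since $\mathrm{Sym}(L)$ is a polynomial algebra on a basis of $L$ and the monomial $q_1\cdots q_j$ is nonzero, multiplication by it on $L$ is injective, so the independence reduces to that of $\pi(V_\alpha)$ and $\pi(V_\beta)$ in $L$. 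The direct computation
\[
\pi(V_\alpha)=[a,[b,c]]=abc-acb-bca+cba,\qquad \pi(V_\beta)=[b,[a,c]]=bac-bca-acb+cab,
\]
settles this by inspection of the coefficients of the words $abc$ and $bac$. The main obstacle I anticipate is the bookkeeping relating the A-orbit sum to the PBW symmetrization, specifically verifying that the stabilizer-dependent scalar is the same nonzero number for both $V_\alpha$ and $V_\beta$ so that linear independence passes cleanly through the isomorphism; this is where one must check carefully that possible coincidences among the $q_i$ (or between them and $a,b,c$) affect both sides identically and do not collapse the argument.
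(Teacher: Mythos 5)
Your treatment of the two-edge case (Q2) is correct, and it is genuinely different from the paper's argument: where the paper exhibits a specific word (one with $a,b,c$ contiguous and the remaining letters weakly increasing) that cannot be cancelled in $\Delta\left(e\right)$, you pass through the symmetrization form of Poincar\'e--Birkhoff--Witt for $k\mathbb{N}^\ast$ as the enveloping algebra of the free Lie algebra, reducing everything to the linear independence of $\left[a,\left[b,c\right]\right]$ and $\left[b,\left[a,c\right]\right]$; the scalar you worry about is $\left(j+1\right)!/\prod_c m_c!$ with $m_c$ the multiplicities among $q_1,\ldots,q_j$, and it depends only on that multiset because the length-two tree can never coincide with a leaf, so it is indeed the same for both edges. (A small slip: the second edge of type (Q2) is the B-orbit of the tree with children $b$ and $\left(a,c\right)$, which is not literally a term of \autoref{GreenEdge} but its negative modulo $\ker\Delta$; your computation nevertheless uses the correct pair of trees.)

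The gap is the sentence claiming that only (Q2) requires a genuine argument. Knowing that there is at most one edge of type (Q1) or (Q3) from $p$ to $q$ does not dispose of those cases: linear independence of a one-element family is exactly the statement that the edge has nonzero image in $k\mathcal{L}_{n+1}/\ker\Delta$, and this nonvanishing is the bulk of the lemma's content --- it is what most of the paper's proof is devoted to, and what the proof of \autoref{ExtQuiver} relies on when it asserts that no nonzero combination of edges lies in $\ker\Delta$. You never verify it for (Q1) or (Q3). The omission is easy to repair inside your own framework: for (Q1), \autoref{GreenArrow} gives $\Delta\left(e\right)=2a\left[b,c\right]\pi\left(\left[q_1\cdots q_j\right]_A\right)$, a product of nonzero elements of the free associative algebra $k\mathbb{N}^\ast$, which has no zero divisors, and this is nonzero since $b<c$; for (Q3) your PBW reduction applies verbatim to the single edge and leaves you with $\left[a,\left[a,b\right]\right]\ne 0$ (resp.\ $\left[a,\left[b,a\right]\right]\ne 0$), true because $a\ne b$. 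As written, however, the proof establishes the lemma only for those pairs $p,q$ joined by two edges of type (Q2).
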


\begin{proof}
It is easy to check that if $Q_n$ has any edges from $p$ to $q$
then they must all have the same type~\hyperref[BlueEdge]{(Q1)},
\hyperref[GreenEdge1]{(Q2)},
or \hyperref[GreenEdge2]{(Q3)}. Therefore, to prove the assertion
it suffices to verify that the image of each edge of $Q_n$
is nonzero, and also that 
the images of two edges with the same source and destination
of type~\hyperref[GreenEdge1]{(Q2)} are linearly independent.

Let $e$ be an edge of $Q_n$. If $e$
is of type \hyperref[BlueEdge]{(Q1)} then $\Delta\left(e\right)\ne 0$
by \autoref{GreenArrow}. Otherwise suppose that
$e$ is of type \hyperref[GreenEdge1]{(Q2)}
or \hyperref[GreenEdge2]{(Q3)}. Then
$e=\left[\vcenter{\begin{xy}<.2cm,0cm>:
(1,3)*+!UR{\color{red}q_0};
(3,3)="1"*+!U{_1};
"1";(2,2)*+!U{\color{red}a}**\dir{-};
"1";(5,2)="2"*+!U{_2}**\dir{-};
"2";(4,1)*+!U{\color{red}b}**\dir{-};
"2";(6,1)*+!U{\color{red}c}**\dir{-};
(7,3)*+!UL{{\color{red}q_1q_2}\cdots{\color{red}q_j}};
\end{xy}}\right]_B$
for some $a,b,c,q_0,q_1,\ldots,q_j\in\mathbb{N}$ and
\[\Delta\left(e\right)=
2{\color{red}q_0}\pi\left[\vcenter{\begin{xy}<.2cm,0cm>:
(2,3)="1"*+!U{_1};
"1";(1,2)*+!U{\color{red}a}**\dir{-};
"1";(4,2)="2"*+!U{_2}**\dir{-};
"2";(3,1)*+!U{\color{red}b}**\dir{-};
"2";(5,1)*+!U{\color{red}c}**\dir{-};
(6,3)*+!UL{{\color{red}q_1q_2}\cdots{\color{red}q_j}};
\end{xy}}\right]_A
=2{\color{red}q_0}\left(Y-Z\right)\]
by \autoref{DeltaBStreet}
where $Y$ is the sum of all rearrangements of
${\color{red}abcq_1q_2}\cdots{\color{red}q_j}$
in which the letters $a,b,c$ appear contiguously
as $\color{red}abc$ or $\color{red}cba$
and $Z$ is the sum of all rearrangements of
${\color{red}abcq_1q_2}\cdots{\color{red}q_j}$
in which $a,b,c$ appear contiguously
as $\color{red}acb$ or $\color{red}bca$.
Assuming that $q_1\le q_2\le\cdots\le q_j$ let $0\le i\le j$
be such that
\[q_1\le\cdots\le q_i\le b<q_{i+1}\le\cdots\le q_j.\]
If $a\le b$ then the letters of the term
${\color{red}q_1}\cdots{\color{red}q_iabcq_{i+1}}\cdots{\color{red}q_k}$
of $Y$ are non-decreasing from left to right except possibly at the 
segments $\color{red}q_ia$ and $\color{red}cq_{i+1}$.
But all the terms of $Z$ contain deceasing segments
$\color{red}cb$ or $\color{red}ca$, neither of which equals
$\color{red}q_ia$ or $\color{red}cq_{i+1}$. This shows that
${\color{red}q_1}\cdots{\color{red}q_iabcq_{i+1}}\cdots{\color{red}q_k}$
cannot be canceled by a term of $Z$ so that $\Delta\left(e\right)\ne 0$.

In case $b<a$ we argue similarly that the term
${\color{red}q_1}\cdots{\color{red}q_ibcaq_{i+1}}\cdots{\color{red}q_k}$
of $Z$ has possible decreasing segments
$\color{red}ca$ and $\color{red}aq_{i+1}$ and can thereby
not be canceled by any term of $Y$, all of which have decreasing
segments $\color{red}ab$ or $\color{red}cb$.

Finally, if $e$ is of type~\hyperref[GreenEdge1]{(Q2)}
with $a<b<c$ then let
$f=\left[\vcenter{\begin{xy}<.2cm,0cm>:
(1,3)*+!UR{\color{red}q_0};
(3,3)="1"*+!U{_1};
"1";(2,2)*+!U{\color{red}b}**\dir{-};
"1";(5,2)="2"*+!U{_2}**\dir{-};
"2";(4,1)*+!U{\color{red}a}**\dir{-};
"2";(6,1)*+!U{\color{red}c}**\dir{-};
(7,3)*+!UL{{\color{red}q_1q_2}\cdots{\color{red}q_j}};
\end{xy}}\right]_B$.
Then the images of $e$ and
$f$ are linearly independent since at least one term of
$\Delta\left(e\right)$ contains the segment $\color{red}abc$
while at least one term of $\Delta\left(f\right)$
does {\em not} contain this segment.
\end{proof}

\section{Admissibility and right alignment}\label{AdmissibleSection}
A forest is called {\em aligned} if each of its nodes
$Z$ satisfies $\overline{Z_1}<\overline{Z_2}$.
Aligned forests play an important role in the
quiver presentation of $\Sigma\left(\mathfrak{S}_n\right)$.
The property in type~$B$ corresponding
with alignment is {\em right alignment}.
The purpose of this section is define right alignment for
trees and describe how arbitrary trees can be expressed modulo $\ker\pi$
as linear combinations of right aligned trees.
We continue the program for forests in \autoref{RightAlignSection}.

Analogous to alignment in type~A the purpose of
right alignment in type~B is twofold. First,
since the edges of the quiver $Q_n$ defined in \autoref{QuiverSection}
are represented by right aligned forests
we need to be able to express arbitrary elements of $k\mathcal{L}_{n+1}$
in the same way in order to verify that $Q_n$ is the quiver
of $\Sigma\left(W_n\right)$. The second purpose
is practical. Namely, the proposed relations derived from the Jacobi relation
in \autoref{ConjectureSection}
are never right aligned. The results of this section
are therefore needed to transfer such relations to $kQ_n$.
In fact the relations proposed in \autoref{ConjectureSection}
resulted from computer experiments using
programs based on the proofs of the lemmas in this section.

We define the {\em parity} of a tree $V$ of positive length to be the pair
\[\left(\rule{0pt}{11pt}\ell\left(V_1\right)\pmod{2},
\ell\left(V_2\right)\pmod{2}\right).\]
We say that a tree $V$ is {\em admissible} 
if no node of $V$ has parity $\left(1,1\right)$.
We say that $V$ is {\em right aligned} if 
\begin{enumerate}
\item\label{RA1} no node of $V$ has parity $\left(1,1\right)$ or $\left(1,0\right)$,
\item\label{RA2} each node $Z$ of $V$ of parity $\left(0,0\right)$
satisfies $\overline{Z_1}<\overline{Z_2}$, and
\item\label{RA3} each node $Z$ of $V$ of parity $\left(0,1\right)$ satisfies
$\overline{Z_1}\le\overline{Z_{22}}$.
\end{enumerate}
Since \autoref{RA3} implies that
$\overline{Z_1}\le\overline{Z_{22}}<\overline{Z_{21}}+\overline{Z_{22}}=\overline{Z_2}$
it follows from \autoref{RA1} and \autoref{RA2}
that a right aligned tree is admissible and aligned.
However, the converse need not hold, since 
an aligned admissible tree $V$ need not satisfy \autoref{RA3}.

\begin{lemma}\label{OddLemma}
Let $V$ be an unlabeled tree of odd length. Then
there exist unlabeled trees $U_1,U_2,\ldots,U_p$
and integers $\alpha_1,\alpha_2,\ldots,\alpha_p$ such that
$V\equiv\sum_{i=1}^p\alpha_iU_i\pmod{\ker\pi}$
where $U_{i1},U_{i2}$ have even length 
and satisfy $\overline{U_{i1}}<\overline{U_{i2}}$
for all $1\le i\le p$.
\end{lemma}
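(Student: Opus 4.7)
The plan is to induct on $\ell(V)$.

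For the base case $\ell(V)=1$, both children of $V$ are leaves and hence of even length. If $\overline{V_1}<\overline{V_2}$ then $V$ itself is in the required form. If $\overline{V_1}>\overline{V_2}$ then by \autoref{DeltaRho} we have $V\equiv-\overleftarrow{V}\pmod{\ker\pi}$ and $\overleftarrow{V}$ is in the required form. If $\overline{V_1}=\overline{V_2}$ then the two leaves coincide, $\pi\left(V\right)=0$, and we take the empty sum.

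For the inductive step, since $\ell(V_1)+\ell(V_2)=\ell(V)-1$ is even, the children $V_1$ and $V_2$ have the same parity. The basic tool beyond \autoref{DeltaRho} is the Jacobi identity in the free Lie algebra, which via $\pi$ yields the tree-rewriting identity
\[
\pi\bigl(A\circ(B\circ C)\bigr)=\pi\bigl((A\circ B)\circ C\bigr)+\pi\bigl(B\circ(A\circ C)\bigr)
\]
where $X\circ Y$ denotes the tree with left child $X$ and right child $Y$.

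If $V$ has parity $(1,1)$, I apply the inductive hypothesis to $V_2$ (of odd length strictly less than $\ell(V)$), writing $V_2\equiv\sum_i\alpha_iU_i\pmod{\ker\pi}$ with each $U_i$ of parity $(0,0)$ and $\overline{U_{i1}}<\overline{U_{i2}}$. Bilinearity of $\pi$ in the two children then gives $V\equiv\sum_i\alpha_i(V_1\circ U_i)\pmod{\ker\pi}$, and a single Jacobi move at the root of each $U_i$ rewrites each summand as
\[
V_1\circ(U_{i1}\circ U_{i2})\equiv(V_1\circ U_{i1})\circ U_{i2}+U_{i1}\circ(V_1\circ U_{i2}).
\]
A direct parity check, using $\ell\left(V_1\right)$ odd and $\ell\left(U_{i1}\right),\ell\left(U_{i2}\right)$ even, shows both resulting summands have parity $(0,0)$, reducing to the next case. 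If $V$ has parity $(0,0)$ and $\overline{V_1}\ne\overline{V_2}$, then \autoref{DeltaRho} handles the required squash inequality with at most one sign change.

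The main obstacle is the residual case of parity $(0,0)$ with $\overline{V_1}=\overline{V_2}$ but $V_1\ne V_2$: antisymmetry is ineffective, and a Jacobi rewrite at a positive-length child produces one parity-$(0,0)$ summand together with one parity-$(1,1)$ summand of the same length as $V$, so induction on $\ell(V)$ alone does not immediately close. I expect to resolve this by inducting on a lexicographically refined measure, such as the pair consisting of $\ell(V)$ together with the depth of the deepest parity-$(1,1)$ node in $V$, and verifying that each Jacobi move strictly decreases this measure; choosing and justifying this refined measure is the most delicate step of the argument.
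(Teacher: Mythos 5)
Your strategy (induction on length, antisymmetry via \autoref{DeltaRho}, Jacobi rewrites) is the same as the paper's, but the proof as written has a genuine gap, and it is exactly the case you isolate at the end: parity $\left(0,0\right)$ with $\overline{V_1}=\overline{V_2}$. Deferring it to an induction on a ``lexicographically refined measure'' is not a proof, and the specific measure you suggest (depth of the deepest parity-$\left(1,1\right)$ node) does not obviously work: writing $X\circ Y$ for the tree with left child $X$ and right child $Y$, a tree such as $4\circ\left(1\circ\left(1\circ 2\right)\right)$ lies in the problematic case yet contains no parity-$\left(1,1\right)$ node at all, while a Jacobi rewrite produces a parity-$\left(1,1\right)$ summand of the same length, so the quantity you propose can be created rather than decreased. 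The gap also contaminates your parity-$\left(1,1\right)$ case: of the two summands $\left(V_1\circ U_{i1}\right)\circ U_{i2}$ and $U_{i1}\circ\left(V_1\circ U_{i2}\right)$, the second already satisfies the required root condition (since $\overline{U_{i1}}<\overline{U_{i2}}<\overline{V_1}+\overline{U_{i2}}$), but the first can have equal squashes at the root, so it lands back in the unresolved case.

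The paper closes this case without any refined measure, exploiting the fact that the lemma only constrains the root. After normalizing so that $\overline{V_1}\le\overline{V_2}$, and (in the even-children case with equality) so that $\ell\left(V_{22}\right)$ is odd, it applies the inductive hypothesis to $V_{22}$ to put $V$ in the form $A\circ\left(B\circ\left(C\circ D\right)\right)$ with $A,B,C,D$ of even length and $\overline{A}=\overline{B}+\overline{C}+\overline{D}$. Two applications of the Jacobi identity then rewrite $V$ modulo $\ker\pi$ as a combination of $D\circ\left(C\circ\left(B\circ A\right)\right)$, $C\circ\left(D\circ\left(B\circ A\right)\right)$ and $B\circ\left(\left(C\circ D\right)\circ A\right)$; each of these has even children and a strictly smaller squash on the left, because each of $\overline{B},\overline{C},\overline{D}$ is strictly less than $\overline{A}$. (Similarly, in the odd-children case the paper applies induction to $V_1$ rather than $V_2$ and chooses the Jacobi move so that every resulting left child is one of $X_i,Y_i$, whose squash is strictly below $\overline{V_1}\le\overline{V_2}$; with that choice no equal-squash residue ever arises.) So the missing idea is not a cleverer induction measure but a choice of Jacobi rewrites, after the initial normalization, that places a strictly smaller subtree on the left of every summand in one step.
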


\begin{proof}
We can assume that no node $Z$
of $V$ satisfies $Z_1=Z_2\in\mathbb{Z}$ since otherwise
$V\in\ker\pi$ and the assertion holds trivially.
By replacing $V$ with
$-\vcenter{\begin{xy}<.3cm,0cm>:
(0,1)="1";
"1";(-1,0)*+!U{V_2}**\dir{-};
"1";(1,0)*+!U{V_1}**\dir{-};
\end{xy}}$ if necessary, we can also assume that $V$ satisfies
$\overline{V_1}\le\overline{V_2}$.
Now if $\ell\left(V\right)=1$ then $V_1,V_2$ are leaves so that
$V_1<V_2$ by the assumptions above.
Otherwise suppose that $\ell\left(V\right)\ge 3$
and that the assertion holds for trees
of odd length less than $\ell\left(V\right)$.
Observe that $\ell\left(V\right)=1+\ell\left(V_1\right)+\ell\left(V_2\right)$
so that $\ell\left(V_1\right),\ell\left(V_2\right)$
are both odd or both even. We consider these cases separately.

If $\ell\left(V_1\right),\ell\left(V_2\right)$ are both odd,
then by applying induction to $V_1$ we have integers $\alpha_i$
and unlabeled trees $X_i,Y_i$ of even length 
such that $\overline{X_i}<\overline{Y_i}$ for all $i$ and
$V_1\equiv\sum_i\alpha_i
\vcenter{\begin{xy}<.2cm,0cm>:
(2,2)="1";
"1";(1,1)*+!U{X_i}**\dir{-};
"1";(3,1)*+!U{Y_i}**\dir{-};
\end{xy}}$.
Then
\[V\equiv\sum_i-\alpha_i
\left(
\vcenter{\begin{xy}<.3cm,0cm>:
(4,3)="1";
"1";(2,2)="2"**\dir{-};
"2";(1,1)*+!U{V_2}**\dir{-};
"2";(3,1)*+!U{X_i}**\dir{-};
"1";(5,2)*+!U{Y_i}**\dir{-};
\end{xy}}
+\vcenter{\begin{xy}<.3cm,0cm>:
(4,3)="1";
"1";(2,2)="2"**\dir{-};
"2";(1,1)*+!U{Y_i}**\dir{-};
"2";(3,1)*+!U{V_2}**\dir{-};
"1";(5,2)*+!U{X_i}**\dir{-};
\end{xy}}
\right)\equiv\sum_i\alpha_i\left(
-\vcenter{\begin{xy}<.3cm,0cm>:
(2,3)="1";
"1";(1,2)*+!U{Y_i}**\dir{-};
"1";(4,2)="2"**\dir{-};
"2";(3,1)*+!U{X_i}**\dir{-};
"2";(5,1)*+!U{V_2}**\dir{-};
\end{xy}}
+\vcenter{\begin{xy}<.3cm,0cm>:
(2,3)="1";
"1";(1,2)*+!U{X_i}**\dir{-};
"1";(4,2)="2"**\dir{-};
"2";(3,1)*+!U{Y_i}**\dir{-};
"2";(5,1)*+!U{V_2}**\dir{-};
\end{xy}}\right)\]
expresses $V$ as a linear combination of trees $U$
with $\ell\left(U_1\right),\ell\left(U_2\right)$ even
and $\overline{U_1}<\overline{U_2}$.

Otherwise suppose that $\ell\left(V_1\right),\ell\left(V_2\right)$ are even.
If $\overline{V_1}<\overline{V_2}$ then we have nothing to do.
We assume therefore that $\overline{V_1}=\overline{V_2}$.
By replacing $V$ with
$-\vcenter{\begin{xy}<.3cm,0cm>:
(0,1)="1";
"1";(-1,0)*+!U{V_2}**\dir{-};
"1";(1,0)*+!U{V_1}**\dir{-};
\end{xy}}$ if necessary,
we can also assume that $\ell\left(V_2\right)\ge 2$.
Since $\ell\left(V_2\right)$ is even, one of its children
has odd length, and again 
by replacing $V_2$ with
$-\vcenter{\begin{xy}<.3cm,0cm>:
(0,1)="1";
"1";(-1,0)*+!U{V_{22}}**\dir{-};
"1";(1,0)*+!U{V_{21}}**\dir{-};
\end{xy}}$ if necessary,
we can assume that $\ell\left(V_{22}\right)$
is odd. Applying induction to $V_{22}$ we can assume
that $V_{221}$ and $V_{222}$ have even length. We denote 
$V_1,V_{21},V_{221},V_{222}$ by $A,B,C,D$ so that
$V=\vcenter{\begin{xy}<.2cm,0cm>:
(2,4)="1";
"1";(1,3)*+!U{A}**\dir{-};
"1";(4,3)="2"**\dir{-};
"2";(3,2)*+!U{B}**\dir{-};
"2";(6,2)="3"**\dir{-};
"3";(5,1)*+!U{C}**\dir{-};
"3";(7,1)*+!U{D}**\dir{-};
\end{xy}}$.
Note that $A,B,C,D$ have even length and
$\overline{A}=\overline{B}+\overline{C}+\overline{D}$.
Then by applying the Jacobi identity twice we find that
\begin{align*}
\vcenter{\begin{xy}<.2cm,0cm>:
(2,4)="1";
"1";(1,3)*+!U{A}**\dir{-};
"1";(4,3)="2"**\dir{-};
"2";(3,2)*+!U{B}**\dir{-};
"2";(6,2)="3"**\dir{-};
"3";(5,1)*+!U{C}**\dir{-};
"3";(7,1)*+!U{D}**\dir{-};
\end{xy}}
&\equiv
-\vcenter{\begin{xy}<.2cm,0cm>:
(4,3)="1";
"1";(2,2)="2"**\dir{-};
"2";(1,1)*+!U{C}**\dir{-};
"2";(3,1)*+!U{D}**\dir{-};
"1";(6,2)="3"**\dir{-};
"3";(5,1)*+!U{A}**\dir{-};
"3";(7,1)*+!U{B}**\dir{-};
\end{xy}}
-\vcenter{\begin{xy}<.2cm,0cm>:
(2,4)="1";
"1";(1,3)*+!U{B}**\dir{-};
"1";(6,3)="2"**\dir{-};
"2";(4,2)="3"**\dir{-};
"3";(3,1)*+!U{C}**\dir{-};
"3";(5,1)*+!U{D}**\dir{-};
"2";(7,2)*+!U{A}**\dir{-};
\end{xy}}\\
&\equiv
-\vcenter{\begin{xy}<.2cm,0cm>:
(2,4)="1";
"1";(1,3)*+!U{D}**\dir{-};
"1";(4,3)="2"**\dir{-};
"2";(3,2)*+!U{C}**\dir{-};
"2";(6,2)="3"**\dir{-};
"3";(5,1)*+!U{B}**\dir{-};
"3";(7,1)*+!U{A}**\dir{-};
\end{xy}}
+\vcenter{\begin{xy}<.2cm,0cm>:
(2,4)="1";
"1";(1,3)*+!U{C}**\dir{-};
"1";(4,3)="2"**\dir{-};
"2";(3,2)*+!U{D}**\dir{-};
"2";(6,2)="3"**\dir{-};
"3";(5,1)*+!U{B}**\dir{-};
"3";(7,1)*+!U{A}**\dir{-};
\end{xy}}
-\vcenter{\begin{xy}<.2cm,0cm>:
(2,4)="1";
"1";(1,3)*+!U{B}**\dir{-};
"1";(6,3)="2"**\dir{-};
"2";(4,2)="3"**\dir{-};
"3";(3,1)*+!U{C}**\dir{-};
"3";(5,1)*+!U{D}**\dir{-};
"2";(7,2)*+!U{A}**\dir{-};
\end{xy}}
\end{align*}
expresses $V$ as a linear combination of trees $U$
with $\ell\left(U_1\right),\ell\left(U_2\right)$ even
and $\overline{U_1}<\overline{U_2}$.
\end{proof}

\begin{lemma}\label{EvenLemma}
Any unlabeled tree of even length is congruent modulo $\ker\pi$ to a linear
combination of right aligned trees.
\end{lemma}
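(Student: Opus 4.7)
The plan is induction on $\ell(V)$, with the base case $\ell(V)=0$ trivial since a leaf is right aligned. For the inductive step $\ell(V)\ge 2$, write $V$ with children $V_1$ and $V_2$. Since $\ell(V)$ is even and positive, $\ell(V_1)+\ell(V_2)$ is odd, so the root of $V$ has parity $(0,1)$ or $(1,0)$. Using that swapping children at the root introduces a sign modulo $\ker\pi$, I would reduce to the case in which the root has parity $(0,1)$, i.e.\ $\ell(V_1)$ is even and $\ell(V_2)$ is odd.

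Next I would apply the inductive hypothesis to $V_1$, writing it modulo $\ker\pi$ as a linear combination of right aligned trees of even length. I would then apply \autoref{OddLemma} to $V_2$ to express it as a linear combination of trees whose two children have even length and strictly increasing squash; the inductive hypothesis applied to each such child makes it right aligned. Assembling these ingredients, the problem reduces to showing that whenever $A$, $B$, $C$ are right aligned trees of even length with $\overline{B}<\overline{C}$, the tree $T(A;B,C)$ whose root has left subtree $A$ and whose right subtree has left and right children $B$ and $C$ is congruent modulo $\ker\pi$ to a linear combination of right aligned trees.

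If $\overline{A}\le\overline{C}$, then $T(A;B,C)$ is already right aligned: its root has parity $(0,1)$ with $\overline{A}\le\overline{C}$, its inner node has parity $(0,0)$ with $\overline{B}<\overline{C}$, and $A$, $B$, $C$ are right aligned by assumption. Otherwise $\overline{A}>\overline{C}$, and hence $\overline{B}<\overline{C}<\overline{A}$. In this case I would invoke the Jacobi identity
\[
T(A;B,C) \equiv -T(B;C,A) - T(C;A,B) \pmod{\ker\pi},
\]
and then apply antisymmetry to the inner node of the second summand, which flips a sign because $\overline{A}>\overline{B}$, yielding
\[
T(A;B,C) \equiv -T(B;C,A) + T(C;B,A) \pmod{\ker\pi}.
\]
The strict chain $\overline{B}<\overline{C}<\overline{A}$ then directly verifies every right alignment condition on both $T(B;C,A)$ and $T(C;B,A)$.

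The main obstacle is that the Jacobi rewrite does not reduce $\ell(V)$, so this step cannot itself be absorbed into the induction on length. The argument succeeds only because the total order $\overline{B}<\overline{C}<\overline{A}$ forces the pair of trees produced by one Jacobi plus one inner antisymmetry directly into right aligned form, without triggering any further recursion.
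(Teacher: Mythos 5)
Your proof is correct and follows essentially the same route as the paper: reduce the root to parity $\left(0,1\right)$ by antisymmetry, apply \autoref{OddLemma} to $V_2$, use induction on length for the subtrees, and when the root condition $\overline{V_1}\le\overline{V_{22}}$ fails apply the Jacobi identity plus one inner antisymmetry, which is exactly the paper's rewrite \autoref{RightAlign1}. The only differences are cosmetic (you invoke the inductive hypothesis on the subtrees before the rewrite rather than after, and you spell out the alignment checks the paper leaves implicit).
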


\begin{proof}
Let $V$ be an unlabeled tree of even length.
If $\ell\left(V\right)=0$ then we have nothing to do.
Otherwise suppose that $\ell\left(V\right)\ge 2$.
By replacing $V$ with
$-\vcenter{\begin{xy}<.3cm,0cm>:
(0,1)="1";
"1";(-1,0)*+!U{V_2}**\dir{-};
"1";(1,0)*+!U{V_1}**\dir{-};
\end{xy}}$ if necessary, we can assume that
$\ell\left(V_1\right)$ is even and $\ell\left(V_2\right)$ is odd.
By applying \autoref{OddLemma} to $V_2$ we can assume that
$V_2=\vcenter{\begin{xy}<.2cm,0cm>:
(0,1)="1";
"1";(-1,0)*+!U{A}**\dir{-};
"1";(1,0)*+!U{B}**\dir{-};
\end{xy}}$ where $\ell\left(A\right),\ell\left(B\right)$
are even and $\overline{A}<\overline{B}$.
Observe that
\begin{equation}\label{RightAlign1}
\vcenter{\begin{xy}<.3cm,0cm>:
(2,3)="1";
"1";(1,2)*+!U{V_1}**\dir{-};
"1";(4,2)="2"**\dir{-};
"2";(3,1)*+!U{A}**\dir{-};
"2";(5,1)*+!U{B}**\dir{-};
\end{xy}}
\equiv
\vcenter{\begin{xy}<.3cm,0cm>:
(2,3)="1";
"1";(1,2)*+!U{B}**\dir{-};
"1";(4,2)="2"**\dir{-};
"2";(3,1)*+!U{A}**\dir{-};
"2";(5,1)*+!U{V_1}**\dir{-};
\end{xy}}
-\vcenter{\begin{xy}<.3cm,0cm>:
(2,3)="1";
"1";(1,2)*+!U{A}**\dir{-};
"1";(4,2)="2"**\dir{-};
"2";(3,1)*+!U{B}**\dir{-};
"2";(5,1)*+!U{V_1}**\dir{-};
\end{xy}}
\end{equation}
so we can replace $V$ with the right hand side of \autoref{RightAlign1}
in case $\overline{V_1}>\overline{B}$.
Then applying induction to $V_1,A,B$ expresses $V$
as a linear combination of right aligned trees.
\end{proof}

Finally, we will need a slightly stronger notion of right alignment
in the factorization procedure introduced
in \autoref{ProofQuiverSection}. A right aligned tree is called
{\em strongly right aligned}
if its nodes $Z$ of parity $\left(0,1\right)$ satisfy
\begin{enumerate}
\item
$\overline{Z_1}\ne\overline{Z_{21}}$ unless
$Z_1$ and $Z_{21}$ are both leaves, and
\item
$\overline{Z_{1}}\ne\overline{Z_{22}}$
unless $Z_1$ and $Z_{22}$ are both leaves.
\end{enumerate}

\begin{lemma}\label{StronglyAligned}
Any unlabeled tree of even length
is congruent modulo $\ker\pi$ to a linear combination of
strongly right aligned trees.
\end{lemma}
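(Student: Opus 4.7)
The plan is to bootstrap from \autoref{EvenLemma}, which already reduces any unlabeled tree of even length modulo $\ker\pi$ to a linear combination of right aligned trees. So I may assume at the outset that $V$ itself is right aligned, and the task reduces to further rewriting a right aligned $V$ as a linear combination of right aligned trees in which the additional strict inequalities at parity $(0,1)$ nodes also hold.

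I would proceed by induction on $\ell(V)$. The base case $\ell(V)=0$ is trivial. For the inductive step, right alignment forces the root of $V$ to have parity $(0,1)$, so $V_1$ has even length and $V_2$ is an inner node with children $A,B$ of even length satisfying $\overline{A}<\overline{B}$, while $\overline{V_1}\le\overline{B}$. Applying the inductive hypothesis separately to each of $V_1$, $A$, $B$ replaces them, modulo $\ker\pi$, by linear combinations of strongly right aligned trees, so I may further assume that $V_1, A, B$ are themselves strongly right aligned. Any remaining violation of the strong condition then occurs at the root of $V$: either $\overline{V_1}=\overline{B}$ with at least one of $V_1,B$ a non-leaf, or $\overline{V_1}=\overline{A}$ with at least one of $V_1,A$ a non-leaf.

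To eliminate such a root violation I would apply the Jacobi identity at the root, replacing $[V_1,[A,B]]$ by $[A,[V_1,B]]+[B,[A,V_1]]$, and then re-right-align each of the two resulting trees via \autoref{EvenLemma}. The point is that the non-leaf subtree participating in the equality (either $V_1$, $A$, or $B$) can be split using \autoref{OddLemma}, after which a Jacobi rewrite one level deeper allows the offending equality of squashes to be replaced by equalities involving strictly smaller pieces. Iterating this procedure together with the inductive hypothesis applied to the smaller subtrees produced should terminate in a combination of strongly right aligned trees.

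The main obstacle is bookkeeping: a single Jacobi rewrite need not decrease $\ell(V)$ and can in principle produce new right aligned trees whose roots still violate the strong condition. One therefore has to introduce a well-founded measure, such as the lexicographic pair consisting of $\ell(V)$ together with a complexity such as $\min\bigl(\ell(V_1),\ell(B)\bigr)$ (or $\min\bigl(\ell(V_1),\ell(A)\bigr)$) at the offending root node, and verify that each Jacobi rewrite followed by a re-application of \autoref{EvenLemma} strictly decreases this measure while preserving right alignment. This is essentially the same delicate tracking of parities and squashes carried out in the proof of \autoref{EvenLemma}, now refined so as to exclude the borderline equalities that separate right alignment from strong right alignment.
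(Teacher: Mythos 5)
Your setup coincides with the paper's: invoke \autoref{EvenLemma} to assume $V$ is right aligned, induct on $\ell\left(V\right)$, make $V_1,V_{21},V_{22}$ strongly right aligned by induction, and observe that any remaining violation of the strong condition sits at the root, where $\overline{V_1}=\overline{V_{21}}$ or $\overline{V_1}=\overline{V_{22}}$ with a non-leaf involved. The gap is in the step that actually removes this root violation. Your concrete proposal, a single Jacobi rewrite at the root
$\left[V_1,\left[A,B\right]\right]\equiv\left[A,\left[V_1,B\right]\right]+\left[B,\left[A,V_1\right]\right]$
followed by re-alignment and an induction on a measure such as $\left(\ell\left(V\right),\min\left(\ell\left(V_1\right),\ell\left(A\right)\right)\right)$, does not terminate as stated: in the case $\overline{V_1}=\overline{A}$ (with $A=V_{21}$, $B=V_{22}$) the first term $\left[A,\left[V_1,B\right]\right]$ is again right aligned, has the same length, exhibits exactly the same violation $\overline{Z_1}=\overline{Z_{21}}$ at its root, and has the same value of $\min\left(\ell\left(Z_1\right),\ell\left(Z_{21}\right)\right)$, so your measure does not strictly decrease; meanwhile the second term $\left[B,\left[A,V_1\right]\right]$ contains a parity-$\left(0,0\right)$ node with equal squashes, and re-aligning it sends you back through the equal-squash case of \autoref{OddLemma}, which itself generates further Jacobi rewrites. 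So the "bookkeeping obstacle" you flag is not mere bookkeeping: it is the mathematical core of the lemma, and it is left unresolved. (A smaller inaccuracy: \autoref{OddLemma} applies to odd-length trees, so it cannot be used directly to split the even-length subtree $V_1$, $A$ or $B$; the needed decomposition $V_1=\left[A,\left[B,C\right]\right]$ with $A,B,C$ even comes instead from the strong right alignment of $V_1$ supplied by the inductive hypothesis.)

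What the paper does at this point is a specific, different rewrite. Assuming (say) $\ell\left(V_1\right)>0$ and $V_1$ strongly right aligned, it writes $V_1=\left[A,\left[B,C\right]\right]$ with $A,B,C$ of even length and applies the Jacobi identity \emph{twice} in the particular pattern of \autoref{MainJacobi}, obtaining four trees of the shape $\left[R,\left[S,\left[T,\left[D,E\right]\right]\right]\right]$ in which the squashes appearing in the top two levels are pairwise distinct and correctly ordered; the only part of each term not yet known to be strongly right aligned is the subtree in position $22$, which has strictly smaller length, so the ordinary induction on $\ell$ closes the argument (with the case $\ell\left(V_{21}\right)>0$ or $\ell\left(V_{22}\right)>0$ treated symmetrically, and likewise the violation $\overline{V_1}=\overline{V_{22}}$). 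To complete your proof you would need to replace the single root Jacobi and the unverified measure by such an explicit identity whose output is controlled, which is precisely the content your sketch omits.
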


\begin{proof}
Let $V$ be an unlabeled tree of even length.
We can assume by \autoref{EvenLemma} that $V$ is right aligned.
If all of $\overline{V_1},\overline{V_{12}},\overline{V_{22}}$ are distinct
then the result follows by applying induction to $V_1,V_{12},V_{22}$.
Similarly, if $\overline{V_1}=\overline{V_{12}}$ but $V_1,V_{12}$ are both
leaves, or if $\overline{V_1}=\overline{V_{22}}$ but $V_1,V_{22}$ are both
leaves, then the result follows by induction.

Suppose $\overline{V_1}=\overline{V_{12}}$ but one of $V_1,V_{12}$ has positive length. If $\ell\left(V_1\right)>0$ we will assume by induction
that $V_1,V_{21},V_{22}$ satisfy the assertion and let
$A,B,C,D,E$ be implicitly defined in the following formula. Writing
\begin{align}\label{MainJacobi}
\nonumber
V=\vcenter{\begin{xy}<.2cm,0cm>:
(6,4)="1";
"1";(2,3)="2"**\dir{-};
"2";(1,2)*+!U{A}**\dir{-};
"2";(4,2)="3"**\dir{-};
"3";(3,1)*+!U{B}**\dir{-};
"3";(5,1)*+!U{C}**\dir{-};
"1";(8,3)="4"**\dir{-};
"4";(7,2)*+!U{D}**\dir{-};
"4";(9,2)*+!U{E}**\dir{-};
\end{xy}}
&\equiv-\vcenter{\begin{xy}<.2cm,0cm>:
(6,4)="1";
"1";(4,3)="2"**\dir{-};
"2";(2,2)="3"**\dir{-};
"3";(1,1)*+!U{D}**\dir{-};
"3";(3,1)*+!U{E}**\dir{-};
"2";(5,2)*+!U{A}**\dir{-};
"1";(8,3)="2"**\dir{-};
"2";(7,2)*+!U{B}**\dir{-};
"2";(9,2)*+!U{C}**\dir{-};
\end{xy}}
-\vcenter{\begin{xy}<.2cm,0cm>:
(8,4)="1";
"1";(4,3)="2"**\dir{-};
"2";(2,2)="3"**\dir{-};
"3";(1,1)*+!U{B}**\dir{-};
"3";(3,1)*+!U{C}**\dir{-};
"2";(6,2)="4"**\dir{-};
"4";(5,1)*+!U{D}**\dir{-};
"4";(7,1)*+!U{E}**\dir{-};
"1";(9,3)*+!U{A}**\dir{-};
\end{xy}}\\
\nonumber&\equiv\vcenter{\begin{xy}<.2cm,0cm>:
(2,5)="1";
"1";(1,4)*+!U{C}**\dir{-};
"1";(8,4)="2"**\dir{-};
"2";(6,3)="3"**\dir{-};
"3";(4,2)="4"**\dir{-};
"4";(3,1)*+!U{D}**\dir{-};
"4";(5,1)*+!U{E}**\dir{-};
"3";(7,2)*+!U{A}**\dir{-};
"2";(9,3)*+!U{B}**\dir{-};
\end{xy}}
+\vcenter{\begin{xy}<.2cm,0cm>:
(2,5)="1";
"1";(1,4)*+!U{B}**\dir{-};
"1";(4,4)="2"**\dir{-};
"2";(3,3)*+!U{C}**\dir{-};
"2";(8,3)="3"**\dir{-};
"3";(6,2)="4"**\dir{-};
"4";(5,1)*+!U{D}**\dir{-};
"4";(7,1)*+!U{E}**\dir{-};
"3";(9,2)*+!U{A}**\dir{-};
\end{xy}}
+\vcenter{\begin{xy}<.2cm,0cm>:
(8,5)="1";
"1";(6,4)="2"**\dir{-};
"2";(4,3)="3"**\dir{-};
"3";(2,2)="4"**\dir{-};
"4";(1,1)*+!U{D}**\dir{-};
"4";(3,1)*+!U{E}**\dir{-};
"3";(5,2)*+!U{B}**\dir{-};
"2";(7,3)*+!U{C}**\dir{-};
"1";(9,4)*+!U{A}**\dir{-};
\end{xy}}
+\vcenter{\begin{xy}<.2cm,0cm>:
(8,5)="1";
"1";(6,4)="2"**\dir{-};
"2";(2,3)="3"**\dir{-};
"3";(1,2)*+!U{C}**\dir{-};
"3";(4,2)="4"**\dir{-};
"4";(3,1)*+!U{D}**\dir{-};
"4";(5,1)*+!U{E}**\dir{-};
"2";(7,3)*+!U{B}**\dir{-};
"1";(9,4)*+!U{A}**\dir{-};
\end{xy}}\\
&\equiv\vcenter{\begin{xy}<.2cm,0cm>:
(2,5)="1";
"1";(1,4)*+!U{C}**\dir{-};
"1";(4,4)="2"**\dir{-};
"2";(3,3)*+!U{B}**\dir{-};
"2";(6,3)="3"**\dir{-};
"3";(5,2)*+!U{A}**\dir{-};
"3";(8,2)="4"**\dir{-};
"4";(7,1)*+!U{D}**\dir{-};
"4";(9,1)*+!U{E}**\dir{-};
\end{xy}}
-\vcenter{\begin{xy}<.2cm,0cm>:
(2,5)="1";
"1";(1,4)*+!U{B}**\dir{-};
"1";(4,4)="2"**\dir{-};
"2";(3,3)*+!U{C}**\dir{-};
"2";(6,3)="3"**\dir{-};
"3";(5,2)*+!U{A}**\dir{-};
"3";(8,2)="4"**\dir{-};
"4";(7,1)*+!U{D}**\dir{-};
"4";(9,1)*+!U{E}**\dir{-};
\end{xy}}
-\vcenter{\begin{xy}<.2cm,0cm>:
(2,5)="1";
"1";(1,4)*+!U{A}**\dir{-};
"1";(4,4)="2"**\dir{-};
"2";(3,3)*+!U{C}**\dir{-};
"2";(6,3)="3"**\dir{-};
"3";(5,2)*+!U{B}**\dir{-};
"3";(8,2)="4"**\dir{-};
"4";(7,1)*+!U{D}**\dir{-};
"4";(9,1)*+!U{E}**\dir{-};
\end{xy}}
+\vcenter{\begin{xy}<.2cm,0cm>:
(2,5)="1";
"1";(1,4)*+!U{A}**\dir{-};
"1";(4,4)="2"**\dir{-};
"2";(3,3)*+!U{B}**\dir{-};
"2";(6,3)="3"**\dir{-};
"3";(5,2)*+!U{C}**\dir{-};
"3";(8,2)="4"**\dir{-};
"4";(7,1)*+!U{D}**\dir{-};
"4";(9,1)*+!U{E}**\dir{-};
\end{xy}}
\end{align}

expresses $V$ modulo $\ker\pi$
as a linear combination of trees of the form
$\vcenter{\begin{xy}<.2cm,0cm>:
(2,3)="1";
"1";(1,2)*+!U{R}**\dir{-};
"1";(4,2)="2"**\dir{-};
"2";(3,1)*+!U{S}**\dir{-};
"2";(5,1)*+!U{T}**\dir{-};
\end{xy}}$
where $\overline{R},\overline{S},\overline{T}$ are distinct
and $\overline{R},\overline{S}<\overline{T}$.
The claim follows by applying induction to the tree
in position~$22$ of each term
of~\autoref{MainJacobi}. We argue similarly if 
instead $\ell\left(V_{12}\right)>0$.
Finally, we repeat the entire argument in the remaining case that
$\overline{V_1}=\overline{V_{22}}$ but one of $V_1,V_{22}$ has positive length.
\end{proof}

\section{Right aligned forests}\label{RightAlignSection}
Next we define right alignment for forests.
\begin{definition}\label{RightAlignmentUnlabeledForest}
Let $X=UV_1\cdots V_j$ be an unlabeled forest where $U,V_1,\ldots,V_j$
are trees.
\begin{enumerate}
\item The nodes in positions~$1,1^2,1^3,\ldots$ of~$U$
are called the {\em leftmost} nodes of~$X$.
\item We call $X$ {\em even} if each of the trees
$V_1,\ldots,V_j$ has even length
and each of the leftmost nodes of $X$ has even length.
\item $X$ is called {\em admissible} if
$X$ is even and no node of $X$ has parity $\left(1,1\right)$.
\item $X$ is called {\em right aligned} if $X$ is even and
\begin{enumerate}
\item no node of $X$ has parity $\left(1,1\right)$ or $\left(1,0\right)$,
\item each node $Z$ of $X$ of parity $\left(0,0\right)$
satisfies $\overline{Z_1}<\overline{Z_2}$, and
\item each {\em non-leftmost} node $Z$ of $X$ of parity $\left(0,1\right)$ satisfies
$\overline{Z_1}\le\overline{Z_{22}}$.
\end{enumerate}
\item $X$ is called {\em strongly right aligned} if $X$ is right aligned and
its {\em non-leftmost} nodes $Z$ of parity $\left(0,1\right)$ satisfy
\begin{enumerate}
\item
$\overline{Z_1}\ne\overline{Z_{21}}$ unless
$Z_1$ and $Z_{21}$ are both leaves, and
\item
$\overline{Z_{1}}\ne\overline{Z_{22}}$
unless $Z_1$ and $Z_{22}$ are both leaves.
\end{enumerate}
\end{enumerate}
\end{definition}

\begin{definition}\label{RightAlignmentLabeledForest}
A labeled forest $X$ is called {\em admissible, right aligned, or strongly right aligned}
if $\E\left(X\right)$ is admissible, right aligned, or strongly right aligned, and in addition
the node label of each node of parity $\left(0,0\right)$
is one greater than the node label of its parent.
\end{definition}

\begin{proposition}\label{MakeRightAligned} Any labeled forest
is equivalent modulo $\ker\Delta$ to a linear combination of
strongly right aligned forests.
\end{proposition}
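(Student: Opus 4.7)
The plan is to lift the tree-level alignment lemmas of \autoref{AdmissibleSection} to the forest setting by means of \autoref{DeltaBStreet}. Given $X = UV_1\cdots V_j$, I would first reduce to the case that $X$ is even in the sense of \autoref{RightAlignmentUnlabeledForest}, since otherwise the parity hypotheses of \autoref{DeltaBStreet} fail and $[X]_B$ already lies in $\ker\Delta$. In the even case, $\Delta[X]_B$ equals, up to a positive scalar, the image under $\pi$ of the concatenation $U_{1^m}U_{1^{m-1}2}\cdots U_2\,[V_1\cdots V_j]_A$.

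The central observation that makes the lifting work is that $\pi$ is multiplicative over concatenation of forests and recursive under subtree substitution. Consequently, if $T$ is any of the trees $V_i$ or $U_{1^i 2}$ appearing in this concatenation and $T \equiv \sum_k \alpha_k T_k \pmod{\ker\pi}$, then replacing $T$ at its position (and reassembling the ambient tree $U$ in the spine case) produces a linear combination whose B-orbit agrees with $[X]_B$ modulo $\ker\Delta$. Applying \autoref{StronglyAligned} to each even-length $V_i$, and \autoref{OddLemma} followed by \autoref{StronglyAligned} to each odd-length spine branch $U_{1^i 2}$, then yields a linear combination of forests whose underlying unlabeled shape is strongly right aligned.

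Finally I would address the labeling constraint of \autoref{RightAlignmentLabeledForest}, which requires each node of parity $(0,0)$ to carry a label one greater than its parent's. For any strongly right aligned unlabeled shape, a labeling meeting this constraint can be exhibited using the standard increasing-label convention on the $(0,0)$-chains. The main technical obstacle I anticipate is verifying that the Jacobi rewrites of \autoref{AdmissibleSection} lift in such a way that each resulting term admits a compatible labeling within the same B-orbit; since those rewrites are shape-preserving up to reversals and permutations of subtrees, I expect a careful bookkeeping argument to suffice.
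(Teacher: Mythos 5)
Your proposal is correct and follows essentially the same route as the paper: reduce to the even case via \autoref{DeltaBStreet}, apply \autoref{OddLemma} to the spine branches $U_{1^i2}$ and \autoref{StronglyAligned} to the $V_i$ and the resulting even-length children, and then choose a preimage under $\E$ with the labeling of \autoref{RightAlignmentLabeledForest} (e.g.\ via $\F$). Your explicit remark that $\pi$ is multilinear under subtree substitution, so that $\ker\pi$-congruences of the substituted subtrees lift to $\ker\Delta$-congruences of the ambient forests, is exactly the (implicit) justification in the paper's proof, and the labeling issue you flag is resolved just as you suggest, since $\Delta$ depends on a labeled forest only through its unlabeled shape.
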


\begin{proof} Let $X\in L_{n+1}$.
We can assume that $X$ is even, since otherwise $X\in\ker\Delta$
by \autoref{DeltaBStreet}.
Suppose that $\E\left(X\right)
=UV_1\cdots V_j$ where $U,V_1,\ldots,V_j$
are unlabeled trees. Applying
\autoref{StronglyAligned} to the nodes
$V_1,V_2,\ldots,V_j$ and
$U_{1^i21},U_{1^i22}$ for all $i\ge 0$
after applying \autoref{OddLemma} to the nodes
$U_{1^i2}$ for all $i\ge 0$
expresses $\E\left(X\right)$ modulo $\ker\pi$
as a linear combination of strongly right aligned forests.
Then any preimage under $\E$ of this linear combination
satisfying the labeling condition in \autoref{RightAlignmentLabeledForest}
is congruent to $X$ modulo $\ker\Delta$ and strongly right aligned.
For example, the map $\F:M_{n+1}\to L_{n+1}$ defined in
\autoref{TotalSection} produces such a preimage.
\end{proof}

\section{The primary factorization}
The purpose of this section is to introduce
a mechanism for factorizing right aligned forests.
We use it in inductive arguments in the following sections.
Suppose that $X=X_0X_1\cdots X_j$ is a right aligned labeled forest
where $X_0,X_1,\ldots,X_j$ are trees and let $0\le i\le j$ be such that
$X_i$ is the tree in $X$ with node label $1$. We put
\[X'=\vcenter{\begin{xy}<.4cm,0cm>:
(0,3)*+!UR{{\color{red}x_0x_1}\cdots{\color{red}x_{i-1}}};
(2,3)="1"*+!U{_1};
"1";(1,2)*+!U{\color{red}x_{i1}}**\dir{-};
"1";(4,2)="2"*+!U{_2}**\dir{-};
"2";(3,1)*+!U{\color{red}x_{i21}}**\dir{-};
"2";(5,1)*+!U{\color{red}x_{i22}}**\dir{-};
(6,3)*+!UL{{\color{red}x_{i+1}}\cdots{\color{red}x_j}};
\end{xy}}\]
where $x_0x_1\cdots x_{i-1}
x_{i1}x_{i21}x_{i22}x_{i+1}\cdots x_j=\overline{X''}$
where $X''$ is obtained from \[X_0X_1\cdots X_{i-1}
X_{i1}X_{i21}X_{i22}X_{i+1}\cdots X_j\]
by reducing all the node labels by two.
Then the factorization $X=X'\bullet X''$ is called the
{\em primary factorization} of $X$.

Now if $X=X_0X_1\cdots X_j$ is a right aligned {\em unlabeled} forest, 
then the construction above can be carried out
for every $0\le i\le j$ for which $\ell\left(X_i\right)>0$.
We call the resulting factorization the {\em primary factorization
of $X$ with respect to $i$}.

Iterating the primary factorization of a right aligned forest yields a
factorization of the forest into a product of
right aligned forests of length two.
If the forest is unlabeled, then the factorization
is not unique in general, since it depends on the choice of
$i$ at every stage.

\section{The product of edges of $Q_n$}
Our aim in this section is to study the subalgebra of
$k\mathcal{L}_{n+1}$
generated by the edges of $Q_n$. This results in the formula
in \autoref{IotaProduct} for the product of various edges of $Q_n$.
For an admissible labeled forest $X$
let $\mathcal{T}_X$ be the set of non-leftmost nodes of $X$ of even length.
Consider the following transformations of $X$.
\begin{enumerate}
\item[(P1)]\label{MoveOne}
replacing $U\in\mathcal{T}_X$ with its mirror image $\overleftarrow{U}$ 
\item[(P2)]\label{MoveTwo} exchanging $U$ and $V$, where $U,V\in\mathcal{T}_X$
satisfy $\overline{U}=\overline{V}$ and the node labels
of the parents of $U$ and $V$, if they exist, are smaller
than the node labels of $U$ and $V$, if they exist
\item[(P3)]\label{MoveThree} exchanging
two of the trees $X_1,X_2,\ldots,X_j$ where $X=X_0X_1\cdots X_j$
\end{enumerate}
We define an equivalence relation $\sim$ on admissible
labeled forests by $X\sim Y$
if $Y$ can be obtained from $X$
by applying a sequence of moves
\hyperref[MoveOne]{(P1)}--\hyperref[MoveThree]{(P3)}.
The condition on the node labels in move \hyperref[MoveTwo]{(P2)} is
meant to ensure that the resulting forest will also be a labeled forest.
Moves \hyperref[MoveOne]{(P1)} and \hyperref[MoveThree]{(P3)} ensure that
$\sim$ induces an equivalence relation on the
B-orbits of admissible labeled forests, which we also denote by $\sim$.
For example, the forests
\begin{equation}
\label{WiggleExample}
\begin{array}{ccc}
\vcenter{\begin{xy}<.2cm,0cm>:
(2,5)="1"*+!U{_1};
"1";(1,4)*+!U{\color{red}1}**\dir{-};
"1";(4,4)="2"*+!U{_2}**\dir{-};
"2";(3,3)*+!U{\color{red}1}**\dir{-};
"2";(6,3)="5"*+!U{_5}**\dir{-};
"5";(5,2)*+!U{\color{red}1}**\dir{-};
"5";(8,2)="6"*+!U{_6}**\dir{-};
"6";(7,1)*+!U{\color{red}1}**\dir{-};
"6";(9,1)*+!U{\color{red}2}**\dir{-};
(11,5)="3"*+!U{_3};
"3";(10,4)*+!U{\color{red}1}**\dir{-};
"3";(13,4)="4"*+!U{_4}**\dir{-};
"4";(12,3)*+!U{\color{red}1}**\dir{-};
"4";(14,3)*+!U{\color{red}4}**\dir{-};
(15,5)*+!U{\color{red}4};
\end{xy}}
\qquad &\vcenter{\begin{xy}<.2cm,0cm>:
(2,5)="1"*+!U{_1};
"1";(1,4)*+!U{\color{red}1}**\dir{-};
"1";(4,4)="2"*+!U{_2}**\dir{-};
"2";(3,3)*+!U{\color{red}1}**\dir{-};
"2";(8,3)="5"*+!U{_5}**\dir{-};
"5";(6,2)="6"*+!U{_6}**\dir{-};
"6";(5,1)*+!U{\color{red}2}**\dir{-};
"6";(7,1)*+!U{\color{red}1}**\dir{-};
"5";(9,2)*+!U{\color{red}1}**\dir{-};
(11,5)="3"*+!U{_3};
"3";(10,4)*+!U{\color{red}1}**\dir{-};
"3";(13,4)="4"*+!U{_4}**\dir{-};
"4";(12,3)*+!U{\color{red}1}**\dir{-};
"4";(14,3)*+!U{\color{red}4}**\dir{-};
(15,5)*+!U{\color{red}4};
\end{xy}}
\qquad &\vcenter{\begin{xy}<.2cm,0cm>:
(2,5)="1"*+!U{_1};
"1";(1,4)*+!U{\color{red}1}**\dir{-};
"1";(4,4)="2"*+!U{_2}**\dir{-};
"2";(3,3)*+!U{\color{red}1}**\dir{-};
"2";(5,3)*+!U{\color{red}4}**\dir{-};
(7,5)="3"*+!U{_3};
"3";(6,4)*+!U{\color{red}1}**\dir{-};
"3";(9,4)="4"*+!U{_4}**\dir{-};
"4";(8,3)*+!U{\color{red}1}**\dir{-};
"4";(11,3)="5"*+!U{_5}**\dir{-};
"5";(10,2)*+!U{\color{red}1}**\dir{-};
"5";(13,2)="6"*+!U{_6}**\dir{-};
"6";(12,1)*+!U{\color{red}1}**\dir{-};
"6";(14,1)*+!U{\color{red}2}**\dir{-};
(15,5)*+!U{\color{red}4};
\end{xy}}\\
\vcenter{\begin{xy}<.2cm,0cm>:
(2,5)="1"*+!U{_1};
"1";(1,4)*+!U{\color{red}1}**\dir{-};
"1";(4,4)="2"*+!U{_2}**\dir{-};
"2";(3,3)*+!U{\color{red}1}**\dir{-};
"2";(5,3)*+!U{\color{red}4}**\dir{-};
(7,5)="3"*+!U{_3};
"3";(6,4)*+!U{\color{red}1}**\dir{-};
"3";(9,4)="4"*+!U{_4}**\dir{-};
"4";(8,3)*+!U{\color{red}1}**\dir{-};
"4";(13,3)="5"*+!U{_5}**\dir{-};
"5";(11,2)="6"*+!U{_6}**\dir{-};
"6";(10,1)*+!U{\color{red}2}**\dir{-};
"6";(12,1)*+!U{\color{red}1}**\dir{-};
"5";(14,2)*+!U{\color{red}1}**\dir{-};
(15,5)*+!U{\color{red}4};
\end{xy}}
&\vcenter{\begin{xy}<.2cm,0cm>:
(2,3)="1"*+!U{_1};
"1";(1,2)*+!U{\color{red}1}**\dir{-};
"1";(4,2)="2"*+!U{_2}**\dir{-};
"2";(3,1)*+!U{\color{red}1}**\dir{-};
"2";(5,1)*+!U{\color{red}4}**\dir{-};
(7,3)="3"*+!U{_3};
"3";(6,2)*+!U{\color{red}1}**\dir{-};
"3";(9,2)="4"*+!U{_4}**\dir{-};
"4";(8,1)*+!U{\color{red}1}**\dir{-};
"4";(10,1)*+!U{\color{red}4}**\dir{-};
(12,3)="5"*+!U{_5};
"5";(11,2)*+!U{\color{red}1}**\dir{-};
"5";(14,2)="6"*+!U{_6}**\dir{-};
"6";(13,1)*+!U{\color{red}1}**\dir{-};
"6";(15,1)*+!U{\color{red}2}**\dir{-};
\end{xy}}
\end{array}
\end{equation}

are related by $\sim$.
In fact, the forests shown in \autoref{WiggleExample}
represent all the distinct B-orbits of forests related by $\sim$
to any of the forests shown in \autoref{WiggleExample}.

\begin{lemma}\label{IotaProduct}
$e_1\bullet e_2\bullet\cdots\bullet e_l
=\sum_{\left[Y\right]_B\sim\left[X\right]_B}\left[Y\right]_B$
for any edges $e_1,e_2,\ldots,e_l$ of $Q_n$ where $X$
is any term of $e_1\bullet e_2\bullet\cdots\bullet e_l$.
\end{lemma}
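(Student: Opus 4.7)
I will prove the lemma by induction on $l$. The base case $l = 1$ holds because the moves P1, P2, and P3 applied to a length-2 labeled forest $X$ representing an edge produce only forests already in $[X]_B$: the non-leftmost even-length nodes in $\mathcal{T}_X$ correspond either to the $\mathfrak{S}_2$-mirror action on a length-two $V_i$ (for edges of type (Q2) or (Q3)) or to permutations of $V$-trees, both of which are already captured by the $\mathfrak{S}_2\wr\mathfrak{S}_j$-action defining $[X]_B$. In particular, the length-two tree coming from a type (Q1) edge sits in the leftmost position $U$ and is therefore excluded from $\mathcal{T}_X$, which matches the fact that the B-orbit of a (Q1) edge does not mirror its length-two tree.

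For the inductive step, expand $e_1\bullet e_2\bullet\cdots\bullet e_l$ as $\sum Y_1\bullet Y_2\bullet\cdots\bullet Y_l$, summed over tuples of representatives $Y_i\in e_i$ with $\underline{Y_i}=\overline{Y_{i+1}}$ for all $i$. Each nonzero term is a labeled forest of length $2l$ in which the $i$-th edge contributes the two node labels $\{2i-1,2i\}$ under the label-shift convention of $\bullet$. Reading off the pair of labels $\{2i-1,2i\}$ carves out the $i$-th layer of the iterated primary factorization, from which the entire tuple $(Y_1,\ldots,Y_l)$ is reconstructed; hence the map from tuples to terms is injective, and each term appears with multiplicity exactly one in the expansion. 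This matches the coefficient one with which each forest occurs on the right-hand side, so it suffices to identify the set of terms, as B-orbits, with a single $\sim$-equivalence class.

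The heart of the argument is matching the generators of $\sim$ with changes of representative in the tuple: a move P3 permuting the top-level trees of a term corresponds to permuting the $V$-trees of $Y_1$ together with a corresponding re-routing of the insertion positions of $Y_2$; a move P1 mirroring a non-leftmost even-length node $N$ corresponds to the $\mathfrak{S}_2$-mirror factor in the B-orbit of whichever $Y_i$ introduced the length-two subtree rooted at $N$; and a move P2 swapping two equal-valued non-leftmost even-length subtrees corresponds, in the simplest case, to permuting two $V$-trees of some $Y_i$ whose foliage sums agree. I expect the principal technical obstacle to be the remaining case of P2 in which the two exchanged subtrees were introduced by different $Y_i$'s at different compositional depths; here the hypothesis in P2 that both parents have smaller node labels than the swapped nodes is precisely what guarantees that after the swap, the iterated primary factorization still factors through $[X^{(1)}]_B,\ldots,[X^{(l)}]_B$ in the same order, so that the resulting forest remains a term of the product. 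Combined with a routine verification that every $\sim$-neighbor of a term is realizable in this way and that closure under $\sim$ exhausts the equivalence class, the lemma follows.
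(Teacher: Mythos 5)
Your overall strategy is recognizably close to the paper's: the layer-by-layer reconstruction of the tuple $(Y_1,\ldots,Y_l)$ from the node labels $\{2i-1,2i\}$ is exactly the iterated primary factorization, and it does give multiplicity one. But the heart of the lemma --- that the set of terms, as B-orbits, is precisely one $\sim$-equivalence class --- is asserted rather than proved in your proposal. The cross-layer case of (P2) is flagged as the ``principal technical obstacle'' and then dispatched by the unproven claim that the label condition keeps the iterated factorization intact, and the closing ``routine verification'' is in fact the entire content of the lemma. Moreover your matching for (P1) is inaccurate as stated: mirroring a non-leftmost even-length subtree $N$ mirrors \emph{all} trees of later layers sitting inside $N$, not just the cherry of the single $Y_i$ that created $N$, so you must simultaneously re-choose several representatives and you need to know that no layer's \emph{first} tree (which its B-orbit does not allow you to mirror or permute) can lie inside $N$; this follows from the fact that each layer's first tree is grafted along the leftmost spine, a fact you never establish. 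You are also missing the parity observation that in a term of a product of edges the subtree rooted at the node labeled $2i$ has odd length ($1$ plus twice the number of later cherries below it) while the one rooted at $2i-1$ has even length; this is what confines (P1)/(P2) to whole layer-blocks and rules out moves that would detach the node $2i$ from $2i-1$ and so produce a forest that is not a term at all. Without these structural facts your ``remaining forest is still a term'' claim has no support.

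The paper sidesteps all of this cross-layer bookkeeping by a cleaner induction: one may assume $X$ is right aligned, write the primary factorization $X=X'\bullet X''$ with $X'$ a term of $e_1$ and $X''$ a term of $e_2\bullet\cdots\bullet e_l$, obtain the forward inclusion from the induction hypothesis, and for the converse reduce to a \emph{single} move applied to $X$: if the move touches the node labeled $1$, then (since no parent can carry a label smaller than $1$) both subtrees involved are whole top-level trees and $\left[Y\right]_B=\left[X\right]_B$; otherwise the move involves only labels greater than $2$, hence descends to a move on $X''$, giving $Y=X'\bullet Z$ with $Z\sim X''$, which is a term by induction. To complete your argument you would essentially have to reproduce this reduction, or else prove directly the parity and leftmost-spine facts above together with a careful verification, in both directions, that single moves correspond to re-choices of representatives.
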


\begin{proof} 
Observe that if $e_1\bullet e_2\bullet\cdots\bullet e_l$
is nonzero, then it has a
right aligned term, since $e_1,e_2,\ldots,e_l$ are represented
by right aligned forests. We can therefore assume that $X$ is right aligned.
The formula holds when $l=1$ since
$e_1=\left[X\right]_B$ in this case.
Otherwise suppose that $X'\bullet X''$
is the primary factorization of $X$.
Note that $X'$ is a term of $e_1$ and $X''$
is a term of $e_2\bullet\cdots\bullet e_l$.
Assuming by induction that
$e_2\bullet\cdots\bullet e_l
=\sum_{\left[Z\right]_B\sim\left[X''\right]_B}\left[Z\right]_B$ we have
\begin{equation}\label{IotaFactorization}
e_1\bullet e_2\bullet\cdots\bullet e_l
=\left[X'\right]_B
\bullet\sum_{\left[Z\right]_B\sim\left[X''\right]_B}\left[Z\right]_B.
\end{equation}
Note that all the terms $\left[Y\right]_B$
of the right hand side of \autoref{IotaFactorization}
satisfy $\left[Y\right]_B\sim\left[X\right]_B$.
Conversely, suppose that $\left[Y\right]_B$ is such that
$\left[Y\right]_B\sim\left[X\right]_B$.
We can assume that $Y$ can be obtained from $X$ by applying
a single move \hyperref[MoveOne]{(P1)}--\hyperref[MoveThree]{(P3)}
since $\sim$ is the reflexive and transitive closure
of the set of all such pairs of forests.
If the move exchanges the node labeled $1$ with some other
node, then both nodes must be in position $\emptyset$ of their trees,
since the node labeled $1$ can never be a proper subtree.
It follows that $\left[X\right]_B=\left[Y\right]_B$.
Similarly, if the move applies $\overleftarrow{\cdot}$
to the node labeled $1$ then again
$\left[X\right]_B=\left[Y\right]_B$.
Otherwise the move involves nodes with labels greater than $2$.
It follows that $Y=X'\bullet Z$
for some forest $Z$ satisfying $Z\sim X''$.
This shows that $\left[Y\right]_B$
is a term of $e_1\bullet e_2\bullet\cdots\bullet e_l$.
\end{proof}

We can associate a path $\p\left(X\right)$
to a right aligned labeled forest $X\in L_{n+1}$ as follows.
If $\ell\left(X\right)=0$ then we define
$\p\left(X\right)$ to be the vertex $\left[X\right]_B$ of $Q_n$.
If $\ell\left(X\right)>0$ then we put
\[\p\left(X\right)=\left[X'\right]_B\circ\p\left(X''\right)\]
where $X'\bullet X''$ is the primary factorization of $X$.
Note that $\left[X'\right]_B$ is an edge of $Q_n$
by \autoref{RightAlignmentUnlabeledForest}.
Here we denote the product in $kQ_n$ by $\circ$ in order
to distinguish it from the product $\bullet$ in $k\mathcal{L}_{n+1}$.
There is a natural anti-homomorphism $\iota:kQ_n\to k\mathcal{L}_{n+1}$ given
by replacing $\circ$ with $\bullet$.
Following is a reformulation
of \autoref{IotaProduct}
voiced in terms of $\iota$ and $\p$.
It follows with the observation that
$X$ is a term of $\iota\left(\p\left(X\right)\right)$.

\begin{corollary}\label{IotaWiggle}
If $X$ is a right aligned labeled forest, then
$\displaystyle\iota\left(\p\left(X\right)\right)
=\sum_{\left[Y\right]_B\sim\left[X\right]_B}\left[Y\right]_B$.\end{corollary}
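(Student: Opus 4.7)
The plan is to derive the corollary directly from \autoref{IotaProduct}. By unfolding the recursive definition $\p(X)=[X']_B\circ\p(X'')$, where $X'\bullet X''$ is the primary factorization of $X$, and iterating, we obtain an expression $\p(X)=e_1\circ e_2\circ\cdots\circ e_l$ for certain edges $e_1,\ldots,e_l$ of $Q_n$, so that $\iota(\p(X))$ can be written as a $\bullet$-product of these edges. \autoref{IotaProduct} then identifies this product with $\sum_{[Y]_B\sim[Z]_B}[Y]_B$ for any term $Z$ appearing in its expansion.

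It thus remains to verify the observation that $X$ itself is a term of $\iota(\p(X))$. This proceeds by induction on $\ell(X)$. In the base case $\ell(X)=0$, we have $\iota(\p(X))=[X]_B$, which plainly has $X$ as a term. For $\ell(X)>0$, the primary factorization $X=X'\bullet X''$ exhibits $X'$ as a term of the edge $[X']_B$ and, by the inductive hypothesis, $X''$ as a term of $\iota(\p(X''))$. Thus $X=X'\bullet X''$ appears among the terms produced when $[X']_B$ is combined via $\bullet$ with $\iota(\p(X''))$, which is exactly the structure of $\iota(\p(X))$.

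The main subtlety is bookkeeping: one must be attentive to the order in which the anti-homomorphism $\iota$ assembles the $\bullet$-product from the path $\p(X)$, so that $X$ indeed arises as a term by pairing the primary-factorization pieces in the correct order. In particular, the convention that the primary factorization of $X$ produces its ``outermost'' edge $[X']_B$ as the first factor $e_1$ must line up with the same convention used in \autoref{IotaProduct}, where the proof picks out $X'$ as a term of $e_1$ and $X''$ as a term of $e_2\bullet\cdots\bullet e_l$. Once this matching is made explicit, the corollary follows immediately from \autoref{IotaProduct}.
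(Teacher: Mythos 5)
Your proposal is correct and follows essentially the same route as the paper: the paper likewise derives the corollary by applying \autoref{IotaProduct} to the edges appearing in $\p\left(X\right)$ together with the observation that $X$ is a term of $\iota\left(\p\left(X\right)\right)$, an observation you merely make explicit by the evident induction on the primary factorization.
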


\section{A total order on unlabeled forests}\label{TotalSection}
The purpose of this section is to develop an important component
of the proof of the quiver in \autoref{ProofQuiverSection}.
This consists of defining a preferred preimage
$\F\left(X\right)\in L_{n+1}$
under $\E$ of a right aligned
unlabeled forest $X\in M_{n+1}$ and a total order
on unlabeled forests with respect to which the images under
of $\E$ of forests in relation $\sim$ with $\F\left(X\right)$
are smaller than $X$.

The first step is to define a total order $<$ on the set of admissible
unlabeled trees of even length.
We will denote the parity
\[\left(\rule{0pt}{11pt}\ell\left(V_1\right)\pmod{2},
\ell\left(V_2\right)\pmod{2}\right)\]
of a tree $V$ of positive length by $\pr\left(V\right)$.
Observe that if $U$ is tree of positive, even length,
then one of its children $U_1$ or $U_2$ has even length
and the other has odd length.
We denote these trees by $U_E$ and $U_O$ respectively.
Let $U$ and $V$ be admissible unlabeled trees of even length.
We write $U<V$ if one
of the following conditions holds.
\begin{enumerate}
\item $\overline{U}<\overline{V}$
\item $\overline{U}=\overline{V}$ and $\ell\left(U\right)>\ell\left(V\right)$
\item\label{Condition3}
$\overline{U}=\overline{V}$ and $\ell\left(U\right)=\ell\left(V\right)$
and $\pr\left(U\right)=\left(0,1\right)$ and $\pr\left(V\right)
=\left(1,0\right)$
\item\label{Condition4}
$\overline{U}=\overline{V}$ and $\ell\left(U\right)=\ell\left(V\right)$ and
$\pr\left(U\right)=\pr\left(V\right)$ and
$U_{E}<V_{E}$
\item\label{Condition5}
$\overline{U}=\overline{V}$ and $\ell\left(U\right)=\ell\left(V\right)$
and $\pr\left(U\right)=\pr\left(V\right)$ and
$U_{E}=V_{E}$ and $U_{O1}<V_{O1}$
\item\label{Condition6}
$\overline{U}=\overline{V}$ and $\ell\left(U\right)=\ell\left(V\right)$
and $\pr\left(U\right)=\pr\left(V\right)$ and
$U_{E}=V_{E}$ and $U_{O1}=V_{O1}$ and $U_{O2}<V_{O2}$
\end{enumerate}
Note that in situations \autoref{Condition4}--\autoref{Condition6}
the trees $U_E,U_{O1},U_{O2},V_E,V_{O1},V_{O2}$ have even length less than
$\ell\left(U\right)=\ell\left(V\right)$ and can therefore be compared
by induction. The relation $<$ is a total order on admissible
unlabeled trees of even length.

The next step is to define the map $\F$.
Let $X\in M_{n+1}$ be a right aligned unlabeled forest.
If $X$ has length zero, then $X$ is also a labeled forest and we define 
$\F\left(X\right)=X$.
If $X$ has positive length, then suppose that $X=X_0X_1\cdots X_j$ 
where $X_0,X_1,\ldots,X_j$ are trees and let
$0\le i\le j$ be such that $X_i$ is minimal with respect to $<$ 
among the trees $X_0,X_1,\ldots,X_j$ of positive length. We define
\[\F\left(X\right)=\left(\vcenter{\begin{xy}<.4cm,0cm>:
(0,3)*+!UR{{\color{red}x_0x_1}\cdots{\color{red}x_{i-1}}};
(2,3)="1"*+!U{_1};
"1";(1,2)*+!U{\color{red}x_{i1}}**\dir{-};
"1";(4,2)="2"*+!U{_2}**\dir{-};
"2";(3,1)*+!U{\color{red}x_{i21}}**\dir{-};
"2";(5,1)*+!U{\color{red}x_{i22}}**\dir{-};
(6,3)*+!UL{{\color{red}x_{i+1}}\cdots{\color{red}x_j}};
\end{xy}}\right)\bullet\F\left(X''\right)\]
where
\[X'=\vcenter{\begin{xy}<.4cm,0cm>:
(0,3)*+!UR{{\color{red}x_0x_1}\cdots{\color{red}x_{i-1}}};
(2,3)="1";
"1";(1,2)*+!U{\color{red}x_{i1}}**\dir{-};
"1";(4,2)="2"**\dir{-};
"2";(3,1)*+!U{\color{red}x_{i21}}**\dir{-};
"2";(5,1)*+!U{\color{red}x_{i22}}**\dir{-};
(6,3)*+!UL{{\color{red}x_{i+1}}\cdots{\color{red}x_j}};
\end{xy}}\]
where $X'\bullet X''$ is the primary factorization of $X$
with respect to $i$.
Note that $\E\left(\F\left(X\right)\right)=X$ by induction.

Now let $X=X_0X_1\cdots X_j\in M_{n+1}$ be an admissible unlabeled forest
of depth~$m$ where $X_0,X_1,\ldots,X_j$ are trees.
Let $\mathcal{T}_{jm}$ be the set containing the positions
of the trees $X_1,X_2,\ldots,X_j$ in $X$
and also containing the positions
$\left\{1^i21,1^i22 \mid 0\le i\le m-1\right\}$ of the tree $X_0$.
Then any admissible forest with $j+1$ trees and depth~$m$
has a subtree in each of the positions in $\mathcal{T}_{jm}$.
Suppose that $\prec_X$ is a total order on $\mathcal{T}_{jm}$
compatible with the order $<$ defined above in the sense that if
$U$ and $V$ are subtrees of $X$ for which
the position of $U$ is smaller with respect to $\prec_X$
than the position of $V$ then $U<V$.
Then $\prec_X$ induces a lexicographic order on the set of admissible
unlabeled forests with $j+1$ trees and depth~$m$.
Namely, one compares two such forests
by comparing the subtrees in the positions in $\mathcal{T}_{jm}$
with respect to $<$ in the order specified by $\prec_X$.
This order is also denoted by $\prec_X$.

\begin{figure}
\caption{Example of the total order $\prec_X$}\label{OrderExample}
\input{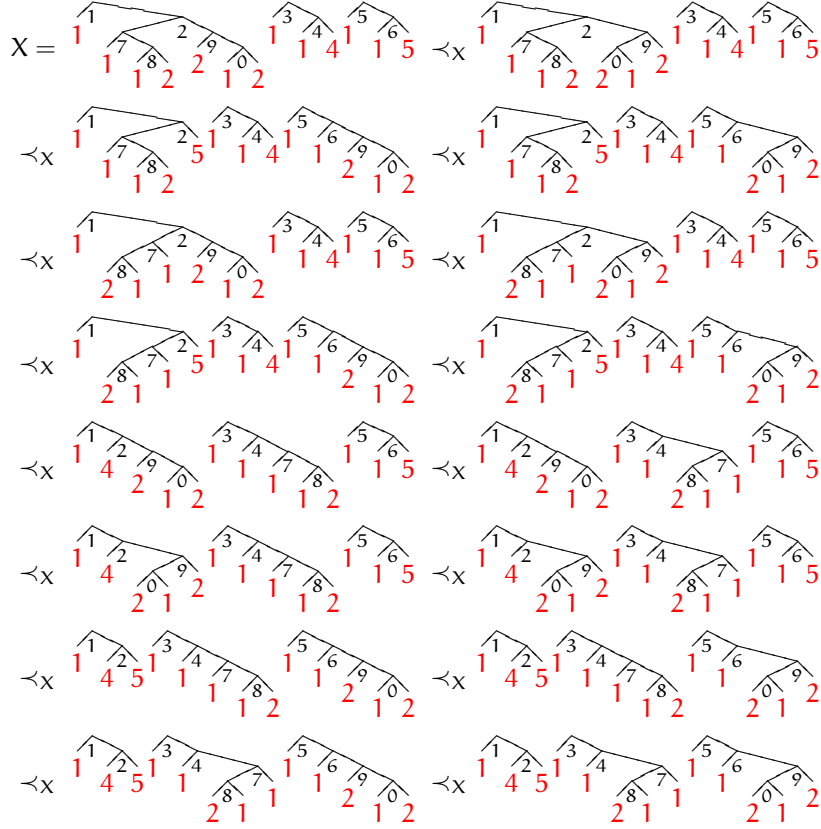}
\end{figure}
\begin{example}
After applying the map $\E$ the forests shown
in \autoref{OrderExample} appear in increasing order according to
$\prec_X$ where $X$ is the first forest shown. Note
that there is only one possible lexicographic order
$\prec_X$ for the forest $\E\left(X\right)$ in this case.
Here the node label $0$ denotes ten.
The sixteen forests shown 
in \autoref{OrderExample} represent all the distinct
B-orbits of forests related to $X$ by~$\sim$.
However, note that if we replace $X$ with 
\[\F\left(\E\left(X\right)\right)
=\vcenter{\begin{xy}<.2cm,0cm>:
(2,5)="1"*+!U{_5};
"1";(1,4)*+!U{\color{red}1}**\dir{-};
"1";(8,4)="2"*+!U{_6}**\dir{-};
"2";(4,3)="7"*+!U{_7}**\dir{-};
"7";(3,2)*+!U{\color{red}1}**\dir{-};
"7";(6,2)="8"*+!U{_8}**\dir{-};
"8";(5,1)*+!U{\color{red}1}**\dir{-};
"8";(7,1)*+!U{\color{red}2}**\dir{-};
"2";(10,3)="9"*+!U{_9}**\dir{-};
"9";(9,2)*+!U{\color{red}2}**\dir{-};
"9";(12,2)="10"*+!U{_0}**\dir{-};
"10";(11,1)*+!U{\color{red}1}**\dir{-};
"10";(13,1)*+!U{\color{red}2}**\dir{-};
(15,5)="3"*+!U{_1};
"3";(14,4)*+!U{\color{red}1}**\dir{-};
"3";(17,4)="4"*+!U{_2}**\dir{-};
"4";(16,3)*+!U{\color{red}1}**\dir{-};
"4";(18,3)*+!U{\color{red}4}**\dir{-};
(20,5)="5"*+!U{_3};
"5";(19,4)*+!U{\color{red}1}**\dir{-};
"5";(22,4)="6"*+!U{_4}**\dir{-};
"6";(21,3)*+!U{\color{red}1}**\dir{-};
"6";(23,3)*+!U{\color{red}5}**\dir{-};
\end{xy}}\]
then the only B-orbits related to $\left[X\right]_B$
are obtained by applying
\hyperref[MoveOne]{(P1)} to the nodes labeled $7$ or $9$.
Indeed, the purpose of the map $\F$ is to reduce the number
of forests related to a given forest by $\sim$.
\end{example}

\begin{lemma}\label{FMinimum}
If $X\in M_{n+1}$ is strongly right aligned 
and $Z\in L_{n+1}$ is such that $Z\sim\F\left(X\right)$
but $\left[Z\right]_B\ne\left[\F\left(X\right)\right]_B$
then $\E\left(Z\right)\prec_XX$.
\end{lemma}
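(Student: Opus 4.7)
The plan is by induction on $\ell(X)$. The base case $\ell(X)=0$ is vacuous: $\F(X)=X$ is a sequence of leaves with singleton $\sim$-class, so no such $Z$ exists. For the inductive step, use the primary factorization $\F(X) = X' \bullet \F(X'')$ corresponding to the $<$-minimum tree $X_i$ of positive length chosen by $\F$, where $X'$ has node $_1$ at the root of $X_i$ and node $_2$ at $X_{i2}$.

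The key structural observation is that the node labeled $_1$ in $\F(X)$ is essentially pinned: any \hyperref[MoveTwo]{(P2)} move involving it would require the partner's parent label to be less than $1$, which is impossible, so \hyperref[MoveTwo]{(P2)} on node $_1$ is limited to swaps with other non-leftmost top tree roots, which already lie in the $\mathfrak{S}_2\wr\mathfrak{S}_j$-action defining the B-orbit. Meanwhile, since $X_i$ has parity $(0,1)$ (from right alignment and $\ell(X_i)$ being even and positive), the right child $X_{i2}$ has odd length, so node $_2$ is not in $\mathcal{T}_{\F(X)}$ and \hyperref[MoveOne]{(P1)} on $_2$ is unavailable. \hyperref[MoveOne]{(P1)} on node $_1$ is either unavailable (if $i=0$, since the root of $X_0$ is leftmost) or is the mirror of a non-leftmost top tree, which lies in the B-orbit. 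Consequently, every B-orbit-changing move in any $\sim$-chain from $\F(X)$ to $Z$ must involve only nodes labeled $\ge 3$, i.e., nodes inside the $\F(X'')$ factor.

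Writing $Z = Y' \bullet Y''$ with $Y'\sim_B X'$ and $Y''\sim\F(X'')$, the hypothesis $[Z]_B\ne[\F(X)]_B$ then forces $[Y'']_B\ne[\F(X'')]_B$. The inductive hypothesis applied to the unlabeled forest $X'' = \E(\F(X''))$, which inherits strong right alignment from $X$ through the primary factorization, yields $\E(Y'') \prec_{X''} X''$. Translating through the $\bullet$-product: $\E(Z) = \E(X')\bullet\E(Y'')$ and $X = \E(X')\bullet X''$ agree at the positions in $\mathcal{T}_{jm}(X)$ coming from the head $\E(X')$ (in particular at position $i$, since $Y'\sim_B X'$ preserves the subtree structure there up to B-orbit), and the $\prec_{X''}$-decrease in the tail translates to a $\prec_X$-decrease for the full forest.

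The main obstacle will be verifying the position compatibility between $\mathcal{T}_{jm}(X)$ and $\mathcal{T}_{j''m''}(X'')$ so that the inductive $\prec_{X''}$-comparison on the tail propagates correctly to $\prec_X$. This requires using that $X_i$ is the $<$-minimum subtree at positions of $\mathcal{T}_{jm}(X)$, so the positions corresponding to $X_i$ come first in $\prec_X$ and the equivalence $Y'\sim_B X'$ guarantees agreement there, leaving the tail positions to determine the comparison. The strong right alignment hypothesis on $X$ is used both to ensure uniqueness of the $<$-minimum tree $X_i$ (via the strict inequalities in \autoref{RA3} strengthened by the ``unless both leaves'' clauses) and to guarantee that the inductive hypothesis applies to $X''$, the most delicate step being the careful bookkeeping of how positions and depths change when $X_i$ is peeled off in the primary factorization.
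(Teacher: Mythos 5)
Your strategy---induction through the primary factorization---is genuinely different from the paper's proof, but as it stands it has two real gaps. First, the locality claim, namely that every B-orbit-changing move in a chain from $\F\left(X\right)$ to $Z$ involves only nodes labelled at least $3$, so that $Z=Y'\bullet Y''$ with $\left[Y'\right]_B=\left[X'\right]_B$ and $Y''\sim\F\left(X''\right)$, is not proved, and the justification you give is incorrect on the paper's definitions. By \autoref{RightAlignmentUnlabeledForest} the leftmost nodes are those in positions $1,1^2,\ldots$ of the first tree, so the root of $X_0$ is \emph{not} leftmost; hence when $i=0$ the subtree rooted at the node labelled $1$ lies in $\mathcal{T}_{\F\left(X\right)}$ whenever it has even length, and \hyperref[MoveOne]{(P1)} is not automatically unavailable there. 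More importantly, \hyperref[MoveTwo]{(P2)} permits exchanging the subtree rooted at the node labelled $1$ with \emph{any} parentless subtree of equal squash (the parent-label condition is vacuous for parentless subtrees), in particular with the first tree of the forest; since the wreath-product action defining B-orbits never touches the first tree, such a move can change the B-orbit while involving the node labelled $1$. Note also that, as \autoref{WiggleExample} and the paper's own argument make clear, equal-squash \emph{leaves} participate in these exchanges, and exchanges of the leaves sitting directly under the nodes labelled $1$ and $2$ with positive-length subtrees elsewhere are precisely the moves that matter; your pinning argument does not confine any of this to the tail factor.

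Second, even granting locality, the step you defer---transferring $\E\left(Y''\right)\prec_{X''}X''$ to $\E\left(Z\right)\prec_X X$---is the substance of the lemma, not bookkeeping. The positions used for $X''$ (where $X_{i1},X_{i21},X_{i22}$ are whole trees) do not correspond to positions in $\mathcal{T}_{jm}$ for $X$; for $i\ge 1$ the order $\prec_X$ only sees the entire reassembled tree at position $i$, which must be compared with $X_i$ under $<$. Since $<$ compares squash, then length (longer trees are \emph{smaller}), then the parts $U_E,U_{O1},U_{O2}$, a decrease in the tail that moves length out of the tree at position $i$ makes that tree $<$-larger while another position becomes smaller, so whether the first difference under $\prec_X$ is a decrease depends on exactly the analysis you postpone. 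The paper avoids induction altogether: it encodes $\F\left(X\right)$ and $Z$ as assemblies of the same leaves and length-two blocks, reads the chain of moves as a signed permutation of these blocks, decomposes it into cycles permuting subtrees of equal squash with at most one leaf per cycle, shows (using strong right alignment to exclude the position-$1$ versus $21$ or $22$ pathology) that every positive-length subtree moves either to a $\prec_X$-smaller position or under a parent with a smaller node label, and concludes that every nontrivial cycle must place a positive-length tree at a former leaf position, which is what yields $\E\left(Z\right)\prec_X X$. To salvage your approach you would need to actually prove the locality claim (handling the first tree and the leaf exchanges) and then carry out the order transfer; neither step is routine.
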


\begin{proof}
Let $x_0x_1\cdots x_j=\overline{X}$. Suppose that
$2l=\ell\left(X\right)$ and let
$\left\{a_i,b_i,c_i\mid 1\le i\le l\right\}$
be such that
$\vcenter{\begin{xy}<.2cm,0cm>:
(2,3)="1"*+!U{_1};
"1";(1,2)*+!U{\color{red}a_i}**\dir{-};
"1";(4,2)="2"*+!U{_2}**\dir{-};
"2";(3,1)*+!U{\color{red}b_i}**\dir{-};
"2";(5,1)*+!U{\color{red}c_i}**\dir{-};
\end{xy}}$
is the tree of positive length in some
representative of the edge $e_i$ for all $1\le i\le l$
where $e_1\circ e_2\circ\cdots\circ e_l=\p\left(\F\left(X\right)\right)$.
Then any $Z\in L_{n+1}$ such that
$Z\sim\F\left(X\right)$ can be assembled from the trees 
\begin{equation}\label{Steckdosen}
{\color{red}x_0},{\color{red}x_1},\ldots,{\color{red}x_j},
\vcenter{\begin{xy}<.2cm,0cm>:
(2,3)="1"*+!U{_1};
"1";(1,2)*+!U{\color{red}a_1}**\dir{-};
"1";(4,2)="2"*+!U{_2}**\dir{-};
"2";(3,1)*+!U{\color{red}b_1}**\dir{-};
"2";(5,1)*+!U{\color{red}c_1}**\dir{-};
\end{xy}},
\vcenter{\begin{xy}<.2cm,0cm>:
(2,3)="1"*+!U{_3};
"1";(1,2)*+!U{\color{red}a_2}**\dir{-};
"1";(4,2)="2"*+!U{_4}**\dir{-};
"2";(3,1)*+!U{\color{red}b_2}**\dir{-};
"2";(5,1)*+!U{\color{red}c_2}**\dir{-};
\end{xy}},\ldots,
\vcenter{\begin{xy}<.2cm,0cm>:
(2,3)="1"*+!D{_{2l-1}};
"1";(1,2)*+!U{\color{red}a_l}**\dir{-};
"1";(4,2)="2"**\dir{-}*+!D{_{2l}};
"2";(3,1)*+!U{\color{red}b_l}**\dir{-};
"2";(5,1)*+!U{\color{red}c_l}**\dir{-};
\end{xy}}
\end{equation}
by replacing a leaf of value
$a_i+b_i+c_i$ in \autoref{Steckdosen} with
$\vcenter{\begin{xy}<.2cm,0cm>:
(2,3)="1"*+!D{_{2i-1}};
"1";(1,2)*+!U{\color{red}a_i}**\dir{-};
"1";(4,2)="2"**\dir{-}*+!D{_{2i}};
"2";(3,1)*+!U{\color{red}b_i}**\dir{-};
"2";(5,1)*+!U{\color{red}c_i}**\dir{-};
\end{xy}}$ or its mirror image
for all $1\le i\le l$.
This sequence of replacements defines
an injective function $\left\{1,2,\ldots,l\right\}\to
\left\{1,-1\right\}\times\left\{1,2,\ldots,j+3l\right\}$.
Viewing $\F\left(X\right)$ and $Z$ as functions in this way,
the sequence of moves \hyperref[MoveOne]{(P1)}--\hyperref[MoveThree]{(P3)}
transforming $\F\left(X\right)$ into $Z$ is equivalent to
an element of $\mathfrak{S}_2\wr\mathfrak{S}_{j+3l}$
which we view as a signed permutation
of $\left\{1,2,\ldots,j+3l\right\}$.
Decomposing this permutation into a product
of disjoint cycles, we find that each cycle permutes
a set of subtrees of equal squash.
Note that the set of trees permuted by such a cycle contains at most one leaf,
since cycles containing more than one leaf can be further decomposed.

Since these cycles act on disjoint sets of subtrees,
we can assume that the sequence of 
moves \hyperref[MoveOne]{(P1)}--\hyperref[MoveThree]{(P3)}
transforming $\F\left(X\right)$ into $Z$ is a single cycle permuting
subtrees of the same squash, at most one of which being a leaf.
Suppose that the cycle moves a subtree $U$ of positive length
to the position of a subtree $V$.
We consider seperately the case that $V$ has no parent
and the case that $V$ has a parent.

If $V$ has no parent, then we can assume
that $U$ has a parent by the assumption that $\left[Z\right]_B\ne
\left[\F\left(X\right)\right]_B$.
Then the squash of the tree containing
$U$ is strictly greater than $\overline{V}=\overline{U}$ so that
the position of $V$ is strictly smaller with respect to $\prec_X$
than the position of the tree containing $U$.
On the other hand, suppose that $V$ has a parent.
Observe that $U$ also has a parent in this case, since
otherwise $\F$ would have assigned a smaller node label to $U$
making it impossible for $U$ to move to the position of $V$.
Then the parent of $V$ has a smaller node label than the parent of $U$.
Here we use the fact that $X$ is strongly right aligned, since otherwise
it would be possible for $U$ to be in position $1$ and $V$ in position
$21$ or $22$ of some subtree, in which case the parent of $V$ would
have a larger node label than the parent of $U$.

In summary, the cycle moves each subtree of positive length either
to a smaller position with respect to $\prec_X$
or to a position whose parent has a smaller node label
than the parent of the subtree.
We conclude that the cycle must contain a leaf
that it replaces with a tree of positive length,
resulting in a forest less than $X$
with respect to $\prec_X$ after applying $\E$.
\end{proof}

\section{Proof of the quiver}\label{ProofQuiverSection}
Recall that the vertices and edges of the quiver $Q_n$ are defined
to be certain elements of $\mathcal{L}_{n+1}$ and that
$\iota:kQ_n\to k\mathcal{L}_{n+1}$
is the anti-homomorphism given by replacing
the product $\circ$ in $kQ_n$ with
the product $\bullet$ in $k\mathcal{L}_{n+1}$.
\begin{proposition}\label{IotaSurjectiveModKernel}
The map $kQ_n\to k\mathcal{L}_{n+1}/\ker\Delta$ induced from
$\iota$ is surjective.
\end{proposition}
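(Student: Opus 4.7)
The plan is to show that every B-orbit of a labeled forest in $\mathcal{L}_{n+1}$ lies in $\iota(kQ_n)$ modulo $\ker\Delta$. By \autoref{MakeRightAligned} it suffices to handle B-orbits $[X]_B$ of strongly right aligned labeled forests $X \in L_{n+1}$. Inspecting the formula in \autoref{DeltaBStreet} shows that $\Delta[X]_B$ depends only on the unlabeled shape $\E(X)$, since the labels never enter the formula; hence any two strongly right aligned labeled forests with the same image under $\E$ are congruent modulo $\ker\Delta$. In particular $[X]_B \equiv [\F(\E(X))]_B \pmod{\ker\Delta}$, which reduces the task to showing $[\F(X_0)]_B \in \iota(kQ_n) + \ker\Delta$ for every strongly right aligned unlabeled forest $X_0 \in M_{n+1}$.

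I would induct on $X_0$ using a well-founded total order on admissible unlabeled forests with fixed number of top-level trees and fixed depth, chosen as a common linear extension of the lex orders $\prec_X$. The base case is $\ell(X_0) = 0$, in which case $X_0$ consists only of leaves, $\p(\F(X_0))$ is by definition the trivial path at the vertex $[X_0]_B$ (via the labeling of \autoref{IdentifyVertices}), and $\iota(\p(\F(X_0))) = [X_0]_B = [\F(X_0)]_B$.

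For the inductive step, assuming $\ell(X_0) > 0$, I would apply \autoref{IotaWiggle} to write
\[
\iota(\p(\F(X_0))) = [\F(X_0)]_B + \sum_{\substack{[Y]_B \sim [\F(X_0)]_B \\ [Y]_B \ne [\F(X_0)]_B}} [Y]_B,
\]
so that isolating $[\F(X_0)]_B$ reduces the problem to placing each remaining $[Y]_B$ in $\iota(kQ_n) + \ker\Delta$. By \autoref{FMinimum} each such $Y$ satisfies $\E(Y) \prec_X X_0$. Reducing $[Y]_B$ to a linear combination of B-orbits of strongly right aligned forests via \autoref{MakeRightAligned} and invoking the inductive hypothesis on each of those then finishes the step.

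The principal obstacle is closing the induction: one must verify that each strongly right aligned B-orbit produced by \autoref{MakeRightAligned} when applied to $[Y]_B$ has unlabeled shape strictly less than $X_0$ in the chosen well-founded order. This amounts to tracking how the Jacobi-type moves used in the proofs of \autoref{OddLemma} and \autoref{StronglyAligned} act on individual subtrees of fixed squash without altering the number of top-level trees or the depth of the forest, and confirming that the shapes they produce can never rise above $\E(Y)$ in the lex order, hence remain strictly below $X_0$.
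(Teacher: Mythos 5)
Your proposal follows the paper's own proof essentially step for step: reduce to strongly right aligned forests via \autoref{MakeRightAligned}, replace $X$ by $\F\left(\E\left(X\right)\right)$, peel off $\left[X\right]_B$ from $\iota\left(\p\left(X\right)\right)$ using \autoref{IotaWiggle}, bound the leftover terms by \autoref{FMinimum}, and finish by induction along $\prec_X$. Your justification of the relabeling step (that $\Delta\left[X\right]_B$ is read off from \autoref{DeltaBStreet} and depends only on $\E\left(X\right)$, so forests with the same unlabeled shape agree modulo $\ker\Delta$) is correct and if anything more explicit than the paper's one-line remark that the operation ``only relabels the nodes.'' The ``principal obstacle'' you leave open --- that the terms $\left[Y\right]_B$ with $\left[Y\right]_B\sim\left[X\right]_B$ are in general not right aligned, so they must first be re-rendered by \autoref{MakeRightAligned}, and one must check that the resulting strongly right aligned shapes stay strictly below $X_0$ so the recursion terminates --- is precisely the content that the paper compresses into ``we can repeat the argument for each term $\left[Y\right]_B$, taking the order $\prec_Y$ to be $\prec_X$ in each case \ldots by induction''; the paper supplies no more detail there than you do, so relative to the paper your argument is essentially the same and essentially as complete. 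One small correction: a ``common linear extension of the lex orders $\prec_X$'' cannot be chosen, since for different forests these are already total (and mutually incompatible) orders; what the paper does, and what your argument actually uses, is to fix the single order $\prec_{X_0}$ attached to the starting forest and measure every subsequently produced term against that one order.
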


\begin{proof}
By \autoref{MakeRightAligned} it suffices to show that
$\left[X\right]_B\equiv\iota\left(P\right)\pmod{\ker\Delta}$
for some $P\in kQ_n$ where $X\in L_{n+1}$
is any strongly right aligned labeled forest.
First we replace $X$ with $\F\left(\E\left(X\right)\right)$.
Since this operation only relabels the nodes of $X$
it results in a forest equivalent to $X$ modulo $\ker\Delta$.
Putting $P=\p\left(X\right)$ we have $\iota\left(P\right)
=\sum_{\left[Y\right]_B\sim\left[X\right]_B}\left[Y\right]_B$
by \autoref{IotaWiggle}.
Let $\mathcal{Y}=\iota\left(P\right)-\left[X\right]_B$.
We observe that under $\E$ all the terms of $\mathcal{Y}$ are
smaller than $\left[\E\left(X\right)\right]_B$ with respect to $\prec_X$
by \autoref{FMinimum}.
We can repeat the argument for each term $\left[Y\right]_B$ of $\mathcal{Y}$,
taking the order $\prec_Y$ to be $\prec_X$ in each case.
Then subtracting these results from $P$ results in an element of $kQ_n$
mapping to $\left[X\right]_B+\ker\Delta$ under $\iota$ by induction.
\end{proof}

\begin{theorem}\label{ExtQuiver}
$Q_n$ is the ordinary quiver of $\Sigma\left(W_n\right)$.
\end{theorem}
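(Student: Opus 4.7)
The plan is to compare the natural map $\iota\colon kQ_n\to A := k\mathcal{L}_{n+1}/\ker\Delta\cong\Sigma(W_n)^{\mathsf{op}}$ with the radical filtration of $A$ and do a blockwise dimension count. By \autoref{GotzSummary} and \autoref{IdentifyVertices}, the vertices of $Q_n$ are a complete set of primitive orthogonal idempotents $\{e_p\}$ of $A$, and the theorem reduces to showing that for every ordered pair $(p,q)$ the images of the edges of $Q_n$ from $p$ to $q$ form a basis of $e_q(\rad(A)/\rad^2(A))e_p$. The key preliminary observation is that $e_qAe_p$ is graded by forest length: every B-orbit with foliage $p$ and squash $q$ has length equal to the common value $d(p,q)=(\text{number of parts of }p)-(\text{number of parts of }q)$. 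By \autoref{DeltaBStreet}, $e_qAe_p=0$ when $d(p,q)$ is odd, and $e_pAe_p=k\,e_p$. Moreover, since length-one orbits vanish in $A$, any element of $e_q\rad^2(A)e_p$ is a sum of $\bullet$-products of two positive-length B-orbits each of length at least two, hence has forest length at least four.

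When $d(p,q)=2$ the preceding forces $e_q\rad^2(A)e_p=0$, so $e_q(\rad(A)/\rad^2(A))e_p=e_q\rad(A)e_p$. The case analysis preceding \autoref{EdgeNotInKernel} (and the formulas \autoref{GreenArrow} and \autoref{GreenEdge}) shows that every length-two B-orbit from $p$ to $q$ lies, modulo $\ker\Delta$, in the $k$-span of the edges of $Q_n$ from $p$ to $q$; \autoref{EdgeNotInKernel} then establishes that those edges are linearly independent modulo $\ker\Delta$, so they form the required basis.

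When $d(p,q)\geq 4$ I would show $e_q\rad(A)e_p=e_q\rad^2(A)e_p$, which gives $e_q(\rad(A)/\rad^2(A))e_p=0$ and hence no arrows from $p$ to $q$ in the ordinary quiver. Given a labeled forest $X$ of length $l=d(p,q)$ from $p$ to $q$, \autoref{MakeRightAligned} lets me assume $X$ is strongly right aligned; since by \autoref{DeltaBStreet} the value of $\Delta[{-}]_B$ depends only on the unlabeled structure, I may further relabel and assume $X=\F(\E(X))$ without changing the $\ker\Delta$-class. \autoref{IotaWiggle} then gives $\iota(\p(X))=\sum_{[Y]_B\sim[X]_B}[Y]_B$, and since $\p(X)$ is a composition of $l/2\geq 2$ edges of $Q_n$, its image $\iota(\p(X))$ lies in $\rad^2(A)$. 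By \autoref{FMinimum}, every term $[Y]_B\ne[X]_B$ in the sum satisfies $\E(Y)\prec_{\E(X)}\E(X)$; since $\sim$ preserves forest length and the number-of-trees/depth class, fixing a single compatible order $\prec$ throughout the argument, I induct on $\prec$-position to conclude each such $[Y]_B$ is already in $\rad^2(A)$, and subtracting yields $[X]_B\in\rad^2(A)$ as well.

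The main obstacle is orchestrating this last induction: one must verify that the reductions via \autoref{MakeRightAligned} and $\F\circ\E$ preserve the $\ker\Delta$-class and stay within the length-$l$ stratum, that a single compatible order $\prec$ can be used consistently across all subsequent recursive calls (mirroring the device used in the proof of \autoref{IotaSurjectiveModKernel}), and that the base case—the $\prec$-minimal strongly right aligned forest in the stratum—is handled because the sum in \autoref{IotaWiggle} collapses there to the single term $[X]_B=\iota(\p(X))$, visibly in $\rad^2(A)$.
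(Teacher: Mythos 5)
Your proposal is correct in substance but takes a genuinely different route from the paper. The paper proves \autoref{ExtQuiver} in three steps: (i) the induced map $kQ_n\to k\mathcal{L}_{n+1}/\ker\Delta$ is surjective (\autoref{IotaSurjectiveModKernel}); (ii) any element of $I=\iota^{-1}\left(\ker\Delta\right)$ is supported on paths of length at least two, because vertices survive $\Delta$ and edges with a common source and destination are linearly independent modulo $\ker\Delta$ (\autoref{EdgeNotInKernel}), so $I\subseteq R^2$; and (iii) it invokes the general fact (Lemma~3.6 of \cite{blue}) that a quotient of $kQ_n$ by an ideal inside the square of the arrow ideal has ordinary quiver $Q_n$. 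You instead inline that general fact, computing $e_q\left(\rad\left(A\right)/\rad^2\left(A\right)\right)e_p$ block by block in $A\cong\Sigma\left(W_n\right)^{\mathsf{op}}$ using the length-homogeneity of the blocks; your $d(p,q)=2$ case coincides with the paper's ingredient (ii) (spanning from the case analysis of \autoref{QuiverSection} via \autoref{GreenArrow} and \autoref{GreenEdge}, independence from \autoref{EdgeNotInKernel}) and is fine. The one inefficiency is the $d(p,q)\ge 4$ case: there you re-derive, inside a block, precisely the induction that constitutes the proof of \autoref{IotaSurjectiveModKernel} (via \autoref{MakeRightAligned}, \autoref{IotaWiggle}, \autoref{FMinimum}), which is exactly where you flag the delicate orchestration of the order $\prec$. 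You can avoid this entirely by citing \autoref{IotaSurjectiveModKernel} as a black box: surjectivity shows $e_qAe_p$ is spanned by images of paths from $p$ to $q$, every such path has exactly $d(p,q)/2\ge 2$ edges since each edge removes two parts, and each edge maps into $\rad\left(A\right)$, so $e_qAe_p\subseteq\rad^2\left(A\right)$ at once. With that substitution your blockwise $\mathrm{Ext}^1$ computation is a clean, self-contained alternative (modulo the same source/target and opposite-algebra bookkeeping the paper also leaves implicit); what the paper's route buys is that all combinatorial work is quarantined in \autoref{IotaSurjectiveModKernel} and the quiver identification becomes a short formal consequence of the cited lemma, while yours makes the radical computation explicit and dispenses with the external citation.
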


\begin{proof} Let $I=\iota^{-1}\left(\ker\Delta\right)$
so that $kQ_n/I\cong\iota\left(kQ_n\right)/\ker\Delta$.
But $\iota\left(kQ_n\right)/\ker\Delta
=k\mathcal{L}_{n+1}/\ker\Delta$
by \autoref{IotaSurjectiveModKernel} and
$k\mathcal{L}_{n+1}/\ker\Delta\cong
\Sigma\left(W_n\right)^\mathsf{op}$ by \autoref{GotzSummary}.
Let $R$ be the Jacobson radical of $kQ_n$. Then $R$ is generated
by all paths in $Q_n$ of positive length.
Since $Q_n$ is the ordinary quiver of any quotient of $kQ_n$ by
an ideal contained in $R^2$ by \cite[Lemma~3.6]{blue}
it suffices to show that $I\subseteq R^2$.

Let $P$ be any element of $I$.
By multiplying $P$ on the left and on the right
by various vertices of $Q_n$ we can split $P$ into a sum
of elements of $I$ all of whose terms have the same source
and destination. We can therefore assume that
all the terms of $P$ have the same source and destination and hence the
same length. If this length were zero or one, then 
$P$ would be a vertex or a linear combination of edges.
But $\Delta\left(p\right)=p\ne 0$
for all vertices $p$ of $Q_n$ while
no linear combination of edges can be in $\ker\Delta$
by \autoref{EdgeNotInKernel}. Therefore $P\in R^2$.
\end{proof}

\section{Examples of complete presentations}
The quiver $Q_6$ is shown
in \autoref{OddB6} and \autoref{EvenB6}.
We observe that $Q_6$ has 30 vertices
corresponding with the $30$ partitions of the numbers $0,1,\ldots,6$.
Note that the vertices $1^4,1^5,1^6,2^2,2^3,3^2$ are not shown,
not being incident with any edges of $Q_6$.
We count $28$ paths of length one and $7$ paths of length two
so that $\dim\left(kQ_6\right)=30+28+7=65$.
Since $\Sigma\left(W_6\right)$ has dimension $2^6=64$
the presentation must have a single relation. We know that the paths
in the relation must have the same source and destination
since $\Delta$ is an anti-homomorphism.
Then since the edges of $Q_6$ are linearly
independent in $kQ_6/\iota^{-1}\left(\ker\Delta\right)$
by \autoref{EdgeNotInKernel} the relation
must be among paths of length two. The only possibility
is that the relation is among paths going from $1122$ to
the empty partition. We calculate this relation directly as follows.

From the definition of $Q_n$ we write down the following forests.
\begin{alignat*}{2}
c&=\left[
\vcenter{\begin{xy}<.2cm,0cm>:
(2,3)="1"*+!U{_1};
"1";(1,2)*+!U{\color{red}1}**\dir{-};
"1";(4,2)="2"*+!U{_2}**\dir{-};
"2";(3,1)*+!U{\color{red}1}**\dir{-};
"2";(5,1)*+!U{\color{red}5}**\dir{-};
\end{xy}}\right]_B&\qquad
d&=\left[
\vcenter{\begin{xy}<.2cm,0cm>:
(2,3)="1"*+!U{_1};
"1";(1,2)*+!U{\color{red}1}**\dir{-};
"1";(4,2)="2"*+!U{_2}**\dir{-};
"2";(3,1)*+!U{\color{red}2}**\dir{-};
"2";(5,1)*+!U{\color{red}4}**\dir{-};
\end{xy}}\right]_B\\
a&=\left[
\vcenter{\begin{xy}<.2cm,0cm>:
(1,3)*+!U{\color{red}1};
(3,3)="1"*+!U{_1};
"1";(2,2)*+!U{\color{red}2}**\dir{-};
"1";(5,2)="2"*+!U{_2}**\dir{-};
"2";(4,1)*+!U{\color{red}1}**\dir{-};
"2";(6,1)*+!U{\color{red}2}**\dir{-};
(7,3)*+!U{\color{red}1};
\end{xy}}\right]_B&
b&=\left[
\vcenter{\begin{xy}<.2cm,0cm>:
(1,3)*+!U{\color{red}1};
(3,3)="1"*+!U{_1};
"1";(2,2)*+!U{\color{red}1}**\dir{-};
"1";(5,2)="2"*+!U{_2}**\dir{-};
"2";(4,1)*+!U{\color{red}1}**\dir{-};
"2";(6,1)*+!U{\color{red}2}**\dir{-};
(7,3)*+!U{\color{red}2};
\end{xy}}\right]_B
\end{alignat*}
Multiplying we have
\begin{align*}
ac&=\left[
\vcenter{\begin{xy}<.2cm,0cm>:
(2,5)="1"*+!U{_1};
"1";(1,4)*+!U{\color{red}1}**\dir{-};
"1";(4,4)="2"*+!U{_2}**\dir{-};
"2";(3,3)*+!U{\color{red}1}**\dir{-};
"2";(6,3)="3"*+!U{_3}**\dir{-};
"3";(5,2)*+!U{\color{red}2}**\dir{-};
"3";(8,2)="4"*+!U{_4}**\dir{-};
"4";(7,1)*+!U{\color{red}1}**\dir{-};
"4";(9,1)*+!U{\color{red}2}**\dir{-};
\end{xy}}\right]_B
+\left[\vcenter{\begin{xy}<.2cm,0cm>:
(2,5)="1"*+!U{_1};
"1";(1,4)*+!U{\color{red}1}**\dir{-};
"1";(4,4)="2"*+!U{_2}**\dir{-};
"2";(3,3)*+!U{\color{red}1}**\dir{-};
"2";(8,3)="3"*+!U{_3}**\dir{-};
"3";(6,2)="4"*+!U{_4}**\dir{-};
"4";(5,1)*+!U{\color{red}2}**\dir{-};
"4";(7,1)*+!U{\color{red}1}**\dir{-};
"3";(9,2)*+!U{\color{red}2}**\dir{-};
\end{xy}}\right]_B\\
bd&=\left[
\vcenter{\begin{xy}<.2cm,0cm>:
(2,5)="1"*+!U{_1};
"1";(1,4)*+!U{\color{red}1}**\dir{-};
"1";(4,4)="2"*+!U{_2}**\dir{-};
"2";(3,3)*+!U{\color{red}2}**\dir{-};
"2";(6,3)="3"*+!U{_3}**\dir{-};
"3";(5,2)*+!U{\color{red}1}**\dir{-};
"3";(8,2)="4"*+!U{_4}**\dir{-};
"4";(7,1)*+!U{\color{red}1}**\dir{-};
"4";(9,1)*+!U{\color{red}2}**\dir{-};
\end{xy}}\right]_B
+\left[\vcenter{\begin{xy}<.2cm,0cm>:
(2,5)="1"*+!U{_1};
"1";(1,4)*+!U{\color{red}1}**\dir{-};
"1";(4,4)="2"*+!U{_2}**\dir{-};
"2";(3,3)*+!U{\color{red}2}**\dir{-};
"2";(8,3)="3"*+!U{_3}**\dir{-};
"3";(6,2)="4"*+!U{_4}**\dir{-};
"4";(5,1)*+!U{\color{red}2}**\dir{-};
"4";(7,1)*+!U{\color{red}1}**\dir{-};
"3";(9,2)*+!U{\color{red}1}**\dir{-};
\end{xy}}\right]_B
\end{align*}
so that
\begin{align*}
\Delta\left(ac\right)&=2\pi\left[
\vcenter{\begin{xy}<.2cm,0cm>:
(1,4)*+!U{\color{red}1};
(3,4)="1";
"1";(2,3)*+!U{\color{red}1}**\dir{-};
"1";(5,3)="2"**\dir{-};
"2";(4,2)*+!U{\color{red}2}**\dir{-};
"2";(7,2)="3"**\dir{-};
"3";(6,1)*+!U{\color{red}1}**\dir{-};
"3";(8,1)*+!U{\color{red}2}**\dir{-};
\end{xy}}\right]_B
+2\pi\left[\vcenter{\begin{xy}<.2cm,0cm>:
(1,4)*+!U{\color{red}1};
(3,4)="1";
"1";(2,3)*+!U{\color{red}1}**\dir{-};
"1";(7,3)="2"**\dir{-};
"2";(5,2)="3"**\dir{-};
"3";(4,1)*+!U{\color{red}2}**\dir{-};
"3";(6,1)*+!U{\color{red}1}**\dir{-};
"2";(8,2)*+!U{\color{red}2}**\dir{-};
\end{xy}}\right]_B
=4\pi\left[
\vcenter{\begin{xy}<.2cm,0cm>:
(1,4)*+!U{\color{red}1};
(3,4)="1";
"1";(2,3)*+!U{\color{red}1}**\dir{-};
"1";(5,3)="2"**\dir{-};
"2";(4,2)*+!U{\color{red}2}**\dir{-};
"2";(7,2)="3"**\dir{-};
"3";(6,1)*+!U{\color{red}1}**\dir{-};
"3";(8,1)*+!U{\color{red}2}**\dir{-};
\end{xy}}\right]_B\\
\Delta\left(bd\right)&=2\pi\left[
\vcenter{\begin{xy}<.2cm,0cm>:
(1,4)*+!U{\color{red}1};
(3,4)="1";
"1";(2,3)*+!U{\color{red}2}**\dir{-};
"1";(5,3)="2"**\dir{-};
"2";(4,2)*+!U{\color{red}1}**\dir{-};
"2";(7,2)="3"**\dir{-};
"3";(6,1)*+!U{\color{red}1}**\dir{-};
"3";(8,1)*+!U{\color{red}2}**\dir{-};
\end{xy}}\right]_B
+2\pi\left[\vcenter{\begin{xy}<.2cm,0cm>:
(1,4)*+!U{\color{red}1};
(3,4)="1";
"1";(2,3)*+!U{\color{red}2}**\dir{-};
"1";(7,3)="2"**\dir{-};
"2";(5,2)="3"**\dir{-};
"3";(4,1)*+!U{\color{red}2}**\dir{-};
"3";(6,1)*+!U{\color{red}1}**\dir{-};
"2";(8,2)*+!U{\color{red}1}**\dir{-};
\end{xy}}\right]_B
=4\pi\left[
\vcenter{\begin{xy}<.2cm,0cm>:
(1,4)*+!U{\color{red}1};
(3,4)="1";
"1";(2,3)*+!U{\color{red}2}**\dir{-};
"1";(5,3)="2"**\dir{-};
"2";(4,2)*+!U{\color{red}1}**\dir{-};
"2";(7,2)="3"**\dir{-};
"3";(6,1)*+!U{\color{red}1}**\dir{-};
"3";(8,1)*+!U{\color{red}2}**\dir{-};
\end{xy}}\right]_B
\end{align*}
by \autoref{DeltaBStreet}. We find that
$ac-bd\in\ker\Delta$
either by direct calculation, or by observing that
\begin{equation*}
0=
\pi\left(
\vcenter{\begin{xy}<.2cm,0cm>:
(2,4)="1";
"1";(1,3)*+!U{\color{red}1}**\dir{-};
"1";(4,3)="2"**\dir{-};
"2";(3,2)*+!U{\color{red}2}**\dir{-};
"2";(6,2)="3"**\dir{-};
"3";(5,1)*+!U{\color{red}1}**\dir{-};
"3";(7,1)*+!U{\color{red}2}**\dir{-};
\end{xy}}
+\vcenter{\begin{xy}<.2cm,0cm>:
(4,3)="1";
"1";(2,2)="2"**\dir{-};
"2";(1,1)*+!U{\color{red}1}**\dir{-};
"2";(3,1)*+!U{\color{red}2}**\dir{-};
"1";(6,2)="3"**\dir{-};
"3";(5,1)*+!U{\color{red}1}**\dir{-};
"3";(7,1)*+!U{\color{red}2}**\dir{-};
\end{xy}}
+\vcenter{\begin{xy}<.2cm,0cm>:
(2,4)="1";
"1";(1,3)*+!U{\color{red}2}**\dir{-};
"1";(6,3)="2"**\dir{-};
"2";(4,2)="3"**\dir{-};
"3";(3,1)*+!U{\color{red}1}**\dir{-};
"3";(5,1)*+!U{\color{red}2}**\dir{-};
"2";(7,2)*+!U{\color{red}1}**\dir{-};
\end{xy}}\right)
=\pi\left(
\vcenter{\begin{xy}<.2cm,0cm>:
(2,4)="1";
"1";(1,3)*+!U{\color{red}1}**\dir{-};
"1";(4,3)="2"**\dir{-};
"2";(3,2)*+!U{\color{red}2}**\dir{-};
"2";(6,2)="3"**\dir{-};
"3";(5,1)*+!U{\color{red}1}**\dir{-};
"3";(7,1)*+!U{\color{red}2}**\dir{-};
\end{xy}}
-\vcenter{\begin{xy}<.2cm,0cm>:
(2,4)="1";
"1";(1,3)*+!U{\color{red}2}**\dir{-};
"1";(4,3)="2"**\dir{-};
"2";(3,2)*+!U{\color{red}1}**\dir{-};
"2";(6,2)="3"**\dir{-};
"3";(5,1)*+!U{\color{red}1}**\dir{-};
"3";(7,1)*+!U{\color{red}2}**\dir{-};
\end{xy}}\right)
\end{equation*}
by the Jacobi relation.
We conclude that $\Sigma\left(W_6\right)\cong kQ_6/\left(ac-bd\right)$.

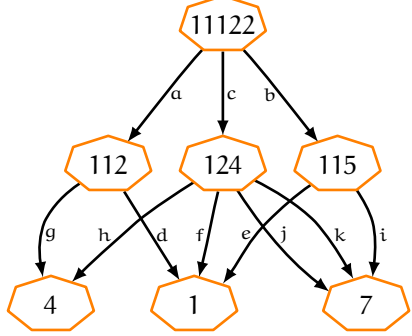
\begin{figure}
\caption{Part of the odd part of $Q_7$}\label{OddB7}
\begin{tikzpicture}[>=latex,join=bevel,scale=.60]
\pgfsetlinewidth{1bp}
\pgfsetcolor{black}
\draw [->] (131bp,178bp) .. controls (119bp,165bp) and (103bp,144bp)  .. (84bp,120bp);
\draw (115bp,150bp) node {$_a$};
\draw [->] (199bp,93bp) .. controls (190bp,87bp) and (180bp,78bp)  .. (172bp,70bp) .. controls (160bp,58bp) and (156bp,48bp)  .. (144bp,31bp);
\draw (159bp,62bp) node {$_e$};
\draw [->] (82bp,89bp) .. controls (90bp,76bp) and (102bp,57bp)  .. (117bp,33bp);
\draw (106bp,62bp) node {$_d$};
\draw [->] (164bp,96bp) .. controls (176bp,90bp) and (191bp,81bp)  .. (202bp,70bp) .. controls (209bp,62bp) and (216bp,52bp)  .. (226bp,34bp);
\draw (218bp,62bp) node {$_k$};
\draw [->] (153bp,90bp) .. controls (164bp,73bp) and (181bp,49bp)  .. (211bp,26bp);
\draw (183bp,62bp) node {$_j$};
\draw [->] (228bp,90bp) .. controls (232bp,84bp) and (236bp,77bp)  .. (238bp,70bp) .. controls (240bp,62bp) and (240bp,53bp)  .. (238bp,35bp);
\draw (245bp,62bp) node {$_i$};
\draw [->] (126bp,95bp) .. controls (115bp,88bp) and (101bp,79bp)  .. (90bp,70bp) .. controls (79bp,60bp) and (66bp,49bp)  .. (49bp,32bp);
\draw (70bp,62bp) node {$_h$};
\draw [->] (157bp,178bp) .. controls (169bp,165bp) and (185bp,144bp)  .. (204bp,120bp);
\draw (174bp,150bp) node {$_b$};
\draw [->] (54bp,94bp) .. controls (45bp,88bp) and (37bp,80bp)  .. (32bp,70bp) .. controls (29bp,62bp) and (28bp,53bp)  .. (31bp,35bp);
\draw (36bp,62bp) node {$_g$};
\draw [->] (144bp,177bp) .. controls (144bp,165bp) and (144bp,149bp)  .. (144bp,124bp);
\draw (150bp,150bp) node {$_c$};
\draw [->] (141bp,89bp) .. controls (138bp,77bp) and (135bp,60bp)  .. (129bp,35bp);
\draw (130bp,62bp) node {$_f$};
\begin{scope}\pgfsetcolor{orange}
\draw (153bp,14bp) -- (148bp,29bp) -- (126bp,36bp) -- (104bp,29bp) -- (99bp,14bp) -- (114bp,2bp) -- (138bp,2bp) -- cycle;
\draw (126bp,18bp) node {$1$};
\end{scope}
\begin{scope}\pgfsetcolor{orange}
\draw (243bp,102bp) -- (238bp,117bp) -- (216bp,124bp) -- (194bp,117bp) -- (189bp,102bp) -- (204bp,90bp) -- (228bp,90bp) -- cycle;
\draw (216bp,106bp) node {$115$};
\end{scope}
\begin{scope}\pgfsetcolor{orange}
\draw (171bp,190bp) -- (166bp,205bp) -- (144bp,212bp) -- (122bp,205bp) -- (117bp,190bp) -- (132bp,178bp) -- (156bp,178bp) -- cycle;
\draw (144bp,194bp) node {$11122$};
\end{scope}
\begin{scope}\pgfsetcolor{orange}
\draw (171bp,102bp) -- (166bp,117bp) -- (144bp,124bp) -- (122bp,117bp) -- (117bp,102bp) -- (132bp,90bp) -- (156bp,90bp) -- cycle;
\draw (144bp,106bp) node {$124$};
\end{scope}
\begin{scope}\pgfsetcolor{orange}
\draw (63bp,14bp) -- (58bp,29bp) -- (36bp,36bp) -- (14bp,29bp) -- (9bp,14bp) -- (24bp,2bp) -- (48bp,2bp) -- cycle;
\draw (36bp,18bp) node {$4$};
\end{scope}
\begin{scope}\pgfsetcolor{orange}
\draw (261bp,14bp) -- (256bp,29bp) -- (234bp,36bp) -- (212bp,29bp) -- (207bp,14bp) -- (222bp,2bp) -- (246bp,2bp) -- cycle;
\draw (234bp,18bp) node {$7$};
\end{scope}
\begin{scope}\pgfsetcolor{orange}
\draw (99bp,102bp) -- (94bp,117bp) -- (72bp,124bp) -- (50bp,117bp) -- (45bp,102bp) -- (60bp,90bp) -- (84bp,90bp) -- cycle;
\draw (72bp,106bp) node {$112$};
\end{scope}
\end{tikzpicture}
\end{figure}
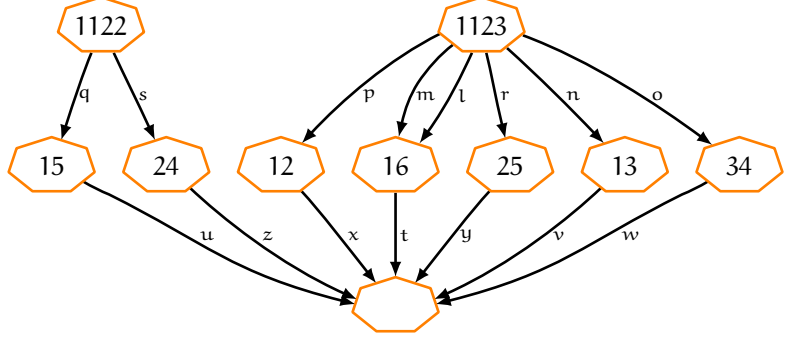
\begin{figure}
\caption{Part of the even part of $Q_7$}\label{EvenB7}
\begin{tikzpicture}[>=latex,join=bevel,scale=.60]
\pgfsetlinewidth{1bp}
\pgfsetcolor{black}
\draw [->] (279bp,182bp) .. controls (271bp,176bp) and (263bp,167bp)  .. (257bp,158bp) .. controls (252bp,150bp) and (249bp,142bp)  .. (245bp,124bp);
\draw (262bp,150bp) node {$_m$};
\draw [->] (291bp,177bp) .. controls (285bp,164bp) and (276bp,145bp)  .. (258bp,120bp);
\draw (285bp,150bp) node {$_l$};
\draw [->] (114bp,92bp) .. controls (133bp,74bp) and (170bp,44bp)  .. (219bp,22bp);
\draw (163bp,62bp) node {$_z$};
\draw [->] (243bp,89bp) .. controls (243bp,77bp) and (243bp,61bp)  .. (243bp,36bp);
\draw (249bp,62bp) node {$_t$};
\draw [->] (52bp,177bp) .. controls (47bp,165bp) and (41bp,147bp)  .. (33bp,122bp);
\draw (48bp,150bp) node {$_q$};
\draw [->] (184bp,90bp) .. controls (196bp,77bp) and (212bp,56bp)  .. (231bp,32bp);
\draw (217bp,62bp) node {$_x$};
\draw [->] (313bp,180bp) .. controls (333bp,162bp) and (353bp,142bp)  .. (374bp,120bp);
\draw (355bp,150bp) node {$_n$};
\draw [->] (439bp,96bp) .. controls (434bp,93bp) and (428bp,90bp)  .. (423bp,88bp) .. controls (361bp,57bp) and (347bp,39bp)  .. (268bp,19bp);
\draw (391bp,62bp) node {$_w$};
\draw [->] (271bp,189bp) .. controls (239bp,173bp) and (209bp,147bp)  .. (184bp,120bp);
\draw (227bp,150bp) node {$_p$};
\draw [->] (372bp,92bp) .. controls (353bp,74bp) and (316bp,44bp)  .. (267bp,22bp);
\draw (346bp,62bp) node {$_v$};
\draw [->] (300bp,177bp) .. controls (303bp,165bp) and (306bp,148bp)  .. (312bp,123bp);
\draw (312bp,150bp) node {$_r$};
\draw [->] (323bp,188bp) .. controls (366bp,172bp) and (408bp,144bp)  .. (442bp,119bp);
\draw (408bp,150bp) node {$_o$};
\draw [->] (66bp,177bp) .. controls (72bp,164bp) and (81bp,146bp)  .. (92bp,122bp);
\draw (85bp,150bp) node {$_s$};
\draw [->] (302bp,90bp) .. controls (290bp,77bp) and (274bp,56bp)  .. (255bp,32bp);
\draw (288bp,62bp) node {$_y$};
\draw [->] (47bp,96bp) .. controls (52bp,93bp) and (58bp,90bp)  .. (63bp,88bp) .. controls (125bp,57bp) and (139bp,39bp)  .. (218bp,19bp);
\draw (125bp,62bp) node {$_u$};
\begin{scope}\pgfsetcolor{orange}
\draw (126bp,102bp) -- (121bp,117bp) -- (99bp,124bp) -- (77bp,117bp) -- (72bp,102bp) -- (87bp,90bp) -- (111bp,90bp) -- cycle;
\draw (99bp,106bp) node {$24$};
\end{scope}
\begin{scope}\pgfsetcolor{orange}
\draw (342bp,102bp) -- (337bp,117bp) -- (315bp,124bp) -- (293bp,117bp) -- (288bp,102bp) -- (303bp,90bp) -- (327bp,90bp) -- cycle;
\draw (315bp,106bp) node {$25$};
\end{scope}
\begin{scope}\pgfsetcolor{orange}
\draw (414bp,102bp) -- (409bp,117bp) -- (387bp,124bp) -- (365bp,117bp) -- (360bp,102bp) -- (375bp,90bp) -- (399bp,90bp) -- cycle;
\draw (387bp,106bp) node {$13$};
\end{scope}
\begin{scope}\pgfsetcolor{orange}
\draw (198bp,102bp) -- (193bp,117bp) -- (171bp,124bp) -- (149bp,117bp) -- (144bp,102bp) -- (159bp,90bp) -- (183bp,90bp) -- cycle;
\draw (171bp,106bp) node {$12$};
\end{scope}
\begin{scope}\pgfsetcolor{orange}
\draw (54bp,102bp) -- (49bp,117bp) -- (27bp,124bp) -- (5bp,117bp) -- (0bp,102bp) -- (15bp,90bp) -- (39bp,90bp) -- cycle;
\draw (27bp,106bp) node {$15$};
\end{scope}
\begin{scope}\pgfsetcolor{orange}
\draw (270bp,14bp) -- (265bp,29bp) -- (243bp,36bp) -- (221bp,29bp) -- (216bp,14bp) -- (231bp,2bp) -- (255bp,2bp) -- cycle;
\end{scope}
\begin{scope}\pgfsetcolor{orange}
\draw (270bp,102bp) -- (265bp,117bp) -- (243bp,124bp) -- (221bp,117bp) -- (216bp,102bp) -- (231bp,90bp) -- (255bp,90bp) -- cycle;
\draw (243bp,106bp) node {$16$};
\end{scope}
\begin{scope}\pgfsetcolor{orange}
\draw (85bp,190bp) -- (80bp,205bp) -- (58bp,212bp) -- (36bp,205bp) -- (31bp,190bp) -- (46bp,178bp) -- (70bp,178bp) -- cycle;
\draw (58bp,194bp) node {$1122$};
\end{scope}
\begin{scope}\pgfsetcolor{orange}
\draw (324bp,190bp) -- (319bp,205bp) -- (297bp,212bp) -- (275bp,205bp) -- (270bp,190bp) -- (285bp,178bp) -- (309bp,178bp) -- cycle;
\draw (297bp,194bp) node {$1123$};
\end{scope}
\begin{scope}\pgfsetcolor{orange}
\draw (486bp,102bp) -- (481bp,117bp) -- (459bp,124bp) -- (437bp,117bp) -- (432bp,102bp) -- (447bp,90bp) -- (471bp,90bp) -- cycle;
\draw (459bp,106bp) node {$34$};
\end{scope}
\end{tikzpicture}
\end{figure}

We can also calculate the presentation of $\Sigma\left(W_7\right)$ by hand.
The parts of $Q_7$ relevant to the following discussion
are shown in \autoref{OddB7} and \autoref{EvenB7}.
Namely, only those edges that contribute to paths of length
at least two are shown. We have also eliminated any path
sharing its source and destination with no other path.
There conveniently remain exactly $26$
edges which we can label $a,b,\ldots,z$.
Calculating as in the calculation for $\Sigma\left(W_6\right)$
we find the following relations.
\begin{align*}
2mt+px-nv-2ry,
\qquad 2lt-2mt-nv+px+2ow,\\
be-cf,\qquad
ag-ch,\qquad
bi-cj,\quad
qu-sz
\end{align*}

\section{The relations}\label{ConjectureSection}
In this section we state our conjecture
about the generating set of the ideal of relations
of the presentation of $\Sigma\left(W_n\right)$.
This conjecture resulted from computer experiments
with the rank of $W_n$ as large as computationally realizable.

In order to state the conjecture, we introduce
a new monoid. Let $\mathcal{B}$ be the set of symbols
$\left\langle\begin{smallmatrix}
\accentset{\circ}{a}\\c\\d\end{smallmatrix}\right|$
and $\left\langle\begin{smallmatrix}b\\c\\d
\end{smallmatrix}\right|$
for all $a,b,c,d\in\mathbb{N}$ with $b\le d$ and $c<d$.
We call the free monoid $\mathcal{B}^\ast$ the {\em branch monoid}.
The two different types of symbols comprising
$\mathcal{B}$ correspond with
the two different types of edges of $Q_n$. Namely, the symbols
$\left\langle\begin{smallmatrix}
\accentset{\circ}{a}\\b\\c\end{smallmatrix}\right|$
correspond with edges of type~\hyperref[BlueEdge]{(Q1)} while the symbols
$\left\langle\begin{smallmatrix}a\\b\\c
\end{smallmatrix}\right|$ correspond with edges of types~\hyperref[GreenEdge1]{(Q2)} or 
\hyperref[GreenEdge2]{(Q3)} insofar as the possible
entries $a,b,c$ of a symbol coincide with the possible leaves
of the tree of positive length in the corresponding edge.

We define an action of
the branch monoid on $kQ_n$ as follows.
If $P$ is a path in $Q_n$ with source $p=p_1p_2\cdots p_j$ 
where $p_1,p_2,\ldots,p_j\in\mathbb{N}$ then let
$P.\left\langle\begin{smallmatrix}\accentset{\circ}{a}\\b\\c
\end{smallmatrix}\right|$
be the path obtained from $P$ by appending the edge
$\left[\vcenter{\begin{xy}<.2cm,0cm>:
(2,3)="1"*+!U{_1};
"1";(1,2)*+!U{\color{red}a}**\dir{-};
"1";(4,2)="2"*+!U{_2}**\dir{-};
"2";(3,1)*+!U{\color{red}b}**\dir{-};
"2";(5,1)*+!U{\color{red}c}**\dir{-};
(6,3)*+!UL{{\color{red}p_1p_2}\cdots{\color{red}p_j}};
\end{xy}}\right]_B$
on the left if $Q_n$ has such an edge, or put
$P.\left\langle\begin{smallmatrix}\accentset{\circ}{a}\\b\\c
\end{smallmatrix}\right|=0$
otherwise.
If $p$ has a part equal to $a+b+c$, say $p_1$, then
let $P.\left\langle\begin{smallmatrix}a\\b\\c\end{smallmatrix}\right|$
be the path obtained from $P$ by appending the edge
$\left[\vcenter{\begin{xy}<.2cm,0cm>:
(1,3)*+!UR{\color{red}p_0};
(3,3)="1"*+!U{_1};
"1";(2,2)*+!U{\color{red}a}**\dir{-};
"1";(5,2)="2"*+!U{_2}**\dir{-};
"2";(4,1)*+!U{\color{red}b}**\dir{-};
"2";(6,1)*+!U{\color{red}c}**\dir{-};
(7,3)*+!UL{{\color{red}p_2p_3}\cdots{\color{red}p_j}};
\end{xy}}\right]_B$
on the left if $Q_n$ has such an edge, or put
$P.\left\langle\begin{smallmatrix}a\\b\\c\end{smallmatrix}\right|=0$
otherwise. 
Here $p_0=n+1-\sum_{i=1}^jp_i$ if this number is positive,
in which case 
$\left[\vcenter{\begin{xy}<.2cm,0cm>:
(1,3)*+!UR{\color{red}p_0};
(3,3)="1"*+!U{_1};
"1";(2,2)*+!U{\color{red}a}**\dir{-};
"1";(5,2)="2"*+!U{_2}**\dir{-};
"2";(4,1)*+!U{\color{red}b}**\dir{-};
"2";(6,1)*+!U{\color{red}c}**\dir{-};
(7,3)*+!UL{{\color{red}p_2p_3}\cdots{\color{red}p_j}};
\end{xy}}\right]_B$
is an edge of $Q_n$. 
The algebra $k\mathcal{B}^\ast$ acts on $kQ_n$ by extending
the definitions above by linearity.

The branch monoid provides a convenient language for specifying
paths in $Q_n$. Namely, we can uniquely specify any path $P$
as $p.B$ where $p$ is the destination of $P$ and $B$ is an element
of $\mathcal{B}^\ast$.
There are also natural actions of
$\mathcal{B}^\ast$ on the various forest algebras.
Although we have no use for these actions
in this article, we remark that the notation and the name {\em branch monoid}
are meant to reflect the fact that
the action of $\mathcal{B}^\ast$ on the forest algebras
can be used to build forests in the same way that 
the action of $\mathcal{B}^\ast$ on $kQ_n$ can
be used to build paths.

Omitting the inner delimiters in the product of two or more
elements of $\mathcal{B}$ to simplify notation,
we define the following classes of elements of $kQ_n$.

\begin{enumerate}\itemsep6pt
\item[(B1)]\label{b1} $\displaystyle
\left\langle\begin{smallmatrix}\accentset{\circ}{a}&d\\b&e\\c&f\end{smallmatrix}\right|
-\left\langle\begin{smallmatrix}d&\accentset{\circ}{a}\\e&b\\f&c\end{smallmatrix}\right|$
\qquad where $d+e+f\not\in\left\{b,c\right\}$
\item[(B2)]\label{b2} $\displaystyle
\left\langle\begin{smallmatrix}a&d\\b&e\\c&f\end{smallmatrix}\right|
-\left\langle\begin{smallmatrix}d&a\\e&b\\f&c\end{smallmatrix}\right|$
\qquad where $a+b+c\not\in\left\{d,e,f\right\}$ and $d+e+f\not\in\left\{a,b,c\right\}$
\item[(B3)]\label{b3} $\displaystyle
\left\langle\begin{smallmatrix}\accentset{\circ}{a}&d&g\\b&e&h\\c&f&i\end{smallmatrix}\right|
+\left\langle\begin{smallmatrix}d&g&\accentset{\circ}{a}\\e&h&b\\f&i&c\end{smallmatrix}\right|
-\left\langle\begin{smallmatrix}d&\accentset{\circ}{a}&g\\e&b&h\\f&c&i\end{smallmatrix}\right|
-\left\langle\begin{smallmatrix}g&\accentset{\circ}{a}&d\\h&b&e\\i&c&f\end{smallmatrix}\right|$
\item[] where $d+e+f=g+h+i\in\left\{b,c\right\}$
\item[(B4)]\label{b4} $\displaystyle
\left\langle\begin{smallmatrix}\accentset{\circ}{a}&d&g\\b&e&h\\c&f&i\end{smallmatrix}\right|
+\left\langle\begin{smallmatrix}g&\accentset{\circ}{a}&d\\h&b&e\\i&c&f\end{smallmatrix}\right|
-\left\langle\begin{smallmatrix}\accentset{\circ}{a}&g&d\\b&h&e\\c&i&f\end{smallmatrix}\right|
-\left\langle\begin{smallmatrix}d&g&\accentset{\circ}{a}\\e&h&b\\f&i&c\end{smallmatrix}\right|$
\item[] where 
$g+h+i\in\left\{b,c\right\}\cap\left\{d,e,f\right\}$
and $d+e+f\not\in\left\{b,c\right\}$
\item[(B5)]\label{b5} $\displaystyle
\left\langle\begin{smallmatrix}a&d&g\\b&e&h\\c&f&i\end{smallmatrix}\right|
+\left\langle\begin{smallmatrix}g&a&d\\h&b&e\\i&c&f\end{smallmatrix}\right|
-\left\langle\begin{smallmatrix}a&g&d\\b&h&e\\c&i&f\end{smallmatrix}\right|
-\left\langle\begin{smallmatrix}d&g&a\\e&h&b\\f&i&c\end{smallmatrix}\right|$
\item[] where $a+b+c=d+e+f\in\left\{g,h,i\right\}$
\item[(B6)]\label{b6} $\displaystyle
\left\langle\begin{smallmatrix}a&d&g\\b&e&h\\c&f&i\end{smallmatrix}\right|
+\left\langle\begin{smallmatrix}g&a&d\\h&b&e\\i&c&f\end{smallmatrix}\right|
-\left\langle\begin{smallmatrix}a&g&d\\b&h&e\\c&i&f\end{smallmatrix}\right|
-\left\langle\begin{smallmatrix}d&g&a\\e&h&b\\f&i&c\end{smallmatrix}\right|$
\item[] where $g+h+i\in\left\{a,b,c\right\}\cap\left\{d,e,f\right\}$
and $a+b+c\not\in\left\{d,e,f\right\}$ and $d+e+f\not\in\left\{a,b,c\right\}$
\end{enumerate}
It is easy to check that $p.B\in\iota^{-1}\left(\ker\Delta\right)$
for all vertices $p$ of $kQ_n$ and all elements $B\in k\mathcal{B}^\ast$
of the forms \hyperref[b1]{(B1)}--\hyperref[b6]{(B6)}.

Recall that if $X$ is any labeled forest, then there
exists a linear combination of strongly right
aligned labeled forests which is congruent to $X$ modulo $\ker\Delta$
by \autoref{MakeRightAligned}.
Such a linear combination is called a {\em right aligned rendering} of $X$.
Similarly, it follows from the proof of \autoref{MakeRightAligned}
that if $X$ is any unlabeled forest, then the same procedure can be applied
to $X$ resulting in a linear combination of strongly right aligned
unlabeled forests which is congruent to $X$ modulo $\ker\pi$.
Such a linear combination is also called a {\em right aligned rendering} of $X$.
A right aligned rendering of a forest is not uniquely determined in general.

For unlabeled trees $X,Y,Z$ we denote
$\vcenter{\begin{xy}<.2cm,0cm>:
(4,3)="1";
"1";(2,2)="2"**\dir{-};
"2";(1,1)*+!U{X}**\dir{-};
"2";(3,1)*+!U{Y}**\dir{-};
"1";(5,2)*+!U{Z}**\dir{-};
\end{xy}}
+\vcenter{\begin{xy}<.2cm,0cm>:
(4,3)="1";
"1";(2,2)="2"**\dir{-};
"2";(1,1)*+!U{Z}**\dir{-};
"2";(3,1)*+!U{X}**\dir{-};
"1";(5,2)*+!U{Y}**\dir{-};
\end{xy}}
+\vcenter{\begin{xy}<.2cm,0cm>:
(4,3)="1";
"1";(2,2)="2"**\dir{-};
"2";(1,1)*+!U{Y}**\dir{-};
"2";(3,1)*+!U{Z}**\dir{-};
"1";(5,2)*+!U{X}**\dir{-};
\end{xy}}$ by $\mathsf{j}\left(X,Y,Z\right)$
to simplify the following definition.
We define the families
\begin{enumerate}
\item[(J1)]\label{j1}
$\left[\vcenter{\begin{xy}<.2cm,0cm>:
(4,3)="1";
"1";(2,2)="2"**\dir{-};
"2";(1,1)*+!U{\color{red}z}**\dir{-};
"2";(3,1)*+!U{X}**\dir{-};
"1";(5,2)*+!U{Y}**\dir{-};
(6,3)*+!UL{{\color{red}q_1q_2}\cdots{\color{red}q_j}};
\end{xy}}\right]_B
-\left[\vcenter{\begin{xy}<.2cm,0cm>:
(4,3)="1";
"1";(2,2)="2"**\dir{-};
"2";(1,1)*+!U{\color{red}z}**\dir{-};
"2";(3,1)*+!U{Y}**\dir{-};
"1";(5,2)*+!U{X}**\dir{-};
(6,3)*+!UL{{\color{red}q_1q_2}\cdots{\color{red}q_j}};
\end{xy}}\right]_B
-2\left[\vcenter{\begin{xy}<.2cm,0cm>:
(2,3)="1";
"1";(1,2)*+!U{\color{red}z}**\dir{-};
"1";(4,2)="2"**\dir{-};
"2";(3,1)*+!U{X}**\dir{-};
"2";(5,1)*+!U{Y}**\dir{-};
(6,3)*+!UL{{\color{red}q_1q_2}\cdots{\color{red}q_j}};
\end{xy}}\right]_B$
where
$X=\vcenter{\begin{xy}<.2cm,0cm>:
(2,2)="1";
"1";(1,1)*+!U{\color{red}a}**\dir{-};
"1";(3,1)*+!U{\color{red}b}**\dir{-};
\end{xy}}$
and $Y$ is either
$\vcenter{\begin{xy}<.2cm,0cm>:
(2,2)="1";
"1";(1,1)*+!U{\color{red}c}**\dir{-};
"1";(3,1)*+!U{\color{red}d}**\dir{-};
\end{xy}}$ or
$\vcenter{\begin{xy}<.2cm,0cm>:
(2,4)="1";
"1";(1,3)*+!U{\color{red}c}**\dir{-};
"1";(4,3)="2"**\dir{-};
"2";(3,2)*+!U{\color{red}d}**\dir{-};
"2";(6,2)="3"**\dir{-};
"3";(5,1)*+!U{\color{red}e}**\dir{-};
"3";(7,1)*+!U{\color{red}f}**\dir{-};
\end{xy}}$
\item[(J2)]\label{j2}
$\left[\vcenter{\begin{xy}<.2cm,0cm>:
(2,2)="1";
"1";(1,1)*+!U{\color{red}z}**\dir{-};
"1";(3,1)*+!U{J}**\dir{-};
(4,2)*+!UL{{\color{red}q_1q_2}\cdots{\color{red}q_j}};
\end{xy}}\right]_B$
or
$\left[\vcenter{\begin{xy}<.2cm,0cm>:
(1,2)*+!UR{\color{red}q_0};
(3,2)="1";
"1";(2,1)*+!U{\color{red}z}**\dir{-};
"1";(4,1)*+!U{J}**\dir{-};
(5,2)*+!UL{{\color{red}q_1q_2}\cdots{\color{red}q_j}};
\end{xy}}\right]_B$
where $J$ is either
$\mathsf{j}\left({\color{red}a},{\color{red}b},
\vcenter{\begin{xy}<.2cm,0cm>:
(2,2)="1";
"1";(1,1)*+!U{\color{red}c}**\dir{-};
"1";(3,1)*+!U{\color{red}d}**\dir{-};
\end{xy}}\right)$
or
$\mathsf{j}\left({\color{red}a},
\vcenter{\begin{xy}<.2cm,0cm>:
(2,2)="1";
"1";(1,1)*+!U{\color{red}b}**\dir{-};
"1";(3,1)*+!U{\color{red}c}**\dir{-};
\end{xy}},
\vcenter{\begin{xy}<.2cm,0cm>:
(2,3)="1";
"1";(1,2)*+!U{\color{red}d}**\dir{-};
"1";(4,2)="2"**\dir{-};
"2";(3,1)*+!U{\color{red}e}**\dir{-};
"2";(5,1)*+!U{\color{red}f}**\dir{-};
\end{xy}}\right)$
\item[(J3)]\label{j3} $\left[{\color{red}q_0}\;J\;
{\color{red}q_1q_2}\cdots{\color{red}q_j}\right]_B$
where $J$ is either
$\mathsf{j}\left({\color{red}a},{\color{red}b},
\vcenter{\begin{xy}<.2cm,0cm>:
(2,3)="1";
"1";(1,2)*+!U{\color{red}c}**\dir{-};
"1";(4,2)="2"**\dir{-};
"2";(3,1)*+!U{\color{red}d}**\dir{-};
"2";(5,1)*+!U{\color{red}e}**\dir{-};
\end{xy}}\right)$
or
$\mathsf{j}\left({\color{red}a},
\vcenter{\begin{xy}<.2cm,0cm>:
(2,3)="1";
"1";(1,2)*+!U{\color{red}b}**\dir{-};
"1";(4,2)="2"**\dir{-};
"2";(3,1)*+!U{\color{red}c}**\dir{-};
"2";(5,1)*+!U{\color{red}d}**\dir{-};
\end{xy}},
\vcenter{\begin{xy}<.2cm,0cm>:
(2,3)="1";
"1";(1,2)*+!U{\color{red}e}**\dir{-};
"1";(4,2)="2"**\dir{-};
"2";(3,1)*+!U{\color{red}f}**\dir{-};
"2";(5,1)*+!U{\color{red}g}**\dir{-};
\end{xy}}
\right)$
\end{enumerate}
where $a,b,c,d,e,f,g,z,q_0,q_1,\ldots,q_j$ are any natural numbers.
In addition the parameters $a,b,c,d,e,f,g,z,q_0,q_1,\ldots,q_j$
should be chosen in each case 
\hyperref[j1]{(J1)}, \hyperref[j2]{(J2)}, or \hyperref[j3]{(J3)} 
such that the resulting element 
has value $n+1$ and has a non-trivial
right aligned rendering.
Observe that any element of the form
\hyperref[j1]{(J1)}, \hyperref[j3]{(J3)}, or \hyperref[j3]{(J3)}
is homogeneous of length four or six
and that any preimage under $\E$
of such an element lies in $\ker\Delta$.

\begin{conjecture}\label{MainConjecture} The elements
$p.B$ for all vertices $p$ of $kQ_n$ and all
$B\in k\mathcal{B}^\ast$ of the forms
\hyperref[b1]{(B1)}--\hyperref[b6]{(B6)}
together with elements of $kQ_n$ mapping under
$\E\circ\iota$ to right aligned renderings of all elements
of the forms \hyperref[j1]{(J1)}--\hyperref[j3]{(J3)}
generate $\iota^{-1}\left(\ker\Delta\right)$ as an ideal.
\end{conjecture}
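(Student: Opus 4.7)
The plan is to prove the conjecture by establishing the two inclusions $J \subseteq \iota^{-1}(\ker\Delta)$ and $\iota^{-1}(\ker\Delta) \subseteq J$, where $J$ denotes the ideal generated by the proposed relations. The forward inclusion is already indicated by the author for \hyperref[b1]{(B1)}--\hyperref[b6]{(B6)}. For \hyperref[j1]{(J1)}--\hyperref[j3]{(J3)} I would observe that $\mathsf{j}(X,Y,Z)$ is a full Jacobi sum, so its image under $\E\circ\iota\circ\pi$ vanishes in the free Lie algebra on $\mathbb{N}$; combined with \autoref{DeltaBStreet} this forces any right aligned rendering of such an element into $\ker\Delta$.

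For the converse inclusion, my plan is to build a rewriting procedure that reduces every path in $kQ_n$ modulo $J$ to a canonical path of the form $\p(X)$ with $X$ strongly right aligned. First I would use \hyperref[b1]{(B1)}--\hyperref[b6]{(B6)} to bring any path $P$ into the order prescribed by iterating the primary factorization of $\iota(P)$ under the total order $\prec_X$ of \autoref{TotalSection}. The key claim here is that each of the six families (B1)--(B6) realizes, at the level of two consecutive appended edges, exactly one of the local moves \hyperref[MoveOne]{(P1)}--\hyperref[MoveThree]{(P3)} defining the equivalence $\sim$ of \autoref{IotaProduct}; a finite case check against the positional data of the two edges involved establishes that these relations locally implement $\sim$. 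Second, I would use \hyperref[j1]{(J1)}--\hyperref[j3]{(J3)} to drive the underlying forest into strongly right aligned form using the inductive scheme of \autoref{OddLemma}, \autoref{EvenLemma}, and \autoref{StronglyAligned}. The three families are specifically designed to correspond with the three positional cases of the length-three tree of \autoref{MakeRightAligned}: leftmost on the spine of the first tree, at the root of a non-first tree, or strictly interior.

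Once this reduction procedure is in place, every path becomes congruent modulo $J$ to the canonical path $\p(\F(\E(\iota(P))))$ of \autoref{TotalSection}, and \autoref{FMinimum} provides the termination bookkeeping. Since $kQ_n/\iota^{-1}(\ker\Delta) \cong \Sigma(W_n)^{\mathsf{op}}$ has dimension $2^n$ by \autoref{ExtQuiver}, the canonical paths cannot span a larger space than $kQ_n/J$, so combining the surjection $kQ_n/J \twoheadrightarrow kQ_n/\iota^{-1}(\ker\Delta)$ with the bound on canonical paths will pinch the two ideals together.

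The principal obstacle will be confluence and termination of the rewriting system. The (J)-relations are four-term identities with nontrivial positional bookkeeping on the leftmost spine, and the choice of the total order $\prec_X$ in \autoref{TotalSection} leaves genuine freedom when several subtrees share the same squash; one must check that any two reduction paths meet. I expect to handle this by a Diamond Lemma argument: list the critical pairs where either two (B)-moves, two (J)-moves, or one of each apply to the same configuration, and verify that each such pair has a common descendant in $J$. The worked examples $\Sigma(W_6)\cong kQ_6/(ac-bd)$ and the six relations listed for $\Sigma(W_7)$ will serve as sanity checks; if the conjecture fails, this is where a missing family of relations is most likely to surface.
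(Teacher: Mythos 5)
You should note first that the paper does not prove this statement at all: it is stated as \autoref{MainConjecture} and supported only by computer verification (\textsf{GAP}) for $n\le 16$, so there is no proof in the paper to compare your proposal against. Your text is a program for a proof rather than a proof, and the steps you defer are precisely the open content of the conjecture, so as it stands it does not settle anything.

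Concretely, there are three gaps. First, the claim that (B1)--(B6) ``realize, at the level of two consecutive appended edges, exactly one of the local moves (P1)--(P3)'' and that (J1)--(J3) implement the rewriting of \autoref{OddLemma}, \autoref{EvenLemma}, and \autoref{StronglyAligned} \emph{inside} $kQ_n$ is asserted but never carried out; the (J)-elements live in the forest algebra, the conjecture only supplies path elements mapping under $\E\circ\iota$ to right aligned renderings of them, and showing that every reduction step used in \autoref{MakeRightAligned} can be lifted to a relation already in the ideal $J$ is exactly the hard part. Second, the closing dimension ``pinch'' is circular: \autoref{MakeRightAligned} reduces a forest to a \emph{linear combination} of strongly right aligned forests, not to a single canonical path, so you have no a priori bound on the dimension of the span of your canonical forms in $kQ_n/J$; to get $\dim kQ_n/J\le 2^n$ you would need to know that every element of $\iota^{-1}\left(\ker\Delta\right)$ already reduces to zero modulo $J$, which is the conjecture itself. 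Third, termination and confluence are delegated to an unexecuted Diamond Lemma analysis of critical pairs, and the examples $\Sigma\left(W_6\right)$ and $\Sigma\left(W_7\right)$ are sanity checks, not evidence of completeness of the relation families. So your plan is a reasonable strategy, and the forward inclusion $J\subseteq\iota^{-1}\left(\ker\Delta\right)$ is indeed routine (the paper makes the same observation), but the reverse inclusion remains open both in the paper and in your proposal.
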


We have verified \autoref{MainConjecture} for all $n\le 16$
through computer calculation using the \textsf{GAP} system \cite{gap}.

\bibliographystyle{plain}
\bibliography{artikel}
\end{document}